\documentclass[11pt,leqno]{article} 

\usepackage{lmodern}
\providecommand{\keywords}[1]{\textbf{\small{Key words:}} #1}
\providecommand{\AMS}[1]{\textbf{\small{AMS subject classifications:}} #1}
\usepackage[utf8]{inputenc} 
\usepackage[T1]{fontenc}    
\usepackage{hyperref}       
\usepackage{url}            
\usepackage[english]{babel}
\usepackage{amsfonts}       
\usepackage{graphicx}

\usepackage[width=16cm, top=2.5cm]{geometry}
\setlength{\parindent}{10pt}

\graphicspath{{figures/}}
\usepackage{amsthm} 
\usepackage{float} 
\usepackage{physics} 
\usepackage{caption}
\usepackage{subcaption}

\newcommand{\ABS}[2]{[#1]_{{#2}_{_-}}^{{#2}_{_+}}}

\def\calO{\mathcal{O}}
\def\calM{\mathcal{M}}
\def\calR{\mathcal{R}}
\def\tcalR{\widetilde{\mathcal{R}}}

\def\N{\mathbb{N}}
\def\R{\mathbb{R}}

\def\D{\mathrm{d}}
\def\pp{p_{_+}}
\def\pn{p_{_-}}
\def\Pp{]_{\pn}^{\pp}}
\def\rmc{\mathrm{c}}
\def\rmi{\mathrm{i}}
\def\rme{\mathrm{e}}
\def\tu{{\tilde u}}
\def\tA{\tilde A}
\def\eps{\varepsilon}
\def\ogamma{{\overline{\gamma}}}
\def\odelta{{\overline{\delta}}}

\def\w{w}
\def\u{\textbf{u}}
\def\cA{\Gamma_3}
\def\cB{\Gamma_2}
\def\tcA{\widetilde{\Gamma}_3}
\def\tcB{\widetilde{\Gamma}_2}
\def\bT{\textbf{T}}
\def\c{R}
\def\s{S}

\DeclareMathOperator{\sgn}{sgn}

\theoremstyle{plain}
\newtheorem{theorem}{Theorem}[section] 

\newtheorem{proposition}[theorem]{Proposition} 
\newtheorem{corollary}[theorem]{Corollary} 
\newtheorem{lemma}[theorem]{Lemma} 

\theoremstyle{definition}
\newtheorem{remark}[theorem]{Remark} 
\newtheorem{hypothesis}[theorem]{Hypothesis} 

\numberwithin{equation}{section} 

\title{Lyapunov coefficients for Hopf bifurcations in systems with piecewise smooth nonlinearity}

\date{January 19, 2023} 
\author{Miriam Steinherr Zazo\thanks{University of Bremen, Germany, Department 3 -- Mathematics, m.steinherr@uni-bremen.de}  \and Jens D.M. Rademacher
	\thanks{University of Hamburg, Germany, Department of Mathematics, jens.rademacher@uni-hamburg.de}}

\hypersetup{
pdftitle={Lyapunov coefficients for Hopf bifurcations in systems with piecewise smooth nonlinearity},
pdfsubject={},
pdfauthor={Miriam Steinherr Zazo, Jens D.~M.~Rademacher},
pdfkeywords={First keyword, Second keyword, More},
}

\begin{document}
\maketitle

\begin{abstract}
	Motivated by models that arise in controlled ship maneuvering, we analyze Hopf bifurcations in systems with piecewise smooth nonlinear part. In particular, we derive explicit formulas for the generalization of the first Lyapunov coefficient to this setting. This generically determines the direction of branching (super- versus subcriticality), but in general this differs from any fixed smoothing of the vector field. We focus on nonsmooth nonlinearities of the form $u_i|u_j|$, but our results are formulated in broader generality for systems in any dimension with piecewise smooth nonlinear part. In addition, we discuss some codimension-one degeneracies and apply the results to a model of a shimmying wheel.
\end{abstract}

\keywords{	{\small degenerate Andronov-Hopf bifurcation, nonsmooth systems, normal form,\\
		\hspace*{2.6cm} invariant manifolds}
}

\medskip
\AMS{ {\small
	34C23,  
	37G15,  
	74H60,   
	70K42   
}}

\section{Introduction}\label{s:intro}
The identification of characteristic parameters for bifurcations is one of the key goals of bifurcation theory. For the Andronov-Hopf bifurcation from equilibria to periodic orbits, the most relevant characteristic parameter is the first Lyapunov coefficient, $\sigma_s\in\R$. If it is nonzero, it determines the scaling and the direction of bifurcation relative to real part of the critical eigenvalues of the linearization at the equilibrium. It is well known that the truncated normal form of the radial component on the center manifold reads
\begin{equation}
	\dot{u} = \mu u +\sigma_s u^3,
	\label{Npitchfork}
\end{equation}
with parameter $\mu\in\R$. {An excellent exposition of Andronov-Hopf bifurcation theory and applications can be found in \cite{Marsden76}, see also \cite{Guckenheimer, Kuznetsov}.}
In Figure~\ref{f:Hopf}(a,b) we plot the associated pitchfork bifurcation for different signs of $\sigma_s$.
\begin{figure}
	[bt]
	\begin{tabular}{cccc}
		\includegraphics[width= 0.15\textwidth]{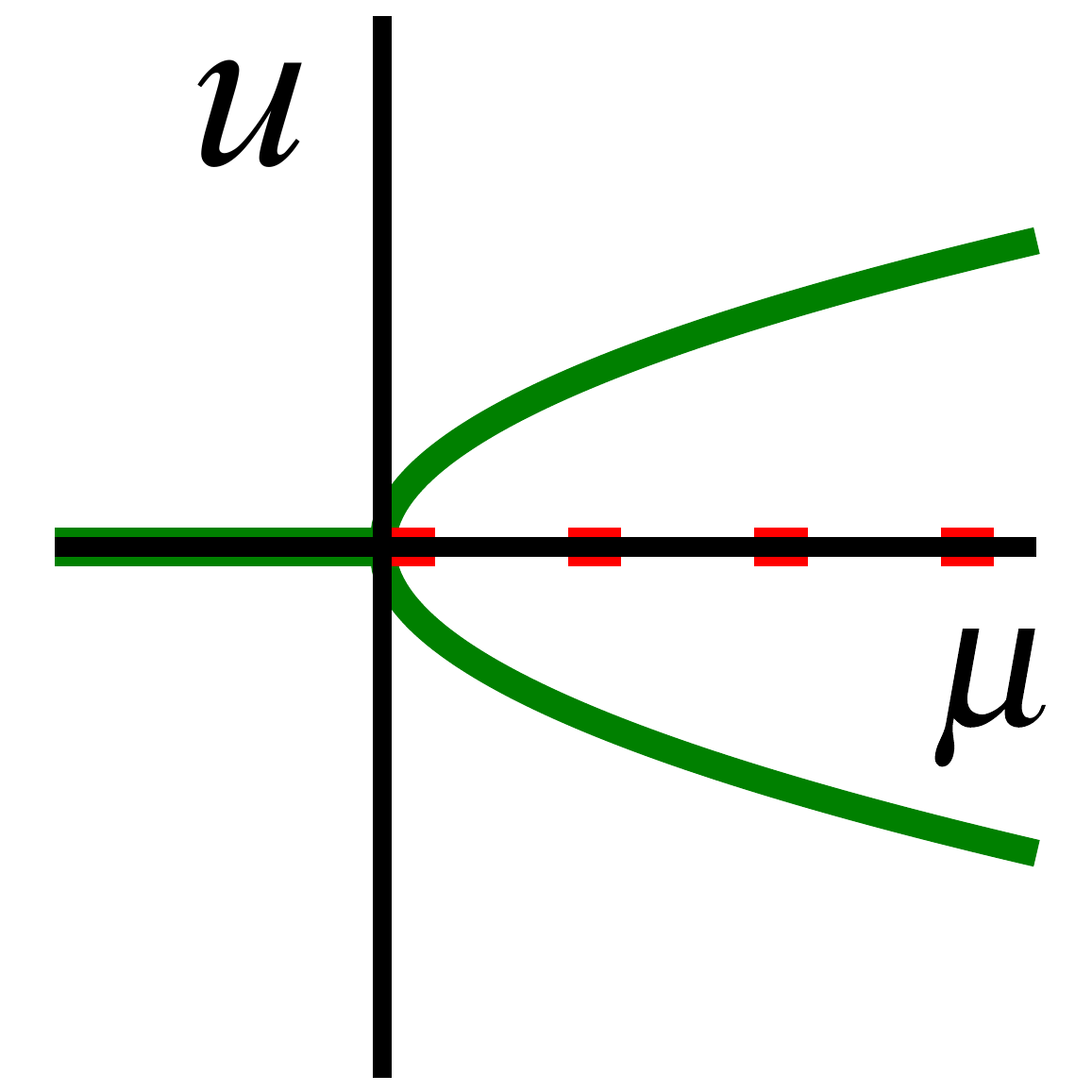}\hspace*{1cm}
		&\includegraphics[width= 0.15\textwidth]{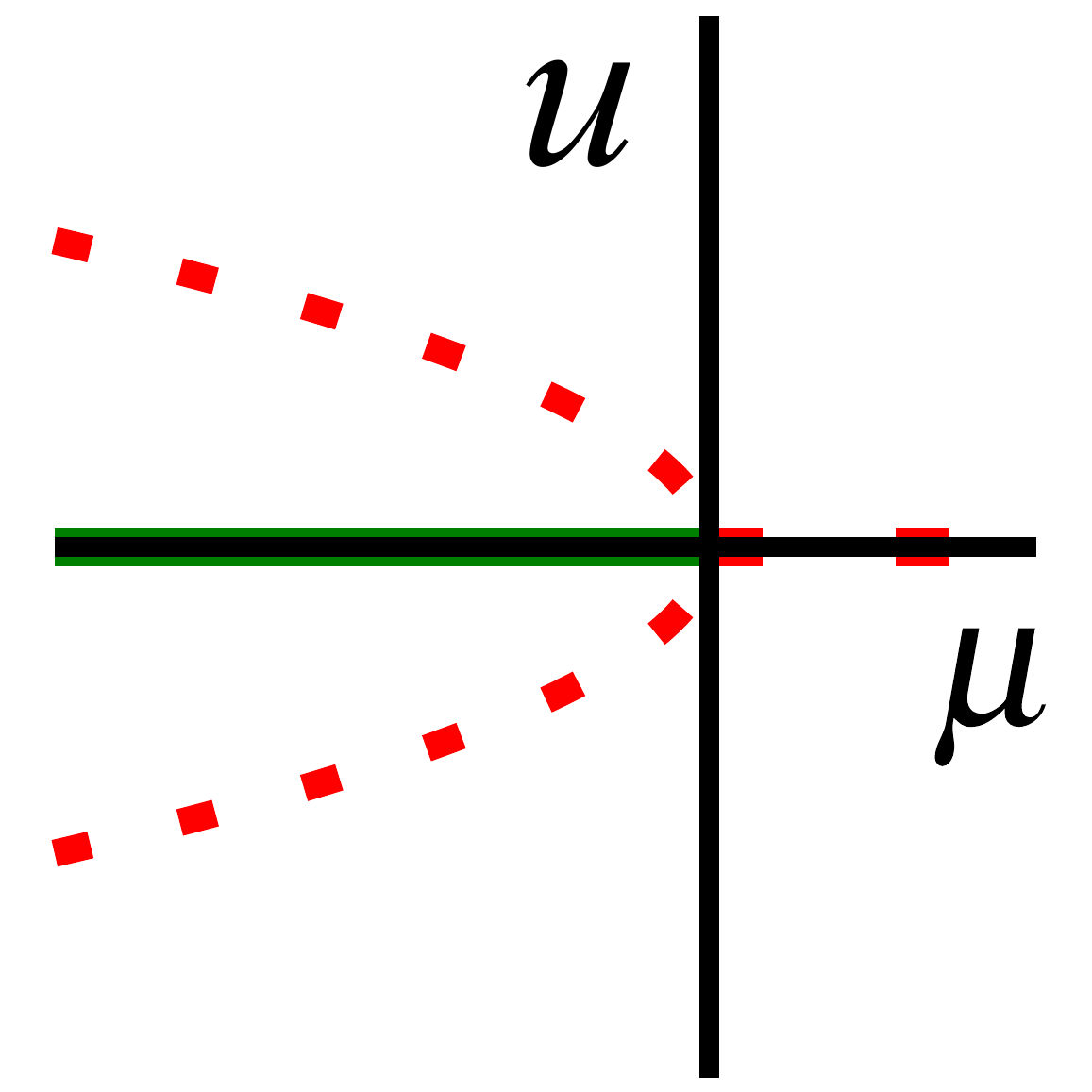}\hspace*{1.5cm}
		&\includegraphics[width= 0.15\textwidth]{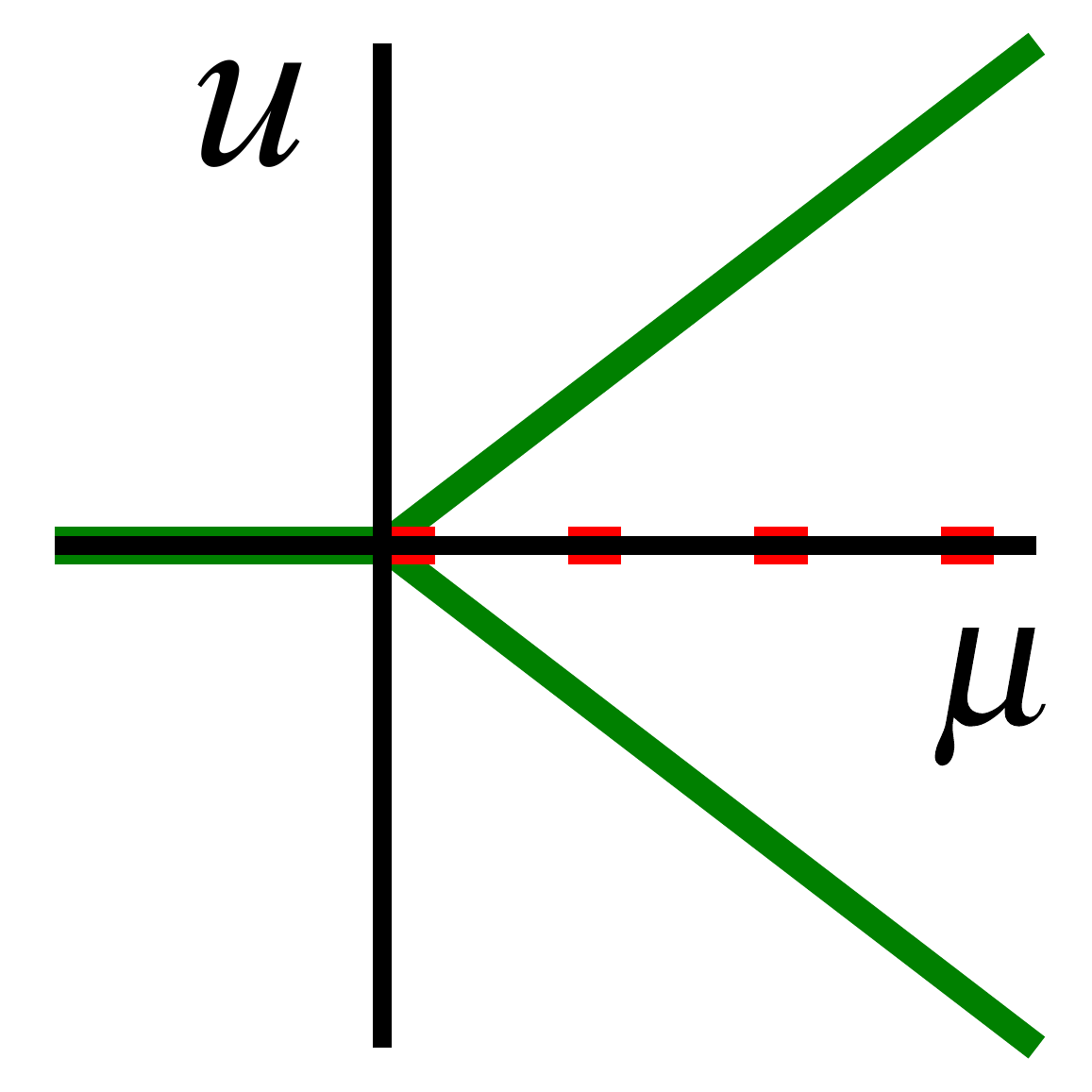}\hspace*{1cm}
		& \includegraphics[width= 0.15\textwidth]{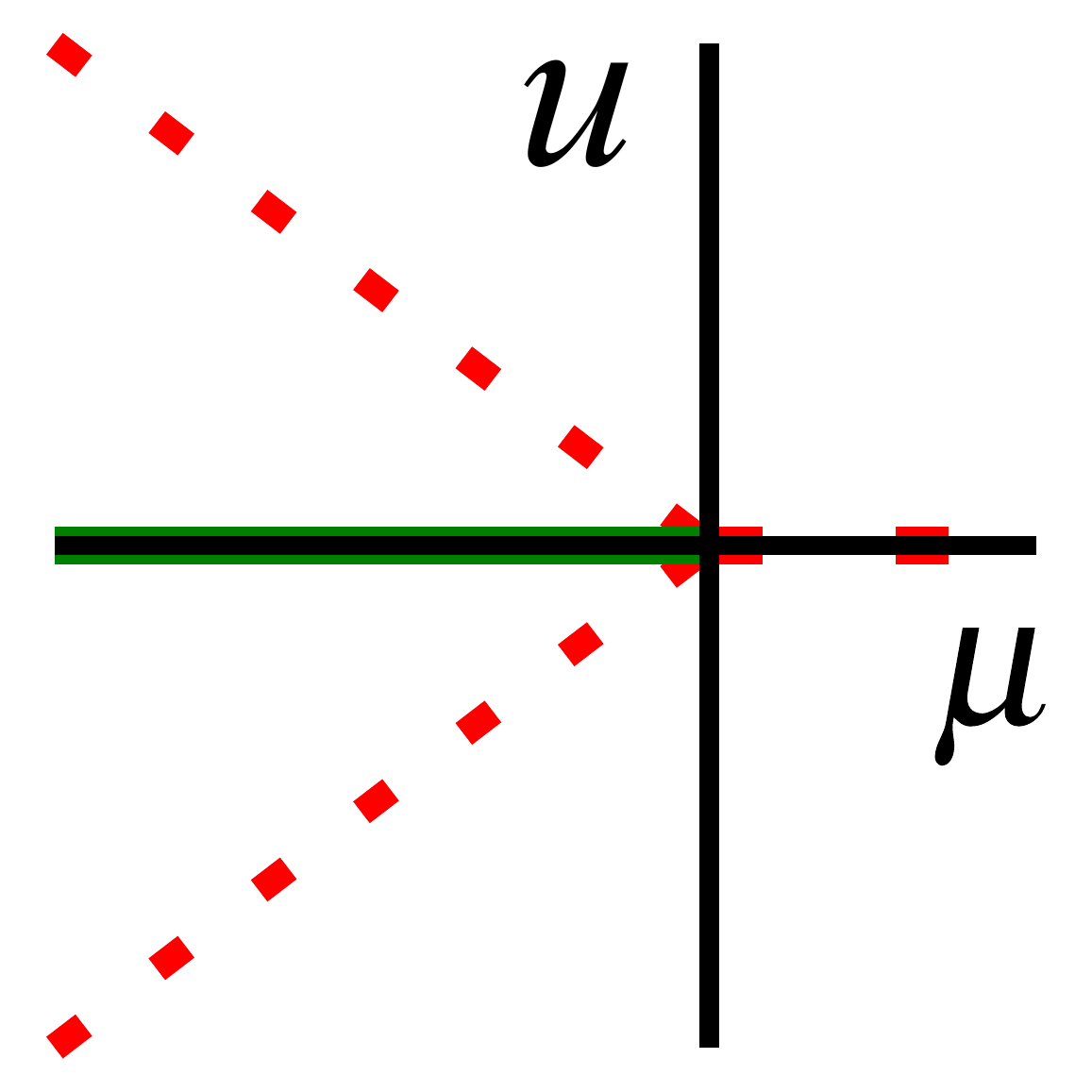}\\
		(a) & (b) &(c) &(d)
	\end{tabular}
	\caption{(a) Supercritical, $\sigma_s=-1$, and (b) subcritical, $\sigma_s=1$, pitchfork bifurcation of \eqref{Npitchfork} with stable (green) and unstable (red dashed) equilibria. In (c,d) we plot the analogous `degenerate' pitchforks for the nonsmooth case \eqref{Dpitchfork}, with $\sigma_{_\#}=-1 ,1$, respectively. }
	\label{f:Hopf}
\end{figure}
Generically, $\sigma_s\neq0$ and the bifurcating periodic orbits either coexist with the more unstable equilibrium, the supercritical case $\sigma_s<0$, or with the more stable equilibrium, the subcritical case $\sigma_s>0$. This distinction is relevant for applications since the transition induced by the bifurcation is a `soft' first order phase transition in the supercritical case, while it is `hard' in the subcritical one. Indeed, the transition is `safe' in a control sense in the supercritical case and `unsafe' in the subcritical one, where the local information near the equilibrium is insufficient to determine the dynamics near the unstable equilibrium.

Therefore, a formula for the first Lyapunov coefficient is important from a theoretical as well as applied viewpoint. In generic smooth bifurcation theory, such a formula is well known in terms of quantities derived from the Taylor expansion to order three in the equilibrium at bifurcation, {e.g.,} \cite{Kuznetsov}. However, this cannot be applied in nonsmooth systems. Nonsmooth terms appear in models for numerous phenomena and their study has gained momentum in the past decades,
as illustrated by the enormous amount of literature, see \cite{BookAlan,KuepperHoshamWeiss2013,NonsmoothSurvey2012,TianThesis} and the references therein to hint at some; below we discuss literature that relates to our situation. 

In this paper, we provide explicit formulas for the analogues of the first Lyapunov coefficient in systems with regular linear 
term and Lipschitz continuous, but only piecewise smooth nonlinear terms, with jumps in derivatives across switching surfaces. We also discuss codimension-one degeneracies and the second Lyapunov coefficient. To the best of our knowledge, this analysis is new. 
Such systems can be viewed as mildly nonsmooth, but occur in various models, e.g., for ship maneuvering 
\cite{InitialPaper,FossenHandbook,ToxopeusPaper}, which motivated the present study. Here the hydrodynamic drag force 
at high-enough Reynolds number is a nonsmooth function of the velocity $u$. More specifically, a dimensional and 
symmetry analysis with $\rho$ being the density of water, $C_D$ the drag coefficient and $A$ the effective drag area, yields  
$$F_D = -\frac{1}{2}\rho C_D A u\abs{u}.$$
Effective hydrodynamic forces among velocity components $u_i, u_j$, $1\leq i,j\leq n$, with $n$ depending on the model type, are often likewise modeled by second order modulus terms: $u_i\abs{u_j}$, cf.\ \cite{FossenHandbook}.
For illustration, let us consider the corresponding nonsmooth version of \eqref{Npitchfork},
\begin{equation}
	\dot{u} = \mu u +\sigma_{_\#} u\abs{u},
	\label{Dpitchfork}
\end{equation}
where the nonlinear term has the odd symmetry of the cubic term in \eqref{Npitchfork} and is once continuously differentiable, but not twice. We note that in higher dimensions, the mixed nonlinear terms $u_i\abs{u_j}$ for $i\neq j$, are differentiable at the origin only.

In Figure \ref{f:Hopf}(c,d) we plot the resulting bifurcation diagrams. Compared with \eqref{Npitchfork},
the amplitude scaling changes from $\sqrt{\mu}$ to $\mu$ and the rate of convergence to the bifurcating state changes from $-2\mu$ to $-\mu$.
Indeed, in this scalar equation case, the singular coordinate change $u=\tilde u^2$ transforms \eqref{Dpitchfork} into \eqref{Npitchfork} up to time rescaling by $2$. However, there is no such coordinate change for general systems of equations with nonsmooth terms of this kind.

More generally, we consider $n$-dimensional systems of ordinary differential equations (ODE) of the form
\begin{equation}\label{e:abstract0}
	\dot \u= A(\mu)\u+G(\u),
\end{equation}
with matrix $A(\mu)$ depending on a parameter $\mu\in\R$, and Lipschitz continuous nonlinear $G(\u)=\calO(|\u|^2)$. We shall assume the nonlinearity is smooth away from the smooth hypersurfaces $H_j$, $j=1,\ldots,n_H$, the \emph{switching surfaces}, which intersect pairwise transversally at the equilibrium point $\u_*=0$. We assume {further that} the smoothness of $G$ extends to the boundary within each component of the complement of $\cup_{j=1}^{n_H} H_j\subset \R^n$.

The bifurcation of periodic orbits is ---as in the smooth case--- induced by the spectral structure of $A(\mu)$, which is (unless stated otherwise) hyperbolic except for a simple complex conjugate pair that crosses the imaginary axis away from the origin as $\mu$ crosses zero.

\medskip
Our main results may be summarized informally as follows.

We infer from the result in \cite{IntegralManifold} that the center manifold of the smooth case is replaced by a Lipschitz invariant manifold (Proposition \ref{prop:inv_man}), and directly prove that a unique branch of periodic orbits emerges at the bifurcation (Theorem \ref{t_per_orb}). Moreover, we prove that the quadratic terms of $G$ are of generalized second order modulus type if $G$ is piecewise $C^2$ smooth (Theorem \ref{t:abstractnormal}). Here the absolute value in the above terms is replaced by  
\begin{align}
	[u\Pp = 
	\begin{cases}
		\pp u, & u\geq 0, \\
		\pn u, & u<0,
	\end{cases}
	\label{gen_abs_val}
\end{align}
where $\pn,\pp\in\R$ are general different slopes left and right of the origin, respectively. 

This already allows to express the first Lyapunov coefficient in an integral form, but its explicit evaluation is somewhat involved, so that we defer it to \S\ref{Gen_linear_part}. Instead, we start with the simpler case when $A$ is in block-diagonal form, in normal form on the center eigenspace, and of pure second order modulus form ($\pp=-\pn=1$). For the planar situation, we derive a normal form of the bifurcation equation with rather compact explicit coefficients using averaging theory (Theorem \ref{t_averaging}). Beyond the first Lyapunov coefficient $\sigma_{_\#}$, this includes the second Lyapunov coefficient $\sigma_2$, which becomes relevant when $\sigma_{_\#}=0$, and which explains how the smooth quadratic and cubic terms interact with the nonsmooth ones in determining the bifurcation's criticality. 

For refinement and generalization, and to provide direct self-contained proofs, we proceed using the Lyapunov-Schmidt reduction for the boundary value problem of periodic solutions, and refer to this as the `direct method' (\S\ref{s:direct}). We also include a discussion of the Bautin-type bifurcation in this setting, when $\sigma_{_\#}=0$. Concluding the planar case, we {generalize} the results to arbitrary $\pp, \pn$ (\S\ref{s:appplanar}). 

These results of the planar case readily generalize to higher dimensions, $n>2$, with additional hyperbolic directions (\S\ref{s:3D}, \ref{s:nD}). In addition, we apply the direct method to the situation with an additional nonhyperbolic direction {in the sense that the linearization at bifurcation has three eigenvalues on the imaginary axis: one zero eigenvalue and a complex conjugate pair.}
{In this case we show that either no periodic solutions bifurcate or two curves bifurcate (Corollaries \ref{c:3D}, \ref{c:nD}), depending on the sign of a particular combination of coefficients of the system that is assumed to be nonzero.}

{Concluding the summary of main results,} in \S\ref{Gen_linear_part}, we discuss the modifications in case the linear part is not in normal form. 

\medskip
For illustration, we consider the planar case with linear part in normal form, so \eqref{e:abstract0} with $\u=(v,\w)$ reads
\begin{equation}\label{e:planar0}
	\begin{cases}
		\dot{v} = \mu v - \omega \w + f\left( v, \w \right), \\
		\dot{\w} = \omega v + \mu \w + g\left( v, \w \right),
	\end{cases}
\end{equation}
and the case of purely quadratic nonlinearity with second order modulus terms gives
\begin{equation}\label{e:quadnonlin}
	\begin{aligned}
		f\left( v, \w \right) &= a_{11}v\abs{v} + a_{12}v\abs{\w} + a_{21}\w\abs{v} + a_{22}\w\abs{\w},\\
		g\left( v, \w \right) &= b_{11}v\abs{v} + b_{12}v\abs{\w} + b_{21}\w\abs{v} + b_{22}\w\abs{\w},
	\end{aligned}
\end{equation}
where $a_{ij}$, $b_{ij}$, $1\leq i,j\leq 2$, are real parameters. In this simplest situation, our new first Lyapunov coefficient reads
\begin{equation}\label{sigma1}
	\sigma_{_\#} = 2a_{11}+a_{12}+b_{21}+2b_{22}
\end{equation}
and we plot samples of bifurcation diagrams in Figure~\ref{f:auto} computed by numerical continuation with the software \texttt{Auto} \cite{auto}. {For these} we used numerically computed Jacobians and avoided evaluation too close to the nonsmooth point by choosing suitable step-sizes and accuracy.

\begin{figure}
	\centering
	\begin{tabular}{cc}
		\includegraphics[width= 0.4\linewidth]{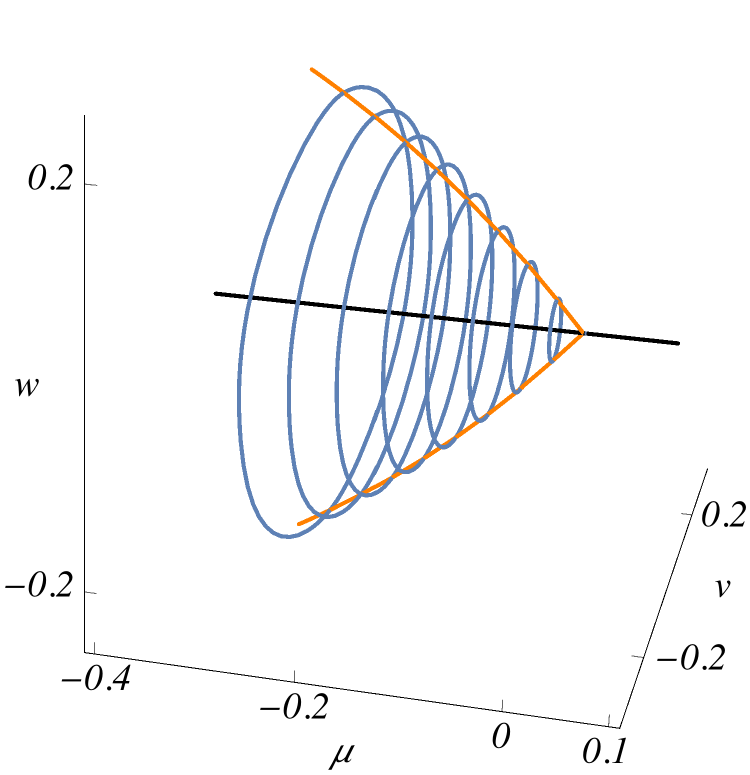} &
		\includegraphics[width= 0.4\linewidth]{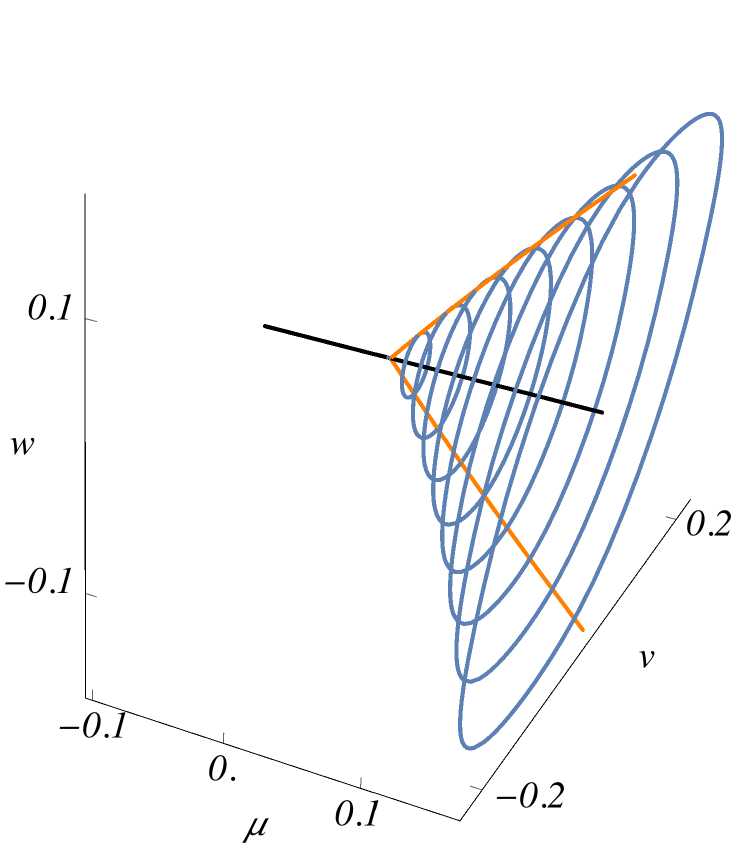}\\
		(a) & (b)
	\end{tabular}
	\caption{Plotted are bifurcation diagrams of \eqref{e:planar0} with \eqref{e:quadnonlin} computed by numerical continuation. Blue curves are periodic orbits, black lines the equilibrium at the origin, and orange the extrema in $w$, showing the nonsmooth bifurcation. In (a) we use $a_{ij}=b_{ij}=1$ except
		$b_{21}=-1$, so that $\sigma_{_\#}=4>0$ (subcritical). In (b) $b_{22}=-3$, so that $\sigma_{_\#}=-4$ (supercritical).}
	\label{f:auto}
\end{figure}

In comparison, the classical first Lyapunov coefficient for purely cubic nonlinearity, i.e., $|\cdot |$ replaced by $(\cdot)^2$, reads
\[
\sigma_s = 3a_{11}+a_{12}+b_{21}+3b_{22}.
\]
The leading order expansion of the radius $r_{_\#}$ and $r_s$ of bifurcating periodic solutions in these cases read, respectively, 
\[
r_{_\#}(\mu) = -\frac{3\pi}{2\sigma_{_\#}}\mu + \mathcal{O}\left(\mu^2\right), \qquad
r_s(\mu) = 2\sqrt{-\frac{2}{\sigma_s}\mu} + \mathcal{O}\left(\mu\right).
\]
We show that for $\sigma_{_\#}=0$ the bifurcation takes the form 
\[
r_0=\sqrt{-\frac{2\pi\omega}{\sigma_2}\mu}+\mathcal{O}\left(\mu\right),
\]
analogous to the smooth case, but with second Lyapunov coefficient in this setting given by
\begin{equation}\label{sigma2}
	\begin{aligned}
		{\sigma}_2 &= \frac{1}{9}\Big[
		11( 2 a_{11}a_{22} - a_{12}b_{11} + a_{22}b_{21} - 2 b_{11}b_{22} ) 
		+ 13 a_{11}b_{12} - 13 a_{21}b_{22} - 2 a_{12}a_{21} + 2 b_{12}b_{21} \Big] \\
		&\quad+ \frac{\pi}{4}( 2a_{22}b_{22}+a_{11}a_{21}+a_{12}a_{22}-b_{11}b_{21}- b_{12}b_{22}-2a_{11}b_{11} ).
	\end{aligned}
\end{equation}
In presence of smooth quadratic and cubic terms, the latter is modified with the classical terms, as we present in \S\ref{AV_S}. 

Despite the similarity of $\sigma_{_\#}$ and $\sigma_s$, it turns out that there is no fixed smoothing of the absolute value function that universally predicts the correct criticality of Hopf bifurcations in these systems (\S\ref{s:smooth}).
For exposition of this issue, consider the $L^\infty$-approximations, with regularization parameter $\eps> 0$, of the absolute value function $f_1(x) = \abs{x}$, given by $f_2(x)=\frac{2}{\pi}\arctan\left(\frac{x}{\eps}\right)x$ (cf.\ \cite{Leine}), and $f_3(x)=\frac{2}{\pi}\arctan\left(\frac{x}{\eps}(x-1)(x+1)\right)x$, a convex and a nonconvex approximation, respectively.

This last function approximates the absolute value for large (absolute) values of $x$. We plot the graphs in Figure \ref{3_Cases}(a) and the bifurcation diagrams for $\dot{x}=\mu x-f_i(x)x$, $i\in\{1,2,3\}$, in Figure \ref{3_Cases}(b). In particular,  $f_3$ gives a `microscopically' wrong result, which is nevertheless correct `macroscopically'.

\begin{figure}
	\begin{subfigure}{.47\textwidth}
		\centering
		\includegraphics[width= 0.6\linewidth]{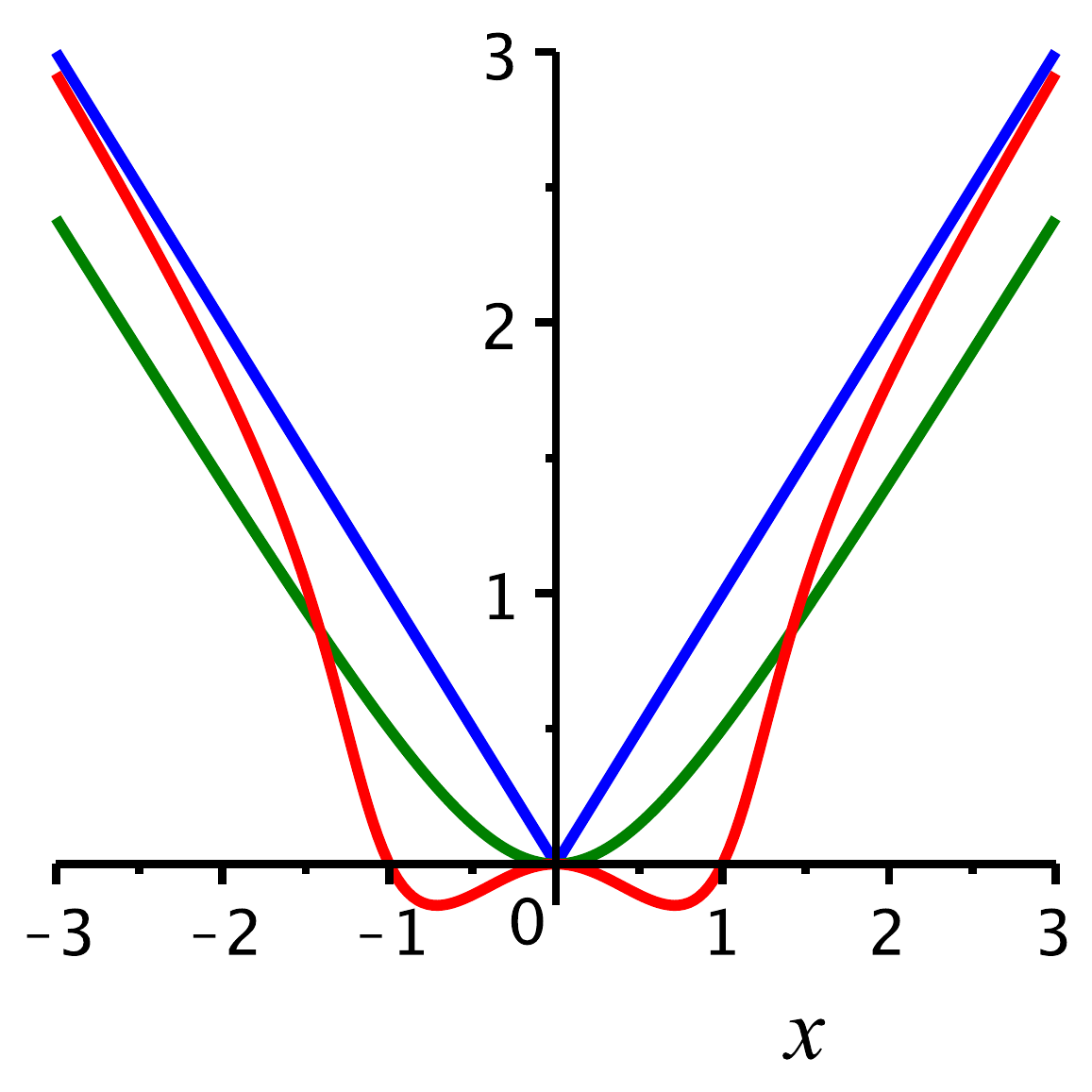}
		\subcaption{In blue $f_1(x)$, in green $f_2(x)$ and in red $f_3(x)$.}
	\end{subfigure}
	\begin{subfigure}{.47\textwidth}
		\centering
		\includegraphics[width= 0.6\linewidth]{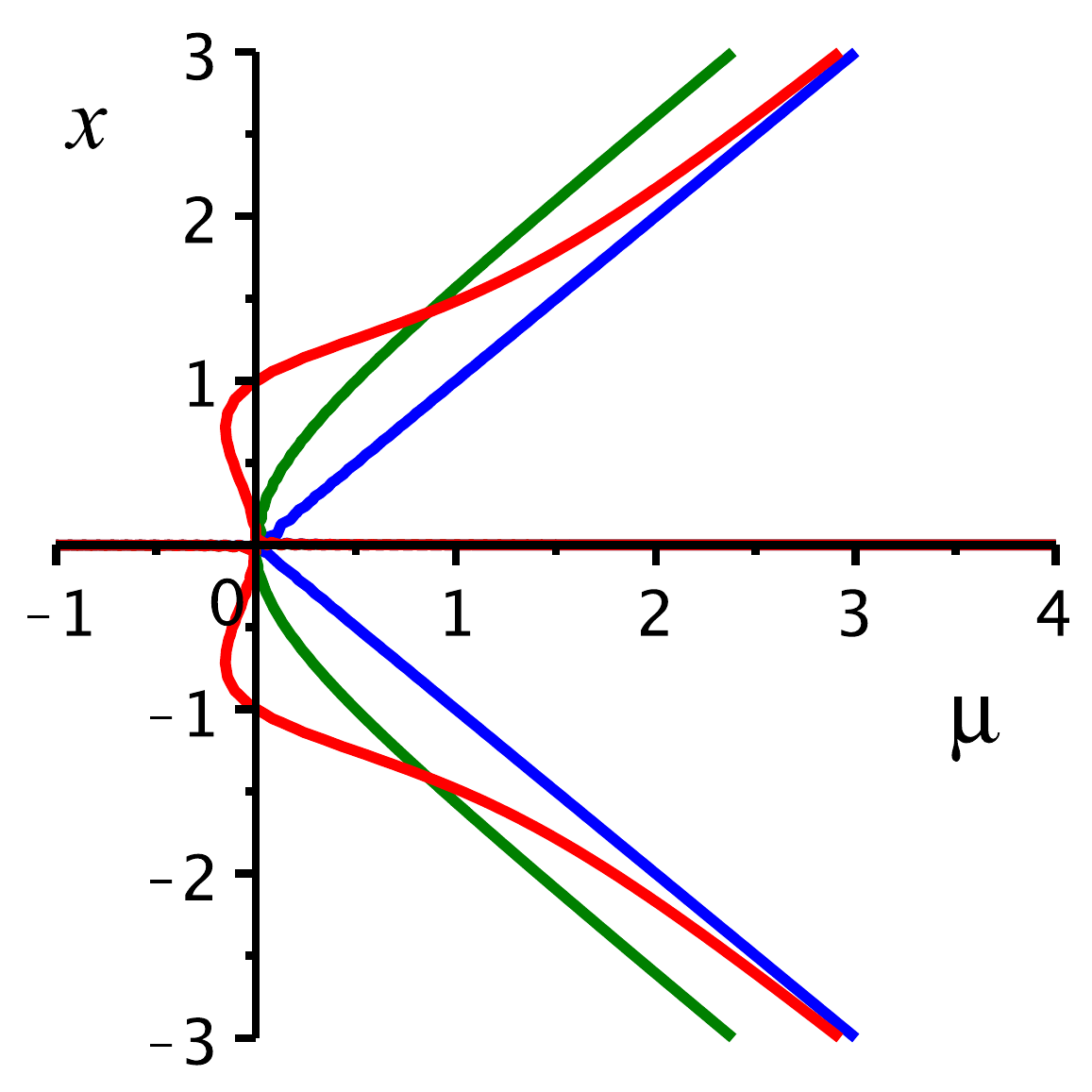}
		\subcaption{Bifurcation diagrams respect to $\mu$.}
	\end{subfigure}
	\caption{Comparison of bifurcation diagrams for $f_1, f_2$ and $f_3$ as in the text.}
	\label{3_Cases}
\end{figure}

Indeed, nonsmooth terms in models typically stem from passing to a macro- or mesoscopic scale such that  microscopic and smooth information is lost. Hence, the bifurcations in such models carry a macroscopic character and it is not surprising that an arbitrary smoothing changes this nature microscopically: a macroscopically supercritical bifurcation might show a subcritical behaviour on the microscopic level. However, the relevant information for the model is the macroscopic character, and ---for the class of models considered--- this is given by our newly derived Lyapunov coefficients.

The basic idea of proof is to change coordinates to a nonautonomous system for which the lack of smoothness is in the time variable only, so that averaging and the `direct method' can be applied.
We remark that in standard ODE literature on existence and bifurcations, smoothness of the time variable is often assumed, for example \cite{ChowHale}, but it is not needed in parts relevant for us. Indeed, merely continuity in time is for instance considered in \cite{CoddLev,Hartman,BookRasmussen}.

\medskip
{In order to demonstrate how to apply our method in a concrete case, we discuss in \S\ref{s:shim} the 3D model of a shimmying wheel from \cite{SBeregi}. This systems is of the form \eqref{e:abstract0} with pure second order modulus nonlinearity,
	but linear part not in normal form, though it has a nonzero real eigenvalue as well as a pair of complex conjugate eigenvalues that crosses the imaginary axis upon parameter change. 
	We fully characterize the resulting bifurcations in Theorem \ref{t:shym}.}

\medskip
We briefly discuss related literature. As mentioned, piecewise smooth vector fields have been widely investigated in many different applications as well as from a theoretical point of view, leading to a broad analysis in terms of bifurcation theory, cf.\ \cite{ReviewAlan,KuepperHoshamWeiss2013,NonsmoothSurvey2012}. 
Herein theory of continuous as well as discontinuous vector fields, e.g., \cite{Filippov1988, Kunze2000}, is used and further developed. A major distinction between our case and the systems studied in the literature is that we assume a separation a priori of a linear part and a nonsmooth nonlinear part. Broadly studied are the more general switching differential systems that are discontinuous across a switching surface or piecewise linear. These have been analyzed in various different forms, and we refer to \cite{TianThesis} for an exhaustive list of references; a typical case of discontinuity across the switching manifolds arises from the Heaviside step functions in biology neural models, e.g., \cite{Amari1977,Coombes2005,Harris2015}.
In analogy to center manifolds, the existence of invariant manifolds and sets 
has been investigated in \cite{IntegralManifold} for Carath\'eodory vector fields, and in \cite{KuepperHosham2010,KuepperHoshamWeiss2012} for vector fields with one switching surface. 
The bifurcation of periodic orbits in planar vector fields with one axis as the switching line has been studied in \cite{CollGasullProhens,GasTorr2003} via one-forms, and characteristic quantities have been determined, though the aforementioned Lyapunov coefficients are not included. Planar Hopf bifurcations for piecewise linear systems have been studied via return maps for one switching line in \cite{KuepperMoritz}, for several switching lines meeting at a point in \cite{BuzMedTor2018,Simpson2019,ZouKuepper2005}, 
and for nonintersecting switching manifold using Li\'enard forms in \cite{LlibrePonce}. Higher dimensional Filippov-type systems with a single switching manifold are considered in \cite{ZouKuepperBeyn}, which allows to abstractly study the occurrence of a Hopf bifurcation also for our setting; see also \cite{KuepperHoshamWeiss2013}. 
An approach via averaging with focus on the number of bifurcating periodic orbits for discontinuous systems is discussed in \cite{LlibreEtAl2017}. 
Nevertheless, we are not aware of results in the literature that cover our setting and our results on the explicit derivation of Lyapunov coefficients and the leading order analysis of bifurcating periodic solutions.

\medskip
This paper is organized as follows. In \S\ref{s:abstract} we discuss the abstract setting and provide basic results for the subsequent more explicit analysis. This is conducted in \S\ref{Planar_Section} for the planar case with linear part in normal form and nonlinear part with pure second order modulus terms for the nonsmooth functions, together with quadratic and cubic smooth functions. In \S\ref{s:general} we generalize the absolute value to arbitrary slopes, the system to higher space dimensions, and consider the linear part not being in normal form. Finally, in \S\ref{s:shim} we illustrate the application of our method and results to a concrete model.

\section{Abstract viewpoint}\label{s:abstract}
In this section we discuss the abstract starting point for our setting and motivate the specific assumptions used in the following sections. We consider an $n$-dimensional system of autonomous ODEs in an open set $U\subset\R^n$, with $0\in U$, of the form
\begin{equation}\label{e:abstract}
	\dot \u= A(\mu)\u+G(\u),
\end{equation}
with matrix $A(\mu)$ depending on a parameter $\mu\in\R$, and Lipschitz continuous nonlinear $G(\u)$.

We are interested in a detailed analysis of Hopf-type bifurcations at the equilibrium point $\u_*=0$. This requires control over the linear part, which is separated a priori in \eqref{e:abstract} from the potentially nondifferentiable nonlinear part ---note that $G$ is differentiable at $\u_*$ but not necessarily elsewhere. As usual for Hopf bifurcations, we assume that a pair of simple complex conjugate eigenvalues of $A(\mu)$ crosses the imaginary axis upon moving $\mu\in\R$ through zero. We collect the structural hypotheses on $A$ and $G$ without further loss of generality to our leading order analysis.

\begin{hypothesis}\label{h:AG}
	The eigenvalues of $A(\mu)$ are given by $\mu\pm \rmi\omega(\mu)$ with smooth nonzero $\omega(\mu)\in\R$  and all other eigenvalues have nonzero real part at $\mu=0$. The nonlinearity $G$ is Lipschitz continuous and satisfies $G(\u)=\calO(|\u|^2)$.
\end{hypothesis}

We denote by $E^\rmc$ the center eigenspace of $A(0)$ of the eigenvalues $\pm\rmi\omega(0)$, and first note the following result on invariant manifolds due to \cite{IntegralManifold}, which corresponds to center manifolds in the smooth case.

\begin{proposition}\label{prop:inv_man}
	Under Hypothesis~\ref{h:AG}, for $0\leq |\mu|\ll1$ there exist $2$-dimensional Lipschitz continuous invariant manifolds $\calM_\mu$ in an open neighborhood $U_*\subset U$ of $\u_*$, which contain $\u_*$ and all solutions that stay in $U_*$ for all time. Furthermore, if at $\mu=0$ all eigenvalues other than $\pm i\omega(0)$ have strictly negative real part, then each $\calM_\mu$ is (transversally) exponentially attractive. In addition, each $\calM_\mu$ is a Lipschitz continuous graph over $E^\rmc$ that depends Lipschitz continuously on $\mu$.
\end{proposition}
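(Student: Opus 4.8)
The plan is to deduce the statement from the integral manifold theorem of \cite{IntegralManifold} by casting \eqref{e:abstract} into the Carath\'eodory framework used there and verifying its hypotheses; the only regularity one can hope to extract is Lipschitz continuity, since $G$ itself is merely Lipschitz. The first step is to fix the invariant spectral splitting. At $\mu=0$ the eigenvalues $\pm\rmi\omega(0)$ lie on the imaginary axis while, by Hypothesis~\ref{h:AG}, every other eigenvalue has non-zero real part; this yields a decomposition $\R^n=E^\rmc\oplus E^{\mathrm{h}}$ into the two-dimensional center eigenspace $E^\rmc$ and the hyperbolic complement $E^{\mathrm{h}}=E^{\mathrm{s}}\oplus E^{\mathrm{u}}$, both $A(0)$-invariant. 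For $0\le|\mu|\ll1$ the perturbed eigenvalues $\mu\pm\rmi\omega(\mu)$ stay in a thin strip about the imaginary axis while the hyperbolic part remains uniformly bounded away from it, so the exponential dichotomy of the linear flow $\rme^{A(\mu)t}$ persists with a uniform spectral gap; I would record the associated spectral projections $P^\rmc_\mu,P^{\mathrm{h}}_\mu$ and the exponential estimates on the two invariant subspaces in a norm adapted to the splitting.

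Next I would localize the nonlinearity. Because $G(\u)=\calO(|\u|^2)$, its Lipschitz constant on a ball $B_\delta(0)$ tends to $0$ as $\delta\to0$; multiplying $G$ by a smooth cutoff supported in $U_*$ produces a modified vector field that agrees with the original on a smaller neighborhood and whose global Lipschitz constant $\eta$ can be made arbitrarily small. This modification changes neither the germ of the system near $\u_*$ nor the set of orbits that remain in $U_*$, so the manifold produced for the cutoff system restricts to the one asserted in the proposition. With the dichotomy and small $\eta$ in hand, the hypotheses of the cited theorem are met, and I would invoke it to obtain $\calM_\mu$ as the graph of a Lipschitz map $h_\mu:E^\rmc\to E^{\mathrm{h}}$ with $h_\mu(0)=0$. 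The Lyapunov--Perron characterization underlying that construction --- a point lies on $\calM_\mu$ exactly when the variation-of-constants integral for its forward/backward orbit converges in the adapted norm --- is precisely what guarantees that $\calM_\mu$ contains $\u_*$ and every solution staying in $U_*$ for all time.

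For the two remaining assertions I would proceed as follows. When all non-central eigenvalues have strictly negative real part at $\mu=0$, the hyperbolic complement is purely stable, $E^{\mathrm{h}}=E^{\mathrm{s}}$, and $\calM_\mu$ is a center manifold; comparing the exponential contraction along $E^{\mathrm{s}}$ with the subexponential growth generated on $E^\rmc$ by the small nonlinearity yields transverse exponential attraction through the standard asymptotic-phase/invariant-fibration argument, which remains valid in the Lipschitz category since it relies only on Gronwall-type estimates. Finally, Lipschitz dependence on $\mu$ follows from the uniform contraction principle: the fixed-point operator defining $h_\mu$ depends Lipschitz-continuously on $\mu$ through $A(\mu)$, the projections $P^\rmc_\mu,P^{\mathrm{h}}_\mu$, and the dichotomy constants, while its contraction factor is bounded uniformly in $\mu$ by the smallness of $\eta$; hence the fixed point $h_\mu$, and with it $\calM_\mu$, inherits Lipschitz dependence on the parameter.

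The main obstacle, and the reason one cites \cite{IntegralManifold} rather than a classical center-manifold theorem, is the mere Lipschitz regularity of $G$: the usual $C^k$ machinery and the smoothing it affords are unavailable, so one must work in a framework (time-dependent Carath\'eodory vector fields with continuous-in-time solutions) that tolerates non-differentiable nonlinearities and yields only a Lipschitz graph. Verifying that our autonomous, piecewise-smooth system genuinely satisfies the measurability and Lipschitz conditions of that framework --- and that the parameter $\mu$ enters as an admissible Lipschitz perturbation rather than merely continuously --- is the step requiring the most care.
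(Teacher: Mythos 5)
Your proposal is correct, and its core---cutting off the vector field so that the Lipschitz constant of $G$ becomes arbitrarily small, then invoking the invariant manifold theorem of \cite{IntegralManifold}---is exactly the paper's argument. Where you genuinely diverge is in the treatment of the parameter and of stability. The paper handles $\mu$ by the suspension trick: append the trivial equation $\dot\mu=0$ and apply \cite[Corollary 6.4]{IntegralManifold} \emph{once} to the extended autonomous system, in which the decomposition $A(\mu)\u = A(0)\u + \big(A(\mu)-A(0)\big)\u$ turns the $\mu$-dependent part of the linear term into a quadratic-order nonlinearity in $(\u,\mu)$; the resulting invariant manifold is a Lipschitz graph over $E^\rmc\times\R$, and slicing it at fixed $\mu$ delivers the family $\calM_\mu$ together with its Lipschitz dependence on $\mu$ at no extra cost. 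You instead apply the theorem for each fixed $\mu$ separately (using persistence of the pseudo-hyperbolic splitting) and then recover Lipschitz dependence on $\mu$ from the uniform contraction principle applied to the Lyapunov--Perron operator. That route can be made to work, but it is the more delicate one: the operator involves $\rme^{A(\mu)t}$ over unbounded time intervals, so its Lipschitz dependence on $\mu$ must be verified in the exponentially weighted norms (a naive bound on $\rme^{A(\mu)t}P^\rmc_\mu-\rme^{A(0)t}P^\rmc_0$ degrades as $|t|\to\infty$), which is precisely the technicality the suspension sidesteps---and, fittingly, the point you yourself flag as requiring the most care. Similarly, for transverse exponential attraction the paper simply cites \cite[Corollary 6.5]{IntegralManifold}, whereas you sketch a Gronwall/asymptotic-phase argument; that is viable in the Lipschitz category but re-proves what the reference already supplies. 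Finally, your worry about the Carath\'eodory measurability conditions is vacuous here: after the cutoff the system is autonomous and continuous, so those hypotheses hold trivially.
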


\begin{proof}
	The statements follow directly from \cite{IntegralManifold} upon adding a trivial equation for the parameter, as usual in center manifolds.  As for center manifolds, the proof relies on cutting off the vector field near $\u_*$, cf.\ \cite[Remark 6.2]{IntegralManifold}, and we infer the existence of $\calM_\mu$ from \cite[Corollary 6.4]{IntegralManifold}. The assumptions are satisfied since $G$ is of quadratic order, which means the Lipschitz constant of $G$ becomes arbitrarily small on small balls centered at $\u_*$. The stability statement follows from \cite[Corollary 6.5]{IntegralManifold}. 
\end{proof}
More refined stability information and estimates can be found in  \cite{IntegralManifold}.

Next, we present a variant of the standard Andronov-Hopf bifurcation theorem, cf.\ \cite{ChowHale}, which does not use any additional smoothness assumption. Here the uniqueness part relies on Proposition~\ref{prop:inv_man}, but the existence is independent of it. As mentioned, in case of a single switching surface, the abstract bifurcation of periodic solutions without smoothness statement concerning the branch follows from the results in \cite{ZouKuepperBeyn}, see also \cite{KuepperHoshamWeiss2013}.

\begin{theorem}
	\label{t_per_orb}
	Assume Hypothesis~\ref{h:AG}. 
	A locally unique branch of periodic solutions to \eqref{e:abstract} bifurcates from $\u_*=0$ at $\mu=0$. Specifically, there is a neighborhood $V\subset U$ of $\u_*$, such that for $0<|\mu|\ll1$ periodic solutions to \eqref{e:abstract} in $V$ are given (up to phase shift) by a Lipschitz continuous one-parameter family of $\tilde\omega(a)$-periodic solutions $\u_{\rm per}(t;a)$, $\mu=\mu(a)$ for $0\leq a\ll1$, 
	$\tilde\omega(0)=\omega(0)$, $\mu(0)=0$, whose projections into $E^\rmc$ have the complexified form 
	$a \rme^{\rmi \tilde\omega(a) t} + o(|a|)$. Moreover, we have the estimate $\mathrm{dist}(\u_{\rm per}(\cdot;a),E^{\rmc}) =\calO(a^{2})$.
\end{theorem}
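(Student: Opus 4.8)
The plan is to construct the branch by a Lyapunov--Schmidt reduction on the space of periodic functions, replacing the smooth implicit function theorem by contraction arguments that exploit the smallness of the Lipschitz constant of $G$ near $\u_*$. First I would rescale. Seeking periodic solutions of amplitude $\sim a$ and unknown frequency $\tilde\omega$ close to $\omega(0)$, I set $\u=a\mathbf{x}$ and normalize the period by the time change $s=\tilde\omega t$, so that \eqref{e:abstract} becomes the search for $2\pi$-periodic $\mathbf{x}$ solving
\[
\tilde\omega\, \mathbf{x}'(s) = A(\mu)\mathbf{x}(s) + a^{-1}G(a\mathbf{x}(s)).
\]
Because $G=\calO(|\u|^2)$ and is Lipschitz with Lipschitz constant tending to $0$ at $\u_*$ (as already used in the proof of Proposition~\ref{prop:inv_man}), the rescaled nonlinearity $\tilde G(a,\mathbf{x}):=a^{-1}G(a\mathbf{x})$ extends continuously by $\tilde G(0,\cdot)=0$, is of order $\calO(a)$ uniformly on bounded sets, and has Lipschitz constant in $\mathbf{x}$ that is small for $a$ small. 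Thus the problem is a Lipschitz-small perturbation of the linear center problem.

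Second, I would split along the spectral decomposition $\R^n=E^\rmc\oplus E^{\mathrm{h}}$ of $A(0)$ into center and hyperbolic parts, writing $\mathbf{x}=\mathbf{x}_\rmc+\mathbf{x}_{\mathrm{h}}$. On the hyperbolic block the operator $\tilde\omega\, d/ds - A_{\mathrm{h}}(\mu)$ has no kernel on $2\pi$-periodic functions for $(\mu,\tilde\omega)$ near $(0,\omega(0))$ (no eigenvalue equals $\rmi k\tilde\omega$, $k\in\Z$), hence is boundedly invertible with a uniform bound; the fixed-point equation for $\mathbf{x}_{\mathrm{h}}$ is then a contraction and yields a Lipschitz solution map $\mathbf{x}_{\mathrm{h}}=\Phi(\mathbf{x}_\rmc;a,\mu,\tilde\omega)=\calO(a)$. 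This is the reduction step, and it already gives the distance estimate $\mathrm{dist}(\u_{\rm per},E^\rmc)=a\,|\mathbf{x}_{\mathrm{h}}|=\calO(a^2)$.

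Third, I would derive the bifurcation equation. The center operator $\omega(0)\, d/ds - A(0)$ restricted to $E^\rmc$ has two-dimensional kernel spanned by the modes $\cos s,\sin s$; fixing the amplitude by normalizing the projection onto one mode to $1$ (this is the role of $a$) and removing the $S^1$ time-shift symmetry by a phase condition on the other mode, the two Fredholm solvability conditions become two real equations $F(\mu,\tilde\omega;a)=0$ for the two scalar unknowns $(\mu,\tilde\omega)$. At $a=0$ these read $\mu=0$, $\tilde\omega=\omega(0)$, and the derivative of $F$ in $(\mu,\tilde\omega)$ there is invertible, with unit diagonal, coming from the transversal crossing and smooth dependence of the eigenvalues $\mu\pm\rmi\omega(\mu)$ built into Hypothesis~\ref{h:AG}. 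Since the $a$-dependent part of $F$ is continuous in $a$ and Lipschitz-small, a Lipschitz (uniform-contraction) version of the implicit function theorem provides a unique, continuous --- in fact Lipschitz --- solution $(\mu(a),\tilde\omega(a))$ with $\mu(0)=0$ and $\tilde\omega(0)=\omega(0)$. Undoing the scalings yields the family $\u_{\rm per}(\cdot;a)$ whose center projection has complexified form $a\,\rme^{\rmi\tilde\omega(a)t}+o(|a|)$, establishing existence and the stated form.

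Finally, for local uniqueness I would invoke Proposition~\ref{prop:inv_man}: any periodic orbit that stays in $U_*$ lies on the $2$-dimensional Lipschitz invariant manifold $\calM_\mu$, since a periodic orbit remains in $U_*$ for all time. Restricting the construction to $\calM_\mu$, whose dimension equals that of $E^\rmc$, the reduction above is exhaustive, so every periodic solution in a suitable neighborhood $V\subset U_*$ corresponds to a solution of the same bifurcation equation and hence lies on the branch just found. I expect the main obstacle to be this solvability step in the absence of smoothness: one must verify that, after scaling, all the maps involved are genuinely Lipschitz-small perturbations of invertible linear or smooth data --- in particular the uniform invertibility of the hyperbolic block and the correct leading structure of $F$ --- so that contraction arguments can replace the classical implicit function theorem throughout.
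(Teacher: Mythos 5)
Your existence argument is correct and takes a genuinely different route from the paper's. The paper also replaces the implicit function theorem by contraction arguments, but it works in cylindrical coordinates adapted to $E^\rmc$: using Proposition~\ref{prop:inv_man} it writes the transverse component as $u=r\tu$, solves the periodic boundary value problem for $\tu_0$ by a contraction based on the invertibility of $e^{\tA T}-\mathrm{Id}$ (the analogue of your non-resonance of the hyperbolic block), then rescales time by the angle $\varphi$ and solves $r(2\pi)=r(0)$ by a second contraction, obtaining $\mu=\mu(r_0)$ directly; the frequency is recovered afterwards from the $\varphi$-equation rather than treated as an unknown. Yours is the function-space Lyapunov--Schmidt dual of this: amplitude scaling $\u=a\mathbf{x}$, period normalization, contraction on the hyperbolic block, and a planar bifurcation equation solved for $(\mu,\tilde\omega)$. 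Both yield the Lipschitz branch and the estimate $\mathrm{dist}(\u_{\rm per}(\cdot;a),E^\rmc)=\calO(a^{2})$ --- yours via $\mathbf{x}_{\mathrm h}=\calO(a)$, the paper's via $u_\infty=r_\infty\tu_\infty=\calO(r_0^{2})$. The paper's parametrization by the radius makes the sign constraint $r_0\geq 0$ and the later criticality analysis immediate; yours isolates the frequency correction $\tilde\omega(a)$ more cleanly and treats $(\mu,\tilde\omega)$ symmetrically.

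The uniqueness step, however, has a concrete gap. The claim that ``the reduction above is exhaustive'' requires that every small periodic solution in $V$ fit your normalized ansatz, and two points obstruct this as written. First, your ansatz only captures orbits whose period is close to $2\pi/\omega(0)$; a priori a small periodic orbit could wind $m\geq 2$ times around $\u_*$ before closing up, with minimal period near $2\pi m/\omega(0)$ --- such a solution is $2\pi m$-periodic but not $2\pi$-periodic in your rescaled time, so it never enters your bifurcation equation. The paper addresses exactly this: in angular time every periodic solution has period $2\pi m$ for some $m\in\N$, the contraction is run for every $m$, and the solutions coincide, so one may take $m=1$. You would need the analogous step (e.g., Lyapunov--Schmidt on $2\pi m$-periodic functions for each $m$, checking that the kernel stays two-dimensional and that the unique solution is in fact $2\pi$-periodic). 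Second, to write an arbitrary periodic solution as $a\mathbf{x}$ with normalized center projection you must know that its center projection never vanishes and is comparable in size to its sup norm; this is precisely where Proposition~\ref{prop:inv_man} does real work in the paper (if the center radius vanishes at one time then, by the graph property, $\u(t)=\u_*$ there and the solution is trivial). Your appeal to $\calM_\mu$ gestures at this but conflates the function-space reduction with the manifold reduction and skips the actual argument. Both points are repairable within your framework, but they constitute the non-trivial content of the local uniqueness claim.
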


This bifurcation is typically `degenerate' compared to the generic smooth Hopf bifurcation as in the example \eqref{Dpitchfork}, where {the bifurcating branch} is not $C^1$ through $u=0$. 
\begin{proof}
	We change coordinates such that $A(\mu)$ is in block-diagonal form with upper left 2-by-2 block for the eigenspace $E^\rmc$ having diagonal entries $\mu$ and anti-diagonal $\pm\omega(\mu)$, and remaining lower right $(n-2)$-dimensional block invertible at $\mu=0$; the modified $G$ remains of quadratic order and is Lipschitz continuous. Upon changing to cylindrical coordinates with vertical component $u=(u_3,\ldots,u_n)$, where $u_j$ are the scalar components of $\u$, we obtain 
	\begin{equation}\label{e:cylindrical0}
		\begin{aligned}
			\dot{r} &= \mu r +  \calR_1(r,u;\mu),\\
			r\dot{\varphi} &= \omega(0)r + \calR_2(r,u;\mu),\\
			\dot u &= \tA u + \calR_3(r,u;\mu).
		\end{aligned}
	\end{equation}
	Here $\tA$ is the invertible right lower block at $\mu=0$ and we suppress the dependence on $\varphi$ of $\calR_j$, $j=1,2,3$. Due to the Hypothesis~\ref{h:AG} in these coordinates we have the estimates $\calR_1(r,u;\mu) = \calO(r^2 +|\mu|(r^2 + |u|) + |u|^2)$, $\calR_{j}(r,u;\mu) = \calO(r^2 +|\mu|(r + |u|) + |u|^2)$, $j=2,3$. We seek initial conditions $r_0,u_0,\varphi_0$ and a parameter $\mu$ that permit a periodic solution near the trivial solution $r=u=0$. 
	By Proposition~\ref{prop:inv_man} any such periodic orbit is a Lipschitz graph over $E^\rmc$ so that there is a periodic function $\tu$ with $u=r\tu$. 
	Let $T>0$ denote the period and suppose $r(t)=0$ for some $t\in[0,T]$. Then $u(t)=0$ and therefore $\u(t)=\u_*$, so that $\u=\u_*$ is the trivial solution. Hence, we may assume that $r$ is nowhere zero and thus $\tu$ solves
	\[
	\dot \tu = \tA\tu + \tcalR_3(r,\tu;\mu),
	\]
	where $\tcalR_3(r,\tu;\mu) = \calO\big(r+|\mu|+|\tu|(|\mu|+r|\tu|)\big)$. By variation of constants we solve this for given $r, \varphi$ as
	\begin{equation}\label{e:tv}
		\tu(t) = e^{\tA t}\tu_0 + \int_0^t e^{\tA(t-s)}\tcalR_3(r(s),\tu(s);\mu) ds,
	\end{equation}
	with initial condition $\tu(0)=\tu_0$. $T$-periodic solutions solve in particular the boundary value problem 
	\begin{align}
		0&= \tu(T)-\tu(0) = \int_0^{T} \dot \tu(s)ds 
		\nonumber\\
		&= \int_0^{T}\tA e^{\tA s}\tu_0 ds +  \int_0^{T}\left(\tA \int_0^s e^{\tA(s-\tau)}\tcalR_3(r(\tau),\tu(\tau);\mu) d\tau + \tcalR_3(r(s),\tu(s);\mu) \right)ds \nonumber\\
		&= \left(e^{\tA T} - \mathrm{Id}\right)\tu_0 + \tcalR_4(r,\tu;\mu), \label{e:bvp0}
	\end{align}
	where $e^{\tA T} - \mathrm{Id}$ is invertible since  $\tA$ is invertible. 
	
	We have $\tcalR_4(r,\tu;\mu)= \calO\big(r_\infty +|\mu| + \tu_\infty(|\mu| + r_\infty \tu_\infty)\big)$ with $r_\infty = \sup\{r(t)\;|\; t\in[0,T]\}$, $\tu_\infty = \sup\{|\tu(t)|\;|\; t\in[0,T]\}$ and by \eqref{e:tv} there is a $C>0$ depending on $T$ with 
	\[
	\tu_\infty \leq C \big(|\tu_0| + r_\infty + |\mu| + \tu_\infty(|\mu|+r_\infty \tu_\infty)\big)
	\;\Leftrightarrow\;\big(1-C(|\mu|+r_\infty \tu_\infty)\big)\tu_\infty \leq C(|\tu_0| + r_\infty + |\mu|),
	\]
	so that for $0\leq |\tu_0|, r_\infty, |\mu|\ll1$ it follows $\frac{1}{2}\leq \big(1-C(|\mu|+r_\infty \tu_\infty)\big)$ and therefore we obtain $\tu_\infty \leq 2C(|\tu_0| + r_\infty + |\mu|)$. Thus,
	\[
	\tcalR_4(r,\tu;\mu)= \calO(r_\infty +|\mu| + |\tu_0|(|\mu| + r_\infty|\tu_0|)).
	\]
	Based on this, the uniform Banach contraction principle applies upon rewriting \eqref{e:bvp0} as 
	\[
	\tu_0 = \left(e^{\tA T} - \mathrm{Id}\right)^{-1}\tcalR_4(r,\tu;\mu), 
	\]
	which yields a locally unique Lipschitz continuous solution $\tu_0(r,\varphi;\mu) = \calO(r_\infty+|\mu|)$. 
	Note that together with the aforementioned, this implies the estimate $\tu_{\infty}=\calO(r_{\infty}+|\mu|)$.
	
	Substituting ${u}(t) = r(t)\tu(t)$ with initial condition $\tu_0(r,\varphi;\mu)$ for $\tu$ 
	into the first two equations of \eqref{e:cylindrical0} gives
	\begin{equation}\label{e:cylindrical01}
		\begin{aligned}
			\dot{r} &= \mu r +  \calR_5(r;\mu),\\
			\dot{\varphi} &= \omega(0) + \calR_6(r;\mu),
		\end{aligned}
	\end{equation}
	where we have divided the equation for $\varphi$ by $r$, since we look for nonzero solutions, and 
	\[
	\calR_5(r;\mu)=r\calO\big(r+|\mu|r+|\tu|(|\mu|+r|\tu|)\big)=r\calO(r_\infty + |\mu|r_\infty)=r\calO(r_\infty), 
	\quad \calR_6(r;\mu) = \calO(r_\infty+|\mu|).
	\]
	Since $\omega(0)\neq 0$, for $0\leq r,|\mu|\ll 1$ we may normalize the period to $T=2\pi$ and obtain
	\begin{equation}\label{e:absper}
		\frac{d r}{d\varphi} = \frac{\mu r +  \calR_5(r;\mu)}{\omega(0) + \calR_6(r;\mu)}
		= r\left(\frac{\mu}{\omega(0)}  +  \calR_7(r;\mu)\right),
	\end{equation}
	where $\calR_7(r;\mu) = \calO(r_\infty + |\mu|r_\infty) = \calO(r_\infty)$ follows from direct computation.
	Analogous to $\tu$ above, the boundary value problem $r(2\pi)=r(0)$ can be solved by the uniform contraction principle, which yields a locally unique and Lipschitz continuous solution $\mu(r_0)= \calO(r_0)$. Since $\varphi$ is $2\pi$-periodic, any periodic solution has a period $2\pi m$ for some $m\in \N$, and the previous computation gives a unique solution for any $m$, from which we took the one with minimal period, i.e., $m=1$.
	
	Finally, the statement of the form of periodic solutions directly proceeds with $a=r_0$ from changing back to the original time scale and coordinates. Notice that $r_{\infty}=\calO(r_{0})$ holds true since we are integrating an ODE over a bounded interval, such that the ratio between $r_\infty$ and $r_0$ is a bounded quantity, which is uniform because the vector field goes to zero when $r$ goes to zero. Therefore, and together with $\mu(r_0)= \calO(r_0)$, the previous estimate $\tu_{\infty}=\calO(r_{\infty}+|\mu|)$ becomes $\tu_{\infty}=\calO(r_0)$. Moreover, applying the supremum norm on both sides of $u=r\tu$ one gets $u_\infty = r_\infty\tu_\infty$, which is precisely of order $\calO(r_0^2)$, as we wanted to prove.
\end{proof}

While this theorem proves the existence of periodic orbits, it does not give information about their location in parameter space, scaling properties and stability; the problem is to control the leading order part of $\calR_7$ in \eqref{e:absper}, which ---in contrast to the smooth case--- turns out to be tedious. Consequently, we next aim to identify a suitable setting analogous to the center manifold reduction, and normal form transformations for a smooth vector field. In particular, we seek formulas for the analogue of the first Lyapunov coefficient from the smooth framework, whose sign determines whether the bifurcation is sub- or supercritical. 

\medskip
In order to specify a setting that allows for such an analysis, and is also relevant in applications, we will assume additional regularity away from sufficiently regular hypersurfaces $H_j$, $j=1,\ldots,n_H$, and denote $H:=\cup_{j=1}^{n_H} H_j$. We refer to these hypersurfaces as \emph{switching surfaces} and assume these intersect pairwise transversally at the equilibrium point $\u_*=0$.

\begin{hypothesis}\label{h:Ck}
	The switching surfaces $H_j$, $j=1,\ldots,n_H$, are $C^k$ smooth, $k\geq1$ and intersect transversally at $\u_*=0$. In each connected component of $U\setminus H$ the function $G$ is $C^k$ smooth and has a $C^k$ extension to the component's boundary. 
\end{hypothesis}

For simplicity, and with applications in mind, we consider only two switching surfaces, i.e., $n_H=2$. In order to facilitate the analysis, we first map $H_1, H_2$ locally onto the axes by changing coordinates. 

\begin{lemma}\label{l:cyl}
	Assume Hypotheses~\ref{h:AG} and \ref{h:Ck} and let $n_H=2$. There is a neighborhood $V\subset U$ of $\u_*$ and a diffeomorphism $\Psi$ on $V$ such that $\Psi(H_j\cap V) = \{ u_j=0\}\cap \Psi(V)$, $j=1, 2$; in particular $\Psi(\u_*)=0$.
	In subsequent cylindrical coordinates $(r,\varphi,u)\in \R_+ \times[0,2\pi)\times\R^{n-2}$ with respect to 
	the $(u_1,u_2)$-coordinate plane,
	the vector field is $C^k$ with respect to $(r,u)$.
\end{lemma}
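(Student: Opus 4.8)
The plan is to first flatten the two switching surfaces onto the coordinate hyperplanes by a $C^k$ diffeomorphism, and then to exploit the fact that passing to polar coordinates in the $(u_1,u_2)$-plane moves the non-smoothness of $G$ entirely into the angular variable $\varphi$, leaving the radial and transverse directions smooth.

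For the straightening, near $\u_*=0$ I would write each switching surface as a regular zero level set, $H_j\cap V_0=\{\rho_j=0\}$ with $\rho_j\in C^k$ and $\nabla\rho_j(0)\neq 0$. Transversality of $H_1$ and $H_2$ at $\u_*$ means precisely that $\nabla\rho_1(0)$ and $\nabla\rho_2(0)$ are linearly independent. I would complete these to a full coordinate system by choosing linear functionals $\ell_3,\dots,\ell_n$ such that $\nabla\rho_1(0),\nabla\rho_2(0),\ell_3,\dots,\ell_n$ form a basis of $(\R^n)^*$, and set $\Psi=(\rho_1,\rho_2,\ell_3,\dots,\ell_n)$. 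Then $D\Psi(0)$ is invertible, so by the inverse function theorem $\Psi$ is a $C^k$ diffeomorphism on some neighborhood $V\subset V_0$ of $\u_*$, with $\Psi(\u_*)=0$ and $\Psi(H_j\cap V)=\{u_j=0\}\cap\Psi(V)$ by construction. Pushing the vector field forward by $\Psi$ yields a field $F$ that is Lipschitz continuous and, by Hypothesis~\ref{h:Ck}, $C^k$ on each connected component of $\Psi(V)\setminus(\{u_1=0\}\cup\{u_2=0\})$ with a $C^k$ extension to each component's closure (I would keep track of the standard one-derivative bookkeeping in the push-forward, which is harmless for the values of $k$ used later).

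Next I would introduce $u_1=r\cos\varphi$, $u_2=r\sin\varphi$ and retain $u=(u_3,\dots,u_n)$, which transforms the system into the form
\begin{equation*}
\dot r=\cos\varphi\,F_1+\sin\varphi\,F_2,\qquad r\dot\varphi=\cos\varphi\,F_2-\sin\varphi\,F_1,\qquad \dot u=F_u,
\end{equation*}
with every $F_\bullet$ evaluated at $(r\cos\varphi,r\sin\varphi,u)$; writing the angular equation as $r\dot\varphi$ avoids the spurious $1/r$. The decisive observation is that the four connected components of the complement of the switching surfaces in the $(u_1,u_2)$-plane are exactly the open sectors $\{\pm u_1>0,\ \pm u_2>0\}$, and these correspond to $\varphi$ lying in the four open intervals cut out by $\{0,\tfrac{\pi}{2},\pi,\tfrac{3\pi}{2}\}$. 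Hence, for each fixed $\varphi$, the ray $r\mapsto(r\cos\varphi,r\sin\varphi)$ stays inside the closure of a single sector and never crosses a switching surface.

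Finally, I would conclude the regularity. For fixed $\varphi$ the substitution $(r,u)\mapsto(r\cos\varphi,r\sin\varphi,u)$ is linear in $r$, hence real-analytic, while $F$ restricted to the closure of a single sector is $C^k$; therefore each right-hand side above is $C^k$ in $(r,u)$, the angular prefactors $\cos\varphi,\sin\varphi$ being constants for fixed $\varphi$. The non-smoothness of $G$ survives only in the (merely continuous) dependence on $\varphi$, which is all the lemma claims. The step requiring the most care — and the main obstacle — is the regularity at the four switching angles $\varphi\in\{0,\tfrac{\pi}{2},\pi,\tfrac{3\pi}{2}\}$: there the ray lies on a switching surface rather than in an open sector, and $C^k$-smoothness in $(r,u)$ is obtained not from $F$ itself (which is only Lipschitz across the surface) but from restricting the one-sided $C^k$ extension guaranteed by Hypothesis~\ref{h:Ck} to that boundary ray. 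Everything else reduces to the routine verification that the straightening map has invertible differential and that polar coordinates are smooth away from $r=0$.
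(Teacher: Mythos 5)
Your proof is correct and takes essentially the same route as the paper's (which is only a two-sentence sketch): straighten $H_1,H_2$ onto the coordinate hyperplanes via defining functions and the inverse function theorem, then note that each fixed-$\varphi$ ray stays in the closure of a single quadrant, where the one-sided $C^k$ extension from Hypothesis~\ref{h:Ck} yields $C^k$ dependence on $(r,u)$. The details you add --- the explicit argument at the switching angles $\varphi\in\{0,\pi/2,\pi,3\pi/2\}$ and the flagged one-derivative bookkeeping in the push-forward (an imprecision present, but unaddressed, in the paper's own statement) --- are exactly what the paper's sketch glosses over.
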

\begin{proof}
	The smoothness of $H_j$, $j=1, 2$, and their transverse intersection allow for a smooth change of coordinates that straighten $H_1$, $H_2$ locally near $\u_*$ and maps these onto the coordinate hypersurfaces $\{u_1=0\}$, $\{u_2=0\}$, respectively. The assumed smoothness away from the switching surfaces implies the smoothness in the radial direction. 
\end{proof}

A concrete analysis of the nature of a Hopf bifurcation requires additional information on the leading order terms in $G$.  As shown next, a sufficient condition to identify the structure of the quadratic terms is Hypothesis~\ref{h:Ck} with $k=2$, where we use the notation $\ABS{\cdot}{p}$ as defined in \eqref{gen_abs_val}.

\begin{theorem}
	\label{t:abstractnormal}
	Assume Hypotheses~\ref{h:AG} and \ref{h:Ck}
	for $k\geq 2$ and let $n_H=2$. In the coordinates of Lemma~\ref{l:cyl}, the nonsmooth quadratic order terms in a component $G_j$, $j=1,\ldots, n$, of $G$ are of the form $u_\ell[u_i\Pp$, $1\leq \ell\leq n$, $i=1,2$, where $\pp,\pn\in \R$ depend on $i,\ell,j$ and are the limits of second derivatives of $G$ on the different connected components of $\R^n\setminus H$.
\end{theorem}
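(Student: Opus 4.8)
The plan is to reduce everything to a coefficient-matching computation for the positively homogeneous quadratic part of each component $G_j$, extracted quadrant by quadrant. In the straightened coordinates of Lemma~\ref{l:cyl} the switching surfaces are $\{u_1=0\}$ and $\{u_2=0\}$, so near $\u_*=0$ the complement $U\setminus H$ is the union of the four open quadrants $Q$ determined by the signs of $u_1$ and $u_2$. By Hypothesis~\ref{h:Ck} with $k=2$, $G_j$ has a $C^2$ extension to the closure of each $Q$, so the one-sided limits $H^{Q}_{ab}$ of $\partial_a\partial_b G_j$ at the origin (taken within $Q$) exist for all $1\le a,b\le n$. Because $G=\calO(|\u|^2)$ we have $G_j(0)=0$ and $DG_j(0)=0$, so Taylor's theorem on each closed quadrant gives $G_j(\u)=\tfrac12\sum_{a,b}H^{Q}_{ab}u_au_b+o(|\u|^2)$ for $\u\in Q$. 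The quadratic order terms are thus the piecewise-quadratic form glued from these four forms along the axes, and it only remains to rewrite it in the asserted basis.

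First I would determine which coefficients can jump. Continuity forces the two $C^2$ extensions meeting along a common coordinate face to agree there; matching their tangential second derivatives at $\u_*$ gives $H^{Q}_{ab}=H^{Q'}_{ab}$ for quadrants adjacent across $\{u_1=0\}$ whenever $a,b\neq1$, and likewise across $\{u_2=0\}$ whenever $a,b\neq2$. This classifies the monomials $u_au_b$: if $a,b\ge3$ the coefficient is common to all quadrants and contributes an ordinary smooth term; if exactly one index is a switching index $i\in\{1,2\}$ and the other is $\ell$, then $H_{i\ell}$ depends only on $\sgn u_i$, so the contribution is $H^{+}_{i\ell}u_\ell u_i$ for $u_i\ge0$ and $H^{-}_{i\ell}u_\ell u_i$ for $u_i<0$, i.e.\ precisely $u_\ell[u_i\Pp$ with $\pp=H^{+}_{i\ell}$ and $\pn=H^{-}_{i\ell}$. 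This already covers the diagonal case $\ell=i$ (the square $u_i^2$) and exhibits the slopes as the advertised one-sided limits of second derivatives.

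The only monomial not yet in the asserted form is the cross term $u_1u_2$, whose coefficient is unconstrained by continuity and may take four values $H^{(\pm,\pm)}_{12}$. Writing $H^{(\eps_1,\eps_2)}_{12}=\bar H+\beta\eps_1+\gamma\eps_2+\delta\eps_1\eps_2$ with $\eps_j=\sgn u_j$, the piece $\bar H u_1u_2$ is smooth, $\beta\,\sgn(u_1)\,u_1u_2=u_2[u_1]_{-\beta}^{\beta}$ is of bracket type with $i=1$, and $\gamma\,\sgn(u_2)\,u_1u_2=u_1[u_2]_{-\gamma}^{\gamma}$ is of bracket type with $i=2$; all three are admissible. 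Hence the whole statement reduces to showing that the remaining piece $\delta\,\sgn(u_1)\sgn(u_2)\,u_1u_2=\delta|u_1||u_2|$ vanishes, i.e.\ that the mixed second difference $\delta=\tfrac14(H^{(+,+)}_{12}-H^{(+,-)}_{12}-H^{(-,+)}_{12}+H^{(-,-)}_{12})$ is zero.

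Establishing $\delta=0$ is the main obstacle and the point where more than bare continuity is required: the continuous, piecewise-$C^2$ function $|u_1||u_2|$ has $\delta=1$ and is genuinely not a sum of terms $u_\ell[u_i\Pp$, so the reduction cannot rest on Hypotheses~\ref{h:AG} and~\ref{h:Ck} alone. I would therefore pin down the precise extra input that forces $\delta=0$, the natural one being that the non-smoothness of $G$ enters through single coordinates only -- each non-smooth contribution being affine in $|u_1|$ and in $|u_2|$ separately, with no joint $|u_1||u_2|$ coupling -- which is exactly the second order modulus structure the theorem asserts and which rules the obstruction out. With $\delta=0$ in hand the proof closes at once: collect the smooth monomials together with the $\bar H,\beta,\gamma$ pieces into the smooth part, and the remaining sign-dependent coefficients into the finitely many terms $u_\ell[u_i\Pp$, $i\in\{1,2\}$, whose slopes $\pp,\pn$ are the one-sided second-derivative limits.
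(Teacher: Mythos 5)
Your first two paragraphs are exactly the paper's own argument, carried out with more care: Taylor-expand the $C^2$ extension of $G_j$ on the closure of each quadrant, use $G_j(0)=0$, $DG_j(0)=0$, and match tangential second derivatives along the faces to see that only coefficients involving the indices $1,2$ can jump. Where you go beyond the paper is your third paragraph, and the obstruction you isolate there is genuine. The paper's proof simply asserts that each $\frac{\partial^2}{\partial u_i\partial u_\ell}G_j(0)$ is ``one-sided with respect to $i$'', i.e.\ takes only two values $p_{j\ell i_\pm}$, and that the functions $\ABS{u_i}{p_{j\ell i}}$ then give a closed formula. For the mixed derivative $i,\ell\in\{1,2\}$, $i\neq\ell$, nothing in the hypotheses enforces this: it may have four distinct one-sided limits, one per quadrant, and the sign-product component $\delta\,\sgn(u_1)\sgn(u_2)\,u_1u_2=\delta\abs{u_1}\abs{u_2}$ is not a combination of terms $u_\ell[u_i\Pp$ plus smooth terms. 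Your linear-independence argument for this is correct: the coefficient of $u_1u_2$ in any admissible combination is of the form $\alpha+\beta\sgn(u_1)+\gamma\sgn(u_2)$ on the four quadrants, and $1,\sgn(u_1),\sgn(u_2),\sgn(u_1)\sgn(u_2)$ are linearly independent as functions of the sign pattern. Moreover, $G_j(\u)=\abs{u_1}\abs{u_2}$ satisfies every stated hypothesis -- it is Lipschitz, $\calO(|\u|^2)$, $C^\infty$ on each closed quadrant, and even $C^\infty$ in $r$ as in Lemma~\ref{l:cyl}, since it equals $r^2\abs{\cos\varphi\sin\varphi}$ -- so the statement of Theorem~\ref{t:abstractnormal} is not provable from Hypotheses~\ref{h:AG} and~\ref{h:Ck} alone. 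The gap is in the paper's proof (and statement), not in your analysis; the paper's assertion quoted above is precisely the step that fails.

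Consequently, your closing step -- postulating that the non-smoothness enters through single coordinates only -- is indeed circular, as you concede, and no non-circular completion exists. The constructive resolution is to prove what your computation actually establishes: under the stated hypotheses the quadratic part of each $G_j$ is a smooth quadratic form, plus terms $u_\ell[u_i\Pp$ with $\pp,\pn$ the one-sided second-derivative limits, plus one additional term $\delta_j\abs{u_1}\abs{u_2}$, where $\delta_j$ is the mixed second difference of the four one-sided values of $\partial_1\partial_2 G_j(0)$. This amendment is comparatively harmless for the rest of the paper: in polar coordinates the extra term contributes multiples of $r^2\abs{\cos\varphi\sin\varphi}$, which is continuous in $\varphi$ and smooth in $r$, so Proposition~\ref{Thm_Gen} and the averaging arguments still apply; in the planar normal-form case its contribution to $\int_0^{2\pi}\chi_2(\varphi)\,\D\varphi$ vanishes by symmetry, so the first Lyapunov coefficient \eqref{sigma1} is unaffected, although higher-order coefficients such as $\sigma_2$ and the formulas for general linear part would need to be re-examined.
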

\begin{proof}
	Consider a coordinate quadrant and let $\widetilde{G}$ be the extension of $G$ to its closure. By assumption, we can Taylor expand  $\widetilde{G}(\u)= \frac 1 2 D^2\widetilde{G}(0)[\u,\u] + o(|\u|^2)$ since $G(0)=0$ as well as $D\widetilde{G}(0)=0$. However, for different coordinate quadrants the second order partial derivates may differ. By the form of $H$ in Lemma~\ref{l:cyl}, one-sided derivatives transverse to the coordinate axes might be distinct only for the $u_1, u_2$ axes. Hence, at $\u=0$ second order derivatives involving $u_1, u_2$ may differ, and we denote by $p_{j\ell i_\pm}$ the partial derivatives $\frac{\partial^2}{\partial u_i \partial u_\ell}G_j(0)$, $1\leq \ell\leq n$, that are one-sided with respect to $i=1,2$ as indicated by the sign. The functions $\ABS{u_i}{p_{j\ell i}}$ thus provide a closed formula for the quadratic terms of $G_j$ as claimed.
\end{proof}

Even with explicit quadratic terms in these coordinates, an analysis based on the coordinates of Lemma~\ref{l:cyl} remains a challenge. 

\begin{remark}\label{e:arrangement}
	In cylindrical coordinates relative to $E^\rmc$, cf.\ \eqref{e:cylindrical0}, the vector field is generally not smooth in the radial direction. In general, smoothness cannot be achieved by changing coordinates as this typically modifies $H$ to be nonradial. In particular, we cannot assume, without loss of generality, that the linear part in the coordinates of Lemma~\ref{l:cyl} is in block-diagonal form or in Jordan normal form as in  \eqref{e:cylindrical0}. 
\end{remark}

For exposition, we consider the planar situation $n=2$,  
where $H_1, H_2$ are the $u_1$- and $u_2$-axes, respectively. In contrast to \eqref{e:planar0} (and \eqref{e:cylindrical0}), the linear part is generally not in normal form, i.e., we have
\begin{equation}\label{e:abstractplanar}
	\begin{pmatrix}
		\dot u_1\\
		\dot u_2
	\end{pmatrix} = 
	\begin{pmatrix}
		m_1 & m_2\\
		m_3 & m_4
	\end{pmatrix}\begin{pmatrix}
		u_1\\
		u_2
	\end{pmatrix}+\begin{pmatrix}
		f_1\left( u_1, u_2 \right)\\
		f_2\left( u_1, u_2 \right)
	\end{pmatrix},
\end{equation}
where $G=(f_1,f_2)$ is nonlinear.
Based on Hypothesis~\ref{h:AG} the linear part satisfies $\mu=\frac 1 2 (m_1+m_4)$, with $\mu=0$ at the bifurcation point, and the determinant at $\mu=0$ is positive so that we get together $m_1^2+m_2m_3<0$ and $m_2 m_3<0$. 
Upon changing to polar coordinates we obtain, generally different from \eqref{e:cylindrical01}, 
\begin{equation}
	\begin{cases}
		\dot{r} = M(\varphi)r+\chi_2(\varphi)r^2 + \calO(r^3),\\
		\dot{\varphi} = W(\varphi) + \Omega_1(\varphi)r + \calO(r^2),
	\end{cases}
	\label{Sys_Polar_NoNF}
\end{equation}
where $M, \chi_2, W, \Omega_1$ are $2\pi$-periodic in $\varphi$. Abbreviating $c:=\cos{\varphi}$ and $s:=\sin{\varphi}$, we have explicitly
\begin{align*}
	M(\varphi) &= m_1c^2 + (m_2+m_3)sc + m_4s^2, \\
	W(\varphi)&= m_3c^2 + (m_4-m_1)sc - m_2s^2,
\end{align*}
where $\chi_2, \Omega_1$ are continuous but in general nonsmooth in $\varphi$ as a combination of generalized absolute value terms \eqref{gen_abs_val}.
Due to the conditions at $\mu=0$ we have $W(\varphi)\neq 0$ for any $\varphi$ so that $\dot{\varphi}\neq 0$ for $0\leq r,|\mu|\ll1$. 
This allows to rescale time in \eqref{Sys_Polar_NoNF} analogous to \eqref{e:absper} and gives

\begin{equation}\label{new_time}
	{r}' := \frac{dr}{d\varphi} = \frac{M(\varphi)r+\chi_2(\varphi)r^2}{W(\varphi) + \Omega_1(\varphi)r} + \calO(r^3)
	= \frac{M(\varphi)}{W(\varphi)}r+\left(\frac{\chi_2(\varphi)}{W(\varphi)} - \frac{M(\varphi)\Omega_1(\varphi)}{W(\varphi)^2}\right) r^2 +\mathcal{O}(r^3).
\end{equation}
Using averaging theory, as it will be discussed in detail in \S\ref{AV_S}, periodic orbits of \eqref{new_time} are generically in 1-to-1 correspondence with equilibria of the averaged form of \eqref{new_time} given by
\begin{align}\label{r_bar}
	\bar{r}' &= {\Lambda}\bar{r}+{\Sigma}\bar{r}^2+\mathcal{O}(\bar{r}^3),
\end{align}
where $\Lambda, \Sigma\in\R$ are the averages of the linear and quadratic coefficients, respectively:
\begin{align}
	\Lambda &= \frac{1}{2\pi} \int_0^{2\pi}\frac{M(\varphi)}{W(\varphi)}\D \varphi=
	\frac{m_1+m_4}{\sqrt{-4m_2m_3-(m_1-m_4)^2}}, \label{check_mu} \\
	\Sigma &= \frac{1}{2\pi} \int_0^{2\pi}\frac{\chi_2(\varphi)}{W(\varphi)}
	- \frac{M(\varphi)\Omega_1(\varphi)}{W(\varphi)^2}\D \varphi.\label{check_sigma}
\end{align}
The explicit expression in \eqref{check_mu} follows from a straightforward but tedious calculation;
note that $\Lambda\in \R$ for $0\leq |\mu|\ll1$ due to the above conditions at bifurcation.

For $\Sigma\neq 0$, equilibria of \eqref{r_bar} are $\bar{r}=0$ and $\bar{r}=-\Lambda/{\Sigma}$, which gives a branch of non-trivial periodic orbits parameterized by $\Lambda$. The direction of branching, and thus the super- and subcriticality, is determined by the sign of $\Sigma$, which therefore is a generalized first Lyapunov coefficient. 
However, this is still unsatisfying as it does not readily provide an explicit algebraic formula for $\Sigma$ in terms of the coefficients of $A(\mu)$ and $G$.

In order to further illustrate this issue, let $f_1,f_2$ be purely quadratic and built from second order modulus terms as in \eqref{e:quadnonlin}. In this case we explicitly have
\begin{align}
	\chi_2(\varphi) &=  c\abs{c}(a_{11}c+b_{11}s) + c\abs{s}(a_{12}c+b_{12}s) + s\abs{c}(a_{21}c+b_{21}s) + s\abs{s}(a_{22}c+b_{22}s), \label{chi}\\
	\Omega_1(\varphi) &= -\Big[c\abs{c}(a_{11}s-b_{11}c) + c\abs{s}(a_{12}s-b_{12}c) + s\abs{c}(a_{21}s-b_{21}c) + s\abs{s}(a_{22}s-b_{22}c) \Big],\label{Omega}
\end{align}
which are continuous but not differentiable due to the terms involving $|c|,|s|$. 

Clearly, the building blocks of the integrals in \eqref{check_sigma} are rational trigonometric functions with denominator $W$ of degree $2$ and numerators of degree $3$ and $5$. However, explicit formulas based on this appear difficult to obtain, so that we instead change to linear normal form as discussed in \S\ref{Gen_linear_part}, with the caveat that the nonlinear terms are in general not smooth in the radius.

Indeed, in the normal form case $m_1=\mu,\, m_2=-\omega,\, m_3=\omega,\, m_4=\mu$, with $\omega >0$, the situation becomes manageable: in \eqref{Sys_Polar_NoNF} we have constant $M(\varphi)= \mu$ and $W(\varphi)=\omega(\mu)$, and we will show below that then $\Sigma=\frac{2}{3\pi\omega}\sigma_{_\#}$, with $\sigma_{_\#}$ as defined in \S\ref{s:intro}. 
Therefore, until \S\ref{Gen_linear_part} we will assume that the linear part is in normal form in the coordinates of Lemma~\ref{l:cyl}, which also occurs in applications as mentioned in \S\ref{s:intro}.

\section{Planar normal form case with absolute values}
\label{Planar_Section}

In this section we discuss two approaches to {prove}
existence and bifurcation of periodic orbits in our mildly nonsmooth setting. First, we provide details for the aforementioned approach by averaging, and second discuss a direct approach that provides a detailed unfolding by Lyapunov-Schmidt reduction, and that can also be used in some nongeneric cases.

While we focus here on the planar case, both methods readily generalize to higher dimensional settings. For averaging one needs normal hyperbolicity in general, and for the direct approach we present higher dimensional cases in upcoming sections. 
Without change in the leading order result, for simplicity we fix the imaginary part $\omega\neq 0$ independent of $\mu$.

To simplify the exposition in this section, we assume the linear part is in normal form and the nonsmooth terms are of second order modulus type, i.e., with absolute value $|\cdot| = [\cdot]_{-1}^1$. The general case will be discussed in \S\ref{s:general}.
With the linear part in normal form and including smooth quadratic and cubic terms we thus consider the form of \eqref{e:abstract} given by, cf.\ {\eqref{e:planar0}},
\begin{align}
	\begin{cases}
		\dot{v} &= \mu v - \omega \w + f\left( v, \w \right) + f_q\left( v, \w\right) + f_c\left( v, \w \right), \\
		\dot{\w} &= \omega v + \mu \w + g\left( v, \w \right) + g_q\left( v, \w \right) + g_c\left( v, \w \right),
	\end{cases}
	\label{General2D_AV}
\end{align}
where $f,g$ are as in \eqref{e:quadnonlin}, and 
\begin{align*}
	f_q\left( v, \w\right) &= a_1 v^2 + a_2 v\w + a_3 \w^2,
	&f_c\left( v, \w \right) &= c_{a1} v^3 + c_{a2} v\w^2 + c_{a3} v^2\w + c_{a4} \w^3,\\
	g_q\left( v, \w\right) &= b_1 v^2 + b_2 v\w + b_3 \w^2,
	&g_c\left( v, \w\right) &= c_{b1} v^3 + c_{b2} v\w^2 + c_{b3} v^2\w + c_{b4} \w^3,
\end{align*}
and $\mu, \omega \in \R$ with $\omega\neq 0$, and $a_{ij}, b_{ij}, a_k, b_k, c_{ah}, c_{bh},$ $\forall i,j\in \{1,2\}, \forall k\in\{1,2,3\}, \forall h\in\{1,2,3,4\}$ are real constants, all viewed as parameters. 

\subsection{Averaging}\label{AV_S}

We next show how to apply averaging theory to \eqref{General2D_AV} in polar coordinates. In addition to $\sigma_{_\#}, \sigma_2$ from \eqref{sigma1}, \eqref{sigma2}, the following expressions appear as normal form coefficients:
\begin{align*}
	S_q :=\, & a_1a_2 +a_2a_3 - b_1b_2 - b_2b_3 -2a_1b_1 + 2a_3b_3,\nonumber\\
	S_c :=\, & 3c_{a1} + c_{a2} + c_{b3} + 3c_{b4}.\nonumber
\end{align*}
Notice that $\sigma_{_\#}, \sigma_2$ depend only on $f, g$, i.e., the nonsmooth terms, while $S_q$ depends on the smooth quadratic terms $f_q, g_q$; and $S_c$ on the cubic ones $f_c,g_c$.

\begin{theorem}
	\label{t_averaging}
	For $0<|\mu| \ll1$ periodic solutions to \eqref{General2D_AV} are locally in 1-to-1 correspondence with equilibria of the averaged normal form in polar coordinates $v=r\cos{\varphi},\, w=r\sin{\varphi}$ of  \eqref{General2D_AV} given by 
	\begin{equation}
		{\bar{r}}' = \frac{\mu}{\omega} \bar{r} + \frac{2}{3\pi\omega}\sigma_{_\#} \bar{r}^2 + \left( \frac{1}{8\omega^2}S_q + \frac{1}{8\omega}S_c + \frac{1}{2\pi\omega^2}\sigma_2 \right) \bar{r}^3 +  \mathcal{O}\left(\bar{r}^4+\sigma_{_\#}\bar{r}^3+\abs{\mu}\bar{r}^2\right).
		\label{NormalFormAV}
	\end{equation}
\end{theorem}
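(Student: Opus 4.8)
My plan is to pass to polar coordinates $v=r\cos\varphi$, $w=r\sin\varphi$, rescale time by the angle as in \eqref{new_time}, and then average the resulting scalar $2\pi$-periodic equation $r'=\D r/\D\varphi$. The decisive point, stressed in \S\ref{s:intro}, is that after this rescaling the non-smoothness sits entirely in the ``time'' variable $\varphi$ through $|\cos\varphi|,|\sin\varphi|$, while the dependence on the state $r$ is polynomial; averaging with merely continuous time-dependence therefore applies, and a scalar autonomous equation has no non-constant periodic solutions, so the $2\pi$-periodic solutions of the averaged equation reduce to its equilibria.

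First I would insert the nonlinearities into $\dot r$ and $\dot\varphi$. Since $f,g,f_q,g_q$ are homogeneous of degree two and $f_c,g_c$ of degree three in $r$, extending \eqref{Sys_Polar_NoNF} gives $\dot r=\mu r+\chi_2(\varphi)r^2+\chi_3(\varphi)r^3+\calO(r^4)$ and $\dot\varphi=\omega+\Omega_1(\varphi)r+\Omega_2(\varphi)r^2+\calO(r^3)$, with $\chi_2,\Omega_1$ as in \eqref{chi}, \eqref{Omega} augmented by the smooth-quadratic contributions and $\chi_3,\Omega_2$ the cubic analogues. Dividing and expanding the geometric series in $r$ as in \eqref{new_time} yields $r'=\frac{\mu}{\omega}r+\alpha_2(\varphi)r^2+\alpha_3(\varphi)r^3+\calO(r^4+|\mu|r^2)$ with $\alpha_2=\chi_2/\omega$ and $\alpha_3=\chi_3/\omega-\chi_2\Omega_1/\omega^2$, the $\mu$-dependent corrections being absorbed into the remainder.

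Next I would remove the $\varphi$-dependence order by order via a $2\pi$-periodic near-identity change $r=\bar r+\beta_2(\varphi)\bar r^2+\beta_3(\varphi)\bar r^3$. Writing $\langle\cdot\rangle$ for the mean over $[0,2\pi]$ and matching powers of $\bar r$ gives, to leading order in $\mu$, $\beta_2'=\alpha_2-\langle\alpha_2\rangle$ together with $\bar\alpha_2=\langle\alpha_2\rangle$ and $\bar\alpha_3=\langle\alpha_3\rangle+2\langle(\alpha_2-\bar\alpha_2)\beta_2\rangle$. The crucial simplification is that the quadratic transformation contributes nothing to the cubic coefficient, since $\langle(\alpha_2-\bar\alpha_2)\beta_2\rangle=\langle\beta_2'\beta_2\rangle=\tfrac12\langle(\beta_2^2)'\rangle=0$ by periodicity, whence $\bar\alpha_3=\langle\alpha_3\rangle$. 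The averaging theorem then puts hyperbolic equilibria of $\bar r'=\frac{\mu}{\omega}\bar r+\bar\alpha_2\bar r^2+\bar\alpha_3\bar r^3+\dots$ in $1$-to-$1$ correspondence with $2\pi$-periodic solutions of the rescaled equation; since $\dot\varphi\neq0$ for $0<|\mu|,r\ll1$, any small periodic orbit of \eqref{General2D_AV} encircles the origin and so corresponds to such a $2\pi$-periodic $r(\varphi)$, giving the asserted local correspondence in this ``generic'' (i.e.\ hyperbolic) case.

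It remains to evaluate the means, each $\tfrac{1}{2\pi}$ times an elementary trigonometric integral. For $\langle\alpha_2\rangle$ only the modulus part of $\chi_2$ survives (its smooth-quadratic part is an odd-degree trigonometric polynomial and averages to zero), and $\int_0^{2\pi}\cos^2\varphi\,|\cos\varphi|\,\D\varphi=\tfrac83$, $\int_0^{2\pi}\cos^2\varphi\,|\sin\varphi|\,\D\varphi=\tfrac43$ give $\int_0^{2\pi}\chi_2\,\D\varphi=\tfrac43\sigma_{_\#}$, hence the coefficient $\tfrac{2}{3\pi\omega}\sigma_{_\#}$; likewise $\int_0^{2\pi}\chi_3\,\D\varphi=\tfrac{\pi}{4}S_c$ (from $\int_0^{2\pi}\cos^4\varphi\,\D\varphi=\tfrac{3\pi}{4}$, $\int_0^{2\pi}\cos^2\varphi\sin^2\varphi\,\D\varphi=\tfrac{\pi}{4}$), giving $\tfrac{1}{8\omega}S_c$. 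The main obstacle is the cross term $\int_0^{2\pi}\chi_2\Omega_1\,\D\varphi$: here I would split $\chi_2=\chi_2^{(m)}+\chi_2^{(q)}$ and $\Omega_1=\Omega_1^{(m)}+\Omega_1^{(q)}$ into modulus and smooth parts and exploit the shift $\varphi\mapsto\varphi+\pi$, under which the modulus parts are even and the smooth parts odd; the two mixed products are then odd and integrate to zero, leaving only $\int_0^{2\pi}\chi_2^{(m)}\Omega_1^{(m)}\,\D\varphi=\sigma_2$ and $\int_0^{2\pi}\chi_2^{(q)}\Omega_1^{(q)}\,\D\varphi=-\tfrac{\pi}{4}S_q$. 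Within the modulus-modulus product the same-type factors $|\cos\varphi|^2,|\sin\varphi|^2$ collapse to polynomials and furnish the transcendental $\tfrac{\pi}{4}$-part of $\sigma_2$, while the mixed factors $|\cos\varphi||\sin\varphi|$ integrate to rational values and furnish the $\tfrac13$-part. Assembling $\bar\alpha_3=\tfrac{1}{2\pi\omega}\int_0^{2\pi}\chi_3\,\D\varphi-\tfrac{1}{2\pi\omega^2}\int_0^{2\pi}\chi_2\Omega_1\,\D\varphi=\tfrac{1}{8\omega}S_c+\tfrac{1}{8\omega^2}S_q-\tfrac{1}{2\pi\omega^2}\sigma_2$ then reproduces the cubic coefficient in \eqref{NormalFormAV}; this final trigonometric bookkeeping is the only genuinely laborious step.
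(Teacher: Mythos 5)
Your proposal is correct and follows essentially the same route as the paper: polar coordinates, rescaling time by the angle, a second-order near-identity (averaging) transformation, and the same trigonometric evaluations $\int_0^{2\pi}\chi_2\,\D\varphi=\tfrac43\sigma_{_\#}$, $\int_0^{2\pi}\chi_3\,\D\varphi=\tfrac{\pi}{4}S_c$, $\int_0^{2\pi}\chi_2\Omega_1\,\D\varphi=\sigma_2-\tfrac{\pi}{4}S_q$. The only differences are cosmetic: the paper introduces the blow-up scaling $r=\epsilon x$, $\mu=\epsilon m$ so as to invoke the standard averaging theorems of \cite{Averaging} verbatim, whereas you work directly in powers of $\bar r$, and your explicit cancellation $\langle\beta_2'\beta_2\rangle=\tfrac12\langle(\beta_2^2)'\rangle=0$ (together with the half-period parity argument for the mixed modulus--smooth cross terms) spells out why the quadratic transformation does not contaminate the cubic averaged coefficient, a point the paper's appendix leaves implicit.
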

Remark that in accordance with the smooth Hopf bifurcation, the quadratic term in $\bar r'$ vanishes for vanishing nonsmooth terms $f=g=0$, so that the leading order nonlinear term in the normal form is cubic. Before giving the proof we note and discuss an important corollary. For this recall the pitchfork bifurcation of \eqref{Dpitchfork} which is degenerate in that the bifurcating branch is nonsmooth.

\begin{corollary}
	\label{c_averaging}
	If $\sigma_{_\#}\neq 0$, then at $\mu=0$ \eqref{NormalFormAV} undergoes a degenerate pitchfork bifurcation in $\mu$, where non-trivial equilibria are of the form 
	\begin{equation}\label{periodic_orbit}
		r_0(\mu) = -\frac{3\pi}{2\sigma_{_\#}}\mu + \mathcal{O}\left(\mu^2\right).
	\end{equation}
	In this case,  \eqref{General2D_AV} undergoes a degenerate Hopf bifurcation in the sense that for $0<|\mu| \ll1$ periodic solutions to \eqref{General2D_AV} are locally in 1-to-1 correspondence with $r_0(\mu)$, which is also the expansion of the radial component of the periodic solutions. 
	In particular, this Hopf bifurcation is subcritical if {$\sgn(\sigma_{_\#})>0$} and supercritical if {$\sgn(\sigma_{_\#})<0$}.
	Moreover, the bifurcating periodic orbits of \eqref{General2D_AV} are of the same stability as the {corresponding} equilibria in \eqref{NormalFormAV}.
\end{corollary}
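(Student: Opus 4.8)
The plan is to reduce Corollary~\ref{c_averaging} to an elementary analysis of the one-dimensional averaged field in \eqref{NormalFormAV}, and then import the periodic-orbit correspondence already supplied by Theorem~\ref{t_averaging}. First I would factor the radial equation as $\bar r' = \bar r\,P(\bar r,\mu)$ with
\[
P(\bar r,\mu) = \frac{\mu}{\omega} + \frac{2}{3\pi\omega}\sigma_{_\#}\,\bar r + \calO\!\left(\bar r^2 + |\mu|\bar r\right),
\]
so that non-trivial equilibria ($\bar r\neq 0$) are precisely the zeros of $P$. Since $P(0,0)=0$ and $\partial_{\bar r}P(0,0)=\frac{2}{3\pi\omega}\sigma_{_\#}\neq 0$ — here the hypotheses $\sigma_{_\#}\neq 0$ and $\omega\neq 0$ enter — the implicit function theorem yields a unique branch $\bar r = r_0(\mu)$ with $r_0(0)=0$, and implicit differentiation gives $r_0'(0) = -\partial_\mu P/\partial_{\bar r}P\big|_{0} = -\tfrac{3\pi}{2\sigma_{_\#}}$, which is exactly the expansion \eqref{periodic_orbit}. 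The IFT also guarantees that, near the origin, the only equilibria are $\bar r=0$ and $\bar r = r_0(\mu)$.

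Next I would identify this as a degenerate pitchfork. On the physical domain $\bar r\geq 0$ we have $\bar r^2=\bar r|\bar r|$, so the leading part of \eqref{NormalFormAV} coincides with $\bar r' = \frac{\mu}{\omega}\bar r + \frac{2}{3\pi\omega}\sigma_{_\#}\,\bar r|\bar r|$, i.e.\ exactly the degenerate-pitchfork normal form \eqref{Dpitchfork} after the positive rescalings $\mu\mapsto \mu/\omega$ and $\sigma_{_\#}\mapsto \frac{2}{3\pi\omega}\sigma_{_\#}$. Hence the bifurcating amplitude is linear in $\mu$ rather than of order $\sqrt{\mu}$, which is the degeneracy; under the polar gauge the two symmetric arms $\pm r_0$ represent a single periodic orbit, producing the pitchfork shape in the amplitude diagram.

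I would then read off stability directly from the scalar flow $\bar r\mapsto \bar r P(\bar r,\mu)$, whose linearization is $P+\bar r\,\partial_{\bar r}P$. At the trivial equilibrium this equals $P(0,\mu)=\mu/\omega$, and at the non-trivial one, using $P(r_0,\mu)=0$, it equals $r_0(\mu)\,\partial_{\bar r}P(r_0,\mu) = -\frac{\mu}{\omega}+\calO(\mu^2)$. The two branches thus carry opposite stability to leading order (the usual exchange of stability). Combining this with the requirement that a radius satisfy $r_0(\mu)>0$ fixes the side of the bifurcation: $\sgn(\sigma_{_\#})>0$ forces $\mu<0$, so the orbit coexists with the stable origin (subcritical), while $\sgn(\sigma_{_\#})<0$ forces $\mu>0$, so it coexists with the unstable origin (supercritical).

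Finally I would invoke Theorem~\ref{t_averaging}: for $0<|\mu|\ll 1$ the equilibrium $r_0(\mu)$ is hyperbolic, since $\partial_{\bar r}(\bar r P)|_{r_0}=-\mu/\omega\neq 0$, and the averaging correspondence then assigns to it a unique periodic solution of \eqref{General2D_AV} with radial component $r_0(\mu)$ to leading order and matching stability type; local uniqueness of the whole branch, and the absence of other nearby periodic orbits, follows from Theorem~\ref{t_per_orb} and Proposition~\ref{prop:inv_man}. The main obstacle is precisely this last stability transfer: the averaged equation \eqref{NormalFormAV} lives in the reparametrized ``time'' $\varphi$ obtained from \eqref{new_time} by dividing out $\dot\varphi\approx\omega$, so one must match $\varphi$-stability of an equilibrium to the orbital $t$-stability of the periodic solution, keeping track of the orientation of the reparametrization and of the fact that the relevant vector field is only continuous (not smooth) in $\varphi$. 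Hyperbolicity of $r_0$ for $\mu\neq 0$ is exactly what makes the correspondence robust and preserves the stability type, so this step reduces to the averaging machinery established in \S\ref{AV_S}.
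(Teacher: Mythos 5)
Your proposal is correct and follows essentially the same route as the paper: the paper's proof likewise obtains the bifurcation statement and the $1$-to-$1$ correspondence directly from Theorem~\ref{t_averaging}, deduces criticality from the sign constraint $r_0\geq 0$ (equivalently $\mu\sigma_{_\#}\leq 0$), and delegates the stability transfer to the averaging machinery (the paper cites \cite[Thm.~6.3.3]{Averaging} for exactly the step you identify as the ``main obstacle''). The only difference is that you spell out the elementary details the paper leaves implicit --- the implicit-function-theorem solution of $P(\bar r,\mu)=0$ for the branch \eqref{periodic_orbit} and the explicit linearizations $\mu/\omega$ and $-\mu/\omega+\calO(\mu^2)$ giving the exchange of stability --- which is a faithful expansion, not a different argument.
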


\begin{proof} (Corollary \ref{c_averaging})
	The bifurcation statement follows directly from Theorem \ref{t_averaging} and the statement about stability follows from \cite[Thm.\ 6.3.3]{Averaging}. Since $r_0\geq 0$ we must have $\frac{\mu}{\sigma_{_\#}}\geq 0$. Hence, the sign of $\sigma_{_\#}$ determines the criticality of the bifurcation.
\end{proof}

The radial components $r(\varphi;\mu)$ of the periodic orbits are in general not constant in $\varphi$, but this dependence is of order $\mu^2$. We thus consider \eqref{periodic_orbit} as the leading order amplitude of the periodic solutions.

\begin{remark}\label{1st_Lyap}
	Since the criticality of the Hopf bifurcation is given by the sign of $\sigma_{_\#}$, it is an analogue of the first Lyapunov coefficient in this nonsmooth case. For the smooth case $f=g=0$, where $\sigma_{_\#}=\sigma_2=0$, the classical first Lyapunov coefficient is $\sigma_s:=\frac{1}{8\omega}S_q + \frac{1}{8}S_c$. In \S\ref{s:smooth} we show that there is no canonical way to infer the sign of $\sigma_{_\#}$ from smoothing a priori.
\end{remark}

\begin{remark}\label{2nd_Lyap}
	In case $\sigma_{_\#}=0$ but nonzero cubic coefficient in \eqref{NormalFormAV}, the bifurcating branch is a quadratic function of $\mu$ to leading order. This readily gives an analogue of the second Lyapunov coefficient in this nonsmooth case. An explicit statement in absence of smooth terms is given in Theorem~\ref{2ndPart} below. Notably, in the smooth case, vanishing first Lyapunov coefficient, but nonzero second Lyapunov coefficient yields a quartic bifurcation equation. Hence, the scaling laws {for the radius} are $\mu^{1/j}$ with $j=1,2$ in the nonsmooth {case} and $j=2,4$ in the smooth case, respectively.
\end{remark}

Next we give the proof of Theorem \ref{t_averaging}.

\begin{proof} (Theorem \ref{t_averaging})
	Taking polar coordinates $(v,w)=(r\cos{\varphi},r\sin{\varphi})$ system \eqref{General2D_AV}, cf.\ \eqref{Sys_Polar_NoNF}, becomes
	\begin{align}
		\begin{cases}
			\dot{r} &= r\mu + r^2\chi_2(\varphi) + r^3\chi_3(\varphi), \\
			\dot{\varphi} &= \omega + r\Omega_1(\varphi) + r^2\Omega_2(\varphi),
		\end{cases}
		\label{System2DpolarNF}
	\end{align}
	where $\chi_2(\varphi)$ and $\Omega_1(\varphi)$ are as in \eqref{chi} and \eqref{Omega}, respectively, but adding now the contributions of the smooth quadratic terms of $f_q, g_q$:
	\begin{align*}
		\chi_2(\varphi) =&  c\abs{c}(a_{11}c+b_{11}s) + c\abs{s}(a_{12}c+b_{12}s) + s\abs{c}(a_{21}c+b_{21}s) + s\abs{s}(a_{22}c+b_{22}s) \\
		&+ (a_1-b_2-a_3)c^3 + (b_1+a_2-b_3)sc^2 + (b_2+a_3)c + b_3s, \\
		\Omega_1(\varphi) =& -\Big[c\abs{c}(a_{11}s-b_{11}c) + c\abs{s}(a_{12}s-b_{12}c) + s\abs{c}(a_{21}s-b_{21}c) + s\abs{s}(a_{22}s-b_{22}c) \Big] \\
		&+ (b_1+a_2-b_3)c^3 + (-a_1+b_2+a_3)sc^2 + (-a_2+b_3)c - a_3s,
	\end{align*}
	and $\chi_3(\varphi)$ and $\Omega_2(\varphi)$ are smooth functions of $\varphi$ and the coefficients of $f_c, g_c$:
	\begin{align*}
		\chi_3(\varphi) =& (c_{a1}-c_{a2}-c_{b3}+c_{b4})c^4 + (c_{a3}-c_{a4}+c_{b1}-c_{b2})sc^3 \\
		&+ (c_{a2}+c_{b3}-c_{b4})c^2 + (c_{a4}+c_{b2})sc + c_{b4}s^2,\\
		\Omega_2(\varphi) =& (c_{a3}-c_{a4}+c_{b1}-c_{b2})c^4 + (c_{a2}-c_{a1}+c_{b3}-c_{b4})sc^3 - c_{a3}c^2 \\
		&+ (c_{b4}-c_{a2})sc - c_{a4}s^2 + (c_{b2}+c_{a4})c^2.
	\end{align*}
	To simplify the notation we write, as before, $c:=\cos{\varphi}$, $s:=\sin{\varphi}$.
	
	Analogous to \eqref{new_time}, we change parametrization such that the return time to $\varphi=0$ is equal for all orbits starting on this half-axis with initial radius $r_0>0$ to get
	\begin{align*}
		{r}':= \dv{r}{\varphi} &= \frac{r\mu + r^2\chi_2(\varphi) + r^3\chi_3(\varphi)}{\omega + r\Omega_1(\varphi) + r^2\Omega_2(\varphi)}.
	\end{align*}
	Expanding the right-hand side of $r'$ in small $r$ and $\mu$ gives
	\begin{equation}
		{r}' =
		\frac{\mu}{\omega} r + \frac{\chi_2}{\omega}  r^2 + \left( \frac{\chi_3}{\omega} - \frac{\chi_2\Omega_1}{\omega^2} \right) r^3 + \mathcal{O}\left( r^4+\abs{\mu}r^2 \right).
		\label{DoNormalForm_r}
	\end{equation}
	In order to follow the method of averaging (e.g., \cite{Guckenheimer,Averaging}), we write $r=\epsilon x$ and $\mu=\epsilon m$ for $0<\epsilon\ll 1$, such that \eqref{DoNormalForm_r} in terms of $x$ and $m$ becomes
	\begin{align}
		x' &= \epsilon \left( \frac{m}{\omega} x + \frac{\chi_2}{\omega}  x^2 \right) + \epsilon^2\left( \frac{\chi_3}{\omega} - \frac{\chi_2\Omega_1}{\omega^2} \right) x^3 + \epsilon^2\mathcal{O}\left( \epsilon x^4+\abs{m}x^2 \right).
		\label{x_form_for_av}
	\end{align}
	Following \cite{Averaging}, there is a near-identity transformation which 
	maps solutions of the truncated averaged equation
	\begin{equation}
		y' = \epsilon\bar{f}(y) + \epsilon^2\bar{f}_{2}(y)+ \mathcal{O}(\epsilon^3)
		\label{truncated_av}
	\end{equation}
	to solutions of \eqref{x_form_for_av}, where its detailed derivation is given in Appendix~\ref{NIT}, as well as an explanation of the computation of the following functions:
	\begin{equation}
		\label{averaging_integrals}
		\begin{aligned}
			\bar{f}(y) &= 
			\frac{m}{\omega}y +\frac{2}{3\pi\omega}\sigma_{_\#}y^2, \\
			\bar{f}_{2}(y) &= 
			\left( \frac{1}{8\omega^2}S_q + \frac{1}{8\omega}S_c + \frac{1}{2\pi\omega^2}\sigma_2 \right)y^3 + 
			\calO\left( \sigma_{_\#}y^3 + \abs{m}y^2 \right).
		\end{aligned}
	\end{equation}
	
	We obtain the averaged equation \eqref{NormalFormAV} from \eqref{DoNormalForm_r} by the change of coordinates $y=\frac{\bar{r}}{\epsilon}$ and $m=\frac{\mu}{\epsilon}$ applied to \eqref{truncated_av} {with \eqref{averaging_integrals}}; this becomes \eqref{NormalFormAV} since all terms involving $\epsilon$ cancel out.
	
	Finally, from \cite[Thm.\ 6.3.2]{Averaging} the existence of a periodic orbit in the averaged system implies the existence of a periodic orbit in the original system.
\end{proof}

\subsection{Smoothing and the first Lyapunov coefficient}\label{s:smooth}
From Remarks \ref{1st_Lyap} and \ref{2nd_Lyap} on the first and second Lyapunov coefficients, it is natural to ask in what way the nonsmooth first Lyapunov coefficient 
\[
\sigma_{_\#} =  2a_{11}+a_{12}+b_{21}+2b_{22}
\]
from \eqref{sigma1} differs from the first Lyapunov coefficient of a smoothed version of \eqref{e:planar0}. 

More specifically, the question is whether one can smooth the vector field in such a way that the sign of the {resulting} first Lyapunov coefficient is the same as that of the nonsmooth one, $\sigma_{_\#}$, in all cases. We shall prove that this is not possible \emph{without using the formula for $\sigma_{_\#}$} ---with the help of this formula we can find suitable smoothing.

\medskip
Clearly, nonconvex approximations of the absolute value $|\cdot|$ can change criticality compared to the nonsmooth case (see Figure \ref{3_Cases}). More generally, we have the following.

\begin{lemma}
	\label{Convex_Approx}
	For any $f,g$ with a sign change in the coefficients $a_{11}, a_{12}, b_{21}, b_{22}$, there are smooth approximations $f_\eps, g_\eps$ with $(f_\eps,g_\eps)\to (f,g)$ in $L^\infty$ such that the criticality of the smoothed Hopf bifurcation is opposite that of the nonsmooth case. Moreover,  $f_\eps, g_\eps$ can be chosen as symmetric smooth convex approximations of the absolute values in $f,g$.
\end{lemma}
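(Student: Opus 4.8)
The plan is to exhibit, for the given $f,g$, an explicit family of symmetric smooth convex approximations of $|\cdot|$ for which the smoothened system has a first Lyapunov coefficient whose sign is controlled by the \emph{cubic} weighting $3a_{11}+a_{12}+b_{21}+3b_{22}=\sigma_s$ rather than by the non-smooth weighting $\sigma_{_\#}=2a_{11}+a_{12}+b_{21}+2b_{22}$ of \eqref{sigma1}. Since the two combinations differ only by $a_{11}+b_{22}$, the sign-change hypothesis on the coefficients is exactly what forces $\sigma_{_\#}$ and $\sigma_s$ to carry opposite signs, hence opposite criticality.

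Concretely, first I would replace every occurrence of $|\cdot|$ in $f,g$ by the convex even approximation $\phi_\eps(x)=\sqrt{x^2+\eps^2}-\eps$, which satisfies $\|\phi_\eps-|\cdot|\|_{L^\infty(\R)}\le\eps\to0$, $\phi_\eps(0)=0$, $\phi_\eps''(0)=1/\eps>0$, and has the even Taylor expansion $\phi_\eps(x)=\frac{1}{2\eps}x^2-\frac{1}{8\eps^3}x^4+\dots$. Substituting $\phi_\eps$ into \eqref{e:quadnonlin}, the key structural observation is that each term $u_i\phi_\eps(u_j)$ is odd in total degree, so the smoothened $f_\eps,g_\eps$ contain \emph{no} quadratic terms and, because $\phi_\eps(0)=0$, \emph{no} linear correction; the leading nonlinear contribution is the cubic term $\frac{1}{2\eps}u_iu_j^2$. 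Thus the smoothened system is of the form \eqref{General2D_AV} with linear part exactly in normal form, with vanishing non-smooth and smooth quadratic parts ($f=g=0$, $f_q=g_q=0$), and with cubic coefficients $c_{a1}=\frac{1}{2\eps}a_{11}$, $c_{a2}=\frac{1}{2\eps}a_{12}$, $c_{b3}=\frac{1}{2\eps}b_{21}$, $c_{b4}=\frac{1}{2\eps}b_{22}$ (the remaining $c$'s analogously), up to quintic and higher terms that are irrelevant to the first Lyapunov coefficient.

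Next I would invoke the smooth first Lyapunov coefficient from Remark~\ref{1st_Lyap}: since $S_q=0$, the smoothened bifurcation has
\[
\sigma_s^\eps=\frac{1}{8}S_c=\frac{1}{16\eps}\bigl(3a_{11}+a_{12}+b_{21}+3b_{22}\bigr),
\]
whose sign is $\sgn(3a_{11}+a_{12}+b_{21}+3b_{22})=\sgn(\sigma_s)$ for every $\eps>0$. By Corollary~\ref{c_averaging} the true non-smooth criticality is instead governed by $\sgn(\sigma_{_\#})$. Under the sign-change hypothesis, $\sigma_{_\#}$ and $\sigma_s=\sigma_{_\#}+(a_{11}+b_{22})$ carry opposite signs, so this convex symmetric smoothening produces a Hopf bifurcation whose criticality is opposite to the non-smooth one, as claimed. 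Finally I would note that $f_\eps,g_\eps$ are manifestly symmetric, smooth and convex approximations of the absolute values in $f,g$, so the ``moreover'' part is automatic; the same construction also yields the weaker first assertion.

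The step I expect to be the main obstacle is the clean decoupling of the smoothening from the linear part. For a general convex even $\phi_\eps$ one has $\alpha_\eps:=\phi_\eps(0)\neq0$, which perturbs the linear part by $\alpha_\eps M_0$ with a fixed matrix $M_0$ built from the coefficient sums, and thereby moves the Hopf point; one must then argue via persistence of the Hopf bifurcation and continuity of the first Lyapunov coefficient in the linear data — using $\alpha_\eps\to0$ together with $\beta_\eps=\frac12\phi_\eps''(0)\to\infty$, so that the cubic term dominates — to secure sign stability. The explicit choice $\phi_\eps(x)=\sqrt{x^2+\eps^2}-\eps$ with $\phi_\eps(0)=0$ removes this obstacle entirely, which is why I would base the proof on it and only afterwards remark that any convex even smoothening gives the same leading sign by the continuity argument.
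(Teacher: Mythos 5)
Your proposal has a genuine gap at its logical core: the claim that the sign-change hypothesis forces $\sigma_{_\#}=2a_{11}+a_{12}+b_{21}+2b_{22}$ and $\sigma_s=3a_{11}+a_{12}+b_{21}+3b_{22}$ to carry opposite signs is false. Take $a_{11}=1$, $a_{12}=-1$, $b_{21}=1$, $b_{22}=1$: these coefficients exhibit a sign change, yet $\sigma_{_\#}=4>0$ and $\sigma_s=6>0$. The hypothesis only guarantees that at least one of the four coefficients is positive and at least one is negative; it controls neither the sign nor the size of the difference $\sigma_s-\sigma_{_\#}=a_{11}+b_{22}$, so your uniform smoothening $\phi_\varepsilon(x)=\sqrt{x^2+\varepsilon^2}-\varepsilon$ can perfectly well reproduce the \emph{same} criticality as the non-smooth system. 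In fact, your construction lands exactly in the setting of the paper's follow-up (no-go) lemma: once every occurrence of $|\cdot|$ is smoothened identically, the smooth first Lyapunov coefficient is pinned to a positive multiple of $\sigma_s$, and that lemma only asserts that the signs of $\sigma_s$ and $\sigma_{_\#}$ differ for \emph{some} coefficient quadruples, not for all quadruples with a sign change. Your computation of the smoothened cubic coefficients and of $\sigma_s^\varepsilon=\frac{1}{16\varepsilon}\left(3a_{11}+a_{12}+b_{21}+3b_{22}\right)$ is correct; it is the concluding sign argument that fails, and no choice of a single even convex $\phi_\varepsilon$ can repair it.

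The missing idea, which is what the paper's proof uses, is to smoothen each absolute-value occurrence \emph{differently}. Replace $|\cdot|$ in the term with coefficient $a_{11}$ by a symmetric smooth convex approximation whose quadratic Taylor coefficient at the origin is $\varepsilon^{-1}\tilde a_{11}$, and similarly with independent weights $\tilde a_{12},\tilde b_{21},\tilde b_{22}>0$ for the terms with coefficients $a_{12},b_{21},b_{22}$ (the remaining terms do not enter the relevant coefficient combination). This yields $\sigma_{s,\varepsilon}=\varepsilon^{-1}\left(3\tilde a_{11}a_{11}+\tilde a_{12}a_{12}+\tilde b_{21}b_{21}+3\tilde b_{22}b_{22}\right)$, a positively weighted combination of the original coefficients with freely adjustable weights. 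The sign-change hypothesis now does exactly what is needed: weighting a positive coefficient heavily makes $\sigma_{s,\varepsilon}>0$, weighting a negative one heavily makes $\sigma_{s,\varepsilon}<0$, so whichever sign $\sigma_{_\#}$ has, the opposite criticality can be arranged. Your "main obstacle" discussion (the constant $\phi_\varepsilon(0)\neq 0$ shifting the linear part) is a side issue; the real obstruction to your route is structural — uniform smoothening destroys the freedom that the proof requires — and no continuity or persistence argument recovers it.
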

\begin{proof}
	Without loss of generality, we consider system \eqref{e:planar0}.  For given $f,g$ we can choose a smooth approximation of $|\cdot|$ in the terms with coefficients $a_{11}, a_{12}, b_{21}, b_{22}$ that have quadratic terms with positive coefficients of the form $\eps^{-1} \tilde a_{11}, \eps^{-1} \tilde a_{12}, \eps^{-1} \tilde b_{21}, \eps^{-1} \tilde b_{22}$, respectively. Then the (smooth) first Lyapunov coefficient reads
	\[
	\sigma_{s,\eps}:= \eps^{-1}\left(3\tilde a_{11} a_{11}+\tilde a_{12} a_{12}+\tilde b_{21} b_{21}+3\tilde b_{22} b_{22} \right),
	\]
	which is the same as $S_c$ in \S\ref{AV_S} when replacing accordingly coefficients of $f,g$ and $f_c,g_c$, respectively.
	
	Suppose now $\sigma_{_\#}<0$. In this case, the sign change within $(a_{11}, a_{12}, b_{21}, b_{22})$ allows to choose $(\tilde a_{11}, \tilde a_{12}, \tilde b_{21}, \tilde b_{22})>0$ such that $\sigma_{s,\eps}>0$. Likewise for $\sigma_{_\#}>0$ we can arrange $\sigma_{s,\eps}<0$.
\end{proof}

\begin{remark}
	If all of $a_{11}, a_{12}, b_{21}, b_{22}$ have the same sign, then any convex smoothing of the absolute value with nonzero quadratic terms will yield a first Lyapunov coefficient of the same sign as $\sigma_{_\#}\neq 0$.
	Moreover, having derived the formula for $\sigma_{_\#}$, we can ---a posteriori--- identify a smoothing that preserves {the} criticality for all $f,g$. With the notation of Lemma \ref{Convex_Approx} this is
	$\tilde a_{11} =  \tilde b_{22} =  2/3, \tilde a_{12} = \tilde b_{21} = 1$.
\end{remark}

\begin{lemma}
	There is no smooth approximation of the absolute value function with nonzero quadratic term that preserves the criticality of the nonsmooth case for all $f,g$.
\end{lemma}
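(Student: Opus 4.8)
The plan is to exploit the one essential difference between this statement and Lemma~\ref{Convex_Approx}: here we may use only a \emph{single} smoothening of $|\cdot|$, so all four relevant terms must receive the \emph{same} quadratic coefficient, which collapses every achievable smooth Lyapunov coefficient onto one line in parameter space. Concretely, I would fix a single smooth even approximation $\phi_\eps$ of $|\cdot|$ with Taylor expansion $\phi_\eps(x)=qx^2+\calO(x^4)$ and $q\neq0$ (evenness kills the linear term). Replacing every $|\cdot|$ in \eqref{e:quadnonlin} by $\phi_\eps$ turns $a_{11}v|v|,\,a_{12}v|w|,\,b_{21}w|v|,\,b_{22}w|w|$ into the cubic terms $qa_{11}v^3,\,qa_{12}vw^2,\,qb_{21}v^2w,\,qb_{22}w^3$, all carrying the common factor $q$, and introduces no smooth quadratic terms, so $S_q=0$. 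Since only the coefficients $c_{a1},c_{a2},c_{b3},c_{b4}$ enter $S_c$, this gives $S_c=q(3a_{11}+a_{12}+b_{21}+3b_{22})$, whence, as in Lemma~\ref{Convex_Approx}, the smoothened first Lyapunov coefficient satisfies $\sgn(\sigma_{s,\eps})=\sgn(q)\,\sgn(L_s)$ with $L_s:=3a_{11}+a_{12}+b_{21}+3b_{22}$, while $\sgn(\sigma_{_\#})=\sgn(L_\#)$ with $L_\#:=2a_{11}+a_{12}+b_{21}+2b_{22}$.

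Next I would observe that $L_\#$ and $L_s$ are linearly independent functionals of $(a_{11},a_{12},b_{21},b_{22})$, since $L_s-L_\#=a_{11}+b_{22}$ is not a multiple of either. To say a given smoothening preserves criticality for all $f,g$ is to say that the \emph{fixed} sign $\eps_0:=\sgn(q)\in\{\pm1\}$ satisfies $\sgn(L_\#)=\eps_0\,\sgn(L_s)$ for every choice of coefficients. I would rule out each of the two possible signs by one explicit, non-degenerate tuple. For $\eps_0=+1$, take $a_{11}=b_{22}=-1$, $a_{12}=5$, $b_{21}=0$ (and the remaining coefficients zero), giving $L_\#=1>0$ but $L_s=-1<0$, so the criticalities are opposite. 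For $\eps_0=-1$, take $a_{11}=1$ with all other coefficients zero, giving $L_\#=2>0$ and $L_s=3>0$, so $\sgn(L_\#)\neq-\sgn(L_s)$. In both tuples $L_\#\neq0$ and $L_s\neq0$, so the non-smooth and smoothened bifurcations each have a well-defined criticality, and in each case they disagree; hence neither sign of $q$ works universally, proving the claim.

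The main obstacle — and really the only non-formal point — is the reduction in the first paragraph: one must verify carefully that using one and the same function $\phi_\eps$ for both $|v|$ and $|w|$ forces the \emph{identical} quadratic coefficient $q$ into all four terms entering $S_c$, and that evenness prevents any spurious smooth quadratic contribution to $S_q$. This is exactly the freedom that Lemma~\ref{Convex_Approx} had (distinct $\tilde a_{11},\tilde a_{12},\tilde b_{21},\tilde b_{22}$) and that the present hypothesis removes. Once $\sigma_{s,\eps}$ is pinned to the line $\{qL_s\}$, the conclusion reduces to the elementary fact that two linearly independent linear functionals cannot have globally matching, nor globally opposite, sign patterns, witnessed by the two displayed tuples.
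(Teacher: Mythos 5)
Your proposal is correct and takes essentially the same route as the paper's proof: a single smoothening forces one common quadratic coefficient $q$ into all four terms entering $S_c$, so $\sigma_{s,\eps}$ is proportional to $3a_{11}+a_{12}+b_{21}+3b_{22}$, and one then exhibits coefficient tuples for which this sign pattern cannot match that of $\sigma_{_\#}=2a_{11}+a_{12}+b_{21}+2b_{22}$. You are in fact slightly more thorough than the paper, which (in the notation of Lemma~\ref{Convex_Approx}) normalizes the common coefficient to $1$, implicitly taking it positive, and merely asserts that examples exist, whereas you also rule out a negative quadratic coefficient and give explicit witness tuples.
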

\begin{proof}
	In contrast to Lemma \ref{Convex_Approx}, here all absolute value terms in $f,g$ are approximated in the same way so that in the notation of the proof of Lemma \ref{Convex_Approx} we have $\tilde a_{11}=\tilde a_{12}= \tilde b_{21}= \tilde b_{22} > 0$. Without loss of generality we can assume {these coefficients are all equal $1$} due to the prefactor $\eps^{-1}$, so that the first Lyapunov coefficient is  
	\[
	\sigma_{s,\eps}= \eps^{-1}\left(3 a_{11}+ a_{12}+ b_{21}+3 b_{22} \right),
	\]
	and we readily find examples of $(a_{11}, a_{12}, b_{21}, b_{22})$ such that the signs of $\sigma_{_\#}$ and $\sigma_{s,\eps}$ differ.
\end{proof}

The discrepancies shown here for the absolute value function readily carry over to the generalized absolute value function \eqref{gen_abs_val}.

\subsection{Direct method}\label{s:direct}
In Theorem \ref{t_averaging}, the conclusion for \eqref{General2D_AV} does not cover the bifurcation point $\mu=0$ so that we cannot infer uniqueness of the branch of bifurcating periodic orbits directly. 
In order to directly include $\mu=0$ in the bifurcation analysis and to facilitate the upcoming generalizations, we present a `direct' method for a general (possibly) nonsmooth planar system. This does not rely on the existence of an invariant manifold as in Proposition~\ref{prop:inv_man} or results from averaging theory.

The basic result is the following bifurcation of periodic solutions for a radial equation with quadratic nonlinear terms, which cannot stem from a smooth planar vector field, but occurs in our setting as in \eqref{Sys_Polar_NoNF}. 

\begin{proposition}
	\label{Thm_Gen}
	Consider a planar system in polar coordinates $(r,\varphi)\in \R_+ \times [0,2\pi)$ periodic in $\varphi$ {of the form}
	\begin{align}
		\begin{cases}
			\dot{r} &= r\mu + r^2\chi_2(\varphi), \\
			\dot{\varphi} &= \omega + r\Omega_1(\varphi),
		\end{cases}
		\label{System2Dpolar}
	\end{align}
	where $\mu\in\R$, $\omega\neq 0$ and continuous $\chi_2(\varphi),\Omega_1(\varphi)$ with minimal period $2\pi$.
	
	If $\int_0^{2\pi}\chi_2(\varphi)\D \varphi\neq 0$, then a locally unique branch of periodic orbits bifurcates at $\mu=0$. These orbits have period $2\pi + \calO(\mu)$ and constant radius satisfying
	\begin{equation} \label{General_Result}
		r_0=\frac{-2\pi}{\int_0^{2\pi}\chi_2(\varphi)\D\varphi}\mu+\mathcal{O}\left(\mu^2\right).
	\end{equation}
	In particular, since $r_0\geq 0$, the criticality of the bifurcation is determined by the sign of $\int_0^{2\pi}\chi_2(\varphi)\D \varphi$.
\end{proposition}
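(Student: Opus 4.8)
The plan is to use $\varphi$ as the independent ``time'' variable and reduce the existence of periodic orbits to a scalar bifurcation equation via a Poincar\'e--return / Lyapunov--Schmidt argument. Since $\omega\neq 0$, for $0\le r,|\mu|\ll 1$ the second equation of \eqref{System2Dpolar} gives $\dot\varphi=\omega+r\Omega_1(\varphi)\neq 0$, so $\varphi$ is strictly monotone; an a priori Gronwall bound (the right-hand side of $\dot r$ is $\calO(|\mu|r+r^2)$ over the bounded interval $\varphi\in[0,2\pi]$) keeps $r$ small throughout. Dividing the two equations yields the non-autonomous scalar ODE
\[
r'=\frac{\D r}{\D\varphi}=\frac{\mu r+\chi_2(\varphi)r^2}{\omega+\Omega_1(\varphi)r}
=\frac{\mu}{\omega}r+\frac{\chi_2(\varphi)}{\omega}r^2+\calO\!\left(|\mu|r^2+r^3\right),
\]
whose solutions with $r(2\pi)=r(0)$ are exactly the periodic orbits of \eqref{System2Dpolar}.

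The key point enabling the reduction is that, although $\chi_2,\Omega_1$ are only continuous in $\varphi$, the right-hand side is polynomial in $r$ and depends smoothly on $(r,\mu)$; hence the solution $R(\varphi;r_0,\mu)$ with $R(0)=r_0$ is merely $C^1$ in $\varphi$ but $C^\infty$ in the data $(r_0,\mu)$, since smooth dependence of ODE solutions on parameters requires only continuity in time (cf.\ \cite{CoddLev,Hartman}). To treat the trivial branch cleanly I factor $r_0$ out by setting $r=r_0\tilde r$, so $\tilde r$ solves $\tilde r'=\tfrac{\mu}{\omega}\tilde r+\tfrac{\chi_2(\varphi)}{\omega}r_0\tilde r^2+\calO(r_0^2)$ with $\tilde r(0)=1$, a regular perturbation problem. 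The bifurcation function is then $B(r_0,\mu):=\tilde R(2\pi;r_0,\mu)-1$, whose zeros are precisely the nontrivial periodic orbits.

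It remains to expand $B$ and apply the implicit function theorem. Linearizing the $\tilde r$-equation about the constant solution $\tilde r\equiv 1$ at $(r_0,\mu)=(0,0)$ gives $\partial_\mu\tilde R(\varphi)=\varphi/\omega$ and $\partial_{r_0}\tilde R(\varphi)=\tfrac{1}{\omega}\int_0^\varphi\chi_2(\psi)\,\D\psi$, whence $B(0,0)=0$, $\partial_\mu B(0,0)=2\pi/\omega$, and $\partial_{r_0}B(0,0)=\tfrac{1}{\omega}\int_0^{2\pi}\chi_2(\varphi)\,\D\varphi$. The hypothesis $\int_0^{2\pi}\chi_2\,\D\varphi\neq 0$ is exactly the nondegeneracy $\partial_{r_0}B(0,0)\neq 0$ needed to solve $B=0$ for $r_0$ as a function of $\mu$; the implicit function theorem yields a locally unique curve $r_0=r_0(\mu)$ with $r_0(0)=0$ and
\[
r_0'(0)=-\frac{\partial_\mu B(0,0)}{\partial_{r_0}B(0,0)}=\frac{-2\pi}{\int_0^{2\pi}\chi_2(\varphi)\,\D\varphi},
\]
which is \eqref{General_Result}. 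Since $B$ is smooth, $r_0(\mu)=r_0'(0)\mu+\calO(\mu^2)$, and the constraint $r_0\ge 0$ fixes the side of $\mu=0$ on which the branch exists — that is, the criticality — through the sign of $\int_0^{2\pi}\chi_2\,\D\varphi$.

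Finally, the time period is recovered from $\dot\varphi$ by integrating $\D t=(\omega+r\Omega_1(\varphi))^{-1}\,\D\varphi$ over one revolution, which equals $2\pi/\omega+\calO(\mu)$ because $r=\calO(\mu)$ on the branch (reducing to the stated $2\pi+\calO(\mu)$ under the normalization $\omega=1$). The main obstacle is not the non-smoothness itself, which is benign because it sits in $\varphi$ only, but rather organizing the reduction carefully — factoring out $r_0$ and establishing $C^\infty$-dependence of the return map on $(r_0,\mu)$ from mere continuity in $\varphi$ — so that the single scalar transversality condition $\int_0^{2\pi}\chi_2\,\D\varphi\neq 0$ drives the entire conclusion via one application of the implicit function theorem.
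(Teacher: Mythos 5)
Your proof is correct and follows essentially the same route as the paper's: both pass to $\varphi$ as the time variable, exploit that solutions depend smoothly on $(r_0,\mu)$ even though the right-hand side is only continuous in $\varphi$, and apply the implicit function theorem to a return condition at $\varphi=2\pi$ after factoring out the trivial solution — your rescaling $r=r_0\tilde{r}$ with variational equations is simply tidier bookkeeping than the paper's explicit expansion $r(\varphi)=\alpha_1(\varphi)r_0+\alpha_2(\varphi)r_0^2+\calO(r_0^3)$ followed by division by $r_0$. The one point you omit, and which the paper treats explicitly to justify ``locally unique branch'': a periodic orbit of the planar system could a priori close up only after $m\geq 2$ revolutions, i.e., satisfy $r(2\pi m)=r(0)$ without satisfying $r(2\pi)=r(0)$; this is ruled out by running your implicit-function-theorem argument for each fixed $m$ and observing that the $m=1$ branch already furnishes the locally unique solution of every such equation.
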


For later reference we present a rather detailed proof.

\begin{proof}
	As in the proof of Theorem \ref{t_averaging}, for small $r$ the radius satisfies
	\begin{align}
		{r}':= & \frac{r\mu + r^2\chi_2(\varphi)}{\omega + r\Omega_1(\varphi)} =: \Psi(r,\varphi).
		\label{System2Dparam}\end{align}
	
	We fix the initial time at $\varphi_0=0$ and for any initial $r(0)=r_0$, a unique local solution is guaranteed from the Picard-Lindel\"of theorem with continuous time dependence, e.g., \cite{Hartman}. 
	This also guarantees existence on any given time interval for sufficiently small $|\mu|, r_0$. Moreover, the solution $r(\varphi;r_0)$ can be Taylor expanded with respect to $r_0$ due to the smoothness of $\Psi(r,\varphi)$ in $r$ and continuity in the time component using the uniform contraction principle for the derivatives, cf.\ \cite{Hartman}.
	
	On the one hand, we may thus expand $r(\varphi)= r(\varphi; r_0)$ as
	\begin{equation*}
		r(\varphi)=\alpha_1(\varphi)r_0 + \alpha_2(\varphi)r_0^2 + \mathcal{O}\left(r_0^3\right),
	\end{equation*}
	and differentiate with respect to $\varphi$, 
	\begin{equation}
		r'(\varphi)=\alpha_1'(\varphi)r_0 + \alpha_2'(\varphi)r_0^2 + \mathcal{O}\left(r_0^3\right),
		\label{Expansion_r'}
	\end{equation}
	where $\alpha_1(0)=1$ and $\alpha_2(0)=0$ since $r(0)=r_0$.
	
	On the other hand, we Taylor expand $\Psi(r,\varphi)$ in $r=0$ from \eqref{System2Dparam}, using $\Psi(0,\varphi)=0$, as
	\begin{align}
		r' &= \Psi(r,\varphi) = \Psi(0,\varphi) + \partial_r\Psi(0,\varphi)r + \frac{1}{2}\partial^2_r\Psi(0,\varphi) r^2 + \mathcal{O}\left(r^3\right)
		= k_1 r + k_2 r^2 + \mathcal{O}\left(r^3\right),
		\label{Expansion2_r'}
	\end{align}
	where we denote $\partial^i_r\Psi(0,\varphi)=\frac{\partial^i\Psi(r,\varphi)}{\partial r^i}\big\rvert_{r=0}$, $i\in\mathbb{N}$, and set
	\begin{equation}\label{ks}
		k_1 := \partial_r\Psi(0,\varphi) = \frac{\mu}{\omega}, \hspace*{4mm} k_2(\varphi) := \frac{1}{2}\partial^2_r\Psi(0,\varphi) = \frac{\omega\chi_2(\varphi) - \mu\Omega_1(\varphi)}{\omega^2}.
	\end{equation}
	Matching the coefficients of $r_0$ and $r_0^2$ in \eqref{Expansion_r'} and \eqref{Expansion2_r'} gives the ODEs $\alpha_1' = k_1 \alpha_1$ and $\alpha_2' = k_1 \alpha_2 +k_2\alpha_1^2$.
	The solutions with $\alpha_1(0)=1$ and $\alpha_2(0)=0$ read
	\begin{align*}
		\alpha_1(\varphi) &= e^{k_1 \varphi}, 
		&\alpha_2(\varphi) &= 
		\int_0^\varphi e^{k_1(\varphi+s)}k_2(s)\D s.
	\end{align*}
	Periodic orbits necessarily have period $2\pi m$ for some $m\in\N$, which yields the condition
	\begin{equation}\label{per_orb_r}
		0 = r(2\pi m)-r(0) = \int_0^{2\pi m} r' \mathrm{d}\varphi = r_0\int_0^{2\pi m} \alpha_1'(\varphi)\D \varphi + r_0^2\int_0^{2\pi m} \alpha_2'(\varphi)\D \varphi + \mathcal{O}\left(r_0^3\right).
	\end{equation}
	Using the series expansion of $e^{2\pi m k_1}$ in $\mu=0$ we have
	$$ \int_0^{2\pi m} \alpha_1'(\varphi)\D \varphi = \alpha_1(2\pi m)-\alpha_1(0) = e^{2\pi m k_1}-1 = 2\pi m k_1 + \mathcal{O}\left(\mu^2\right), $$
	and similarly,
	\begin{align*}
		\int_0^{2\pi m} \alpha_2'(\varphi)\D \varphi &= \alpha_2(2\pi m)-\alpha_2(0) =  e^{2\pi m k_1}\int_0^{2\pi m} e^{k_1\varphi}k_2(\varphi)\D \varphi - 0 \\
		&= (1+2\pi m k_1)\int_0^{2\pi m}k_2(\varphi)\D\varphi + k_1\int_0^{2\pi m}\varphi k_2(\varphi)\D\varphi + \mathcal{O}\left(\mu^2\right).
	\end{align*}
	For non-trivial periodic orbits, $r_0\neq 0$, we divide \eqref{per_orb_r} by $r_0$, which provides the bifurcation equation
	\begin{equation*}
		0 = 2\pi m k_1 + r_0\left( (1+2\pi m k_1)\int_0^{2\pi m}k_2(\varphi)\D\varphi + k_1\int_0^{2\pi m}\varphi k_2(\varphi)\D\varphi \right) + \mathcal{O}\left(\mu^2\right),
	\end{equation*}
	where the factor of $r_0$ is nonzero at $\mu=0$ by assumption. Hence, the implicit function theorem applies and gives a unique solution. Since the solution for $m=1$ is a solution for any $m$, this is the unique periodic solution. Solving the bifurcation equation for $m=1$ yields
	\begin{equation*}
		r_0 = \frac{- 2\pi \mu}{(\omega+2\pi  \mu)\int_0^{2\pi }k_2(\varphi)\D\varphi + \mu\int_0^{2\pi m}\varphi k_2(\varphi)\D\varphi} + \mathcal{O}\left(\mu^2\right),
	\end{equation*}
	whose expansion in $\mu=0$ gives the claimed \eqref{General_Result} and in particular the direction of branching.
	
	Finally, the exchange of stability between the trivial equilibrium and the periodic orbit follows from the monotonicity of the $1$-dimensional Poincar\'e Map on an interval that contains $r=0$ and $r=r_0(\mu)$ by uniqueness of the periodic orbit. 
\end{proof}

We next note that higher order perturbations do not change the result to leading order.

\begin{corollary}\label{hot2D}
	The statement of Proposition \ref{Thm_Gen} holds for a planar system in polar coordinates $(r,\varphi)\in \R_+ \times [0,2\pi)$ periodic in $\varphi$, of the form
	\begin{align}
		\begin{cases}
			\dot{r} &= r\mu + r^2\chi_2(\varphi) + r^3\chi_3(r,\varphi), \\
			\dot{\varphi} &= \omega + r\Omega_1(\varphi) + r^2\Omega_2(r,\varphi),
		\end{cases}
		\label{System_Polar_General}
	\end{align}
	where $\mu\in\R$, $\omega\neq 0$ and $\chi_{j+1}$, $\Omega_j$, $j=1,2$, are continuous in their variables. 
\end{corollary}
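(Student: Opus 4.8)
The plan is to reduce \eqref{System_Polar_General} to a scalar equation for $r'=dr/d\varphi$ exactly as in the proof of Proposition \ref{Thm_Gen}, and then observe that the additional terms $r^3\chi_3$ and $r^2\Omega_2$ enter only the $\mathcal{O}(r^3)$ remainder, leaving the leading-order bifurcation analysis literally unchanged. Since $\omega\neq 0$, for $0\le r,|\mu|\ll1$ the angular equation gives $\dot\varphi\neq0$, and dividing the two equations of \eqref{System_Polar_General} yields
\[
r' = \frac{r\mu + r^2\chi_2(\varphi)+r^3\chi_3(r,\varphi)}{\omega + r\Omega_1(\varphi)+r^2\Omega_2(r,\varphi)} =: \widehat\Psi(r,\varphi).
\]
Expanding $\widehat\Psi$ in $r$ at $r=0$, the numerator differs from that of $\Psi$ in \eqref{System2Dparam} by a term of order $r^3$, and the denominator by a term of order $r^2$; because the numerator begins at order $r$, the order-$r^2$ modification of the denominator enters the quotient only at order $r^3$. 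Hence the $r$- and $r^2$-coefficients of $\widehat\Psi$ coincide with those of $\Psi$, so $\widehat\Psi(r,\varphi)=k_1 r + k_2(\varphi)r^2 + \mathcal{O}(r^3)$ uniformly in $\varphi$, with $k_1=\mu/\omega$ and $k_2(\varphi)=(\omega\chi_2-\mu\Omega_1)/\omega^2$ exactly as in \eqref{ks}.

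With this expansion in hand I would repeat the argument of Proposition \ref{Thm_Gen} verbatim. The expansion $r(\varphi;r_0)=\alpha_1(\varphi)r_0+\alpha_2(\varphi)r_0^2+\mathcal{O}(r_0^3)$ is governed by the same linear ODEs $\alpha_1'=k_1\alpha_1$ and $\alpha_2'=k_1\alpha_2+k_2\alpha_1^2$, since these are determined solely by the $r$- and $r^2$-coefficients of $\widehat\Psi$; thus $\alpha_1,\alpha_2$ are unchanged and all higher-order contributions are absorbed into the $\mathcal{O}(r_0^3)$ remainder. Consequently the periodicity condition $r(2\pi m)-r(0)=0$ reproduces the bifurcation equation \eqref{per_orb_r} up to $\mathcal{O}(r_0^3)+\mathcal{O}(\mu^2)$, the coefficient of $r_0$ is again nonzero at $\mu=0$ under the hypothesis $\int_0^{2\pi}\chi_2\,\D\varphi\neq0$, and the implicit function theorem delivers the same locally unique branch with leading-order radius \eqref{General_Result}. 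Uniqueness across $m$ and the exchange of stability via monotonicity of the one-dimensional Poincar\'e map then follow as in Proposition \ref{Thm_Gen}.

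The main point to verify carefully is regularity: the second-order expansion of the flow $r(\varphi;r_0)$ used above relies on $\widehat\Psi$ being sufficiently smooth in $r$ (the uniform contraction principle for the derivatives of the solution invoked in Proposition \ref{Thm_Gen}). Since $\chi_3,\Omega_2$ are only assumed continuous, I would either check that the products $r^3\chi_3$ and $r^2\Omega_2$ retain the differentiability in $r$ needed near $r=0$, or—more robustly—bypass second-order differentiability of the flow by applying the implicit function theorem directly to the Poincar\'e displacement map $r_0\mapsto r(2\pi;r_0)-r_0$, whose relevant $r_0$- and $\mu$-derivatives at the origin are controlled entirely by $k_1,k_2$ and hence by the leading expansion of $\widehat\Psi$. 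This regularity bookkeeping is the only genuine obstacle; the bifurcation-theoretic content is identical to Proposition \ref{Thm_Gen}.
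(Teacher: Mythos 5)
Your proposal is correct and takes essentially the same route as the paper: its proof likewise forms the quotient $\Psi(r,\varphi)$ with the additional terms, subtracts the leading-order part $k_1 r+k_2(\varphi)r^2$ of \eqref{Expansion2_r'}, observes that the remainder is $\calO(r^3)$, and concludes that the argument of Proposition~\ref{Thm_Gen} applies unchanged. Your closing concern about regularity (that $\chi_3,\Omega_2$ are only continuous, so the flow cannot literally be Taylor-expanded in $r_0$) is a point the paper passes over silently; it can be settled without any differentiability of $\chi_3,\Omega_2$ in $r$ by a Gronwall estimate on the difference $r(\varphi;r_0)-\alpha_1(\varphi)r_0-\alpha_2(\varphi)r_0^2$, which yields the uniform $\calO(r_0^3)$ remainder needed for the bifurcation equation.
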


Note that system \eqref{System_Polar_General} is a generalization of \eqref{System2DpolarNF} in which $\chi_3$ and $\Omega_2$ depend now on $r$.

\begin{proof}
	Following the proof of Proposition \ref{Thm_Gen} we write system \eqref{System_Polar_General} analogous to \eqref{System2Dparam} with $$\Psi(r,\varphi) = \frac{r\mu + r^2\chi_2(\varphi) + r^3\chi_3(r,\varphi)}{\omega + r\Omega_1(\varphi) + r^2\Omega_2(r,\varphi)}.$$
	Upon subtracting the leading order part of \eqref{Expansion2_r'}, a direct computation produces a remainder term of order $\calO(r^3)$, which leads to the claimed result.
\end{proof}

Next we show how these results can be directly used to determine the Hopf bifurcation and its super- or subcriticality. 
Starting with the simplest model, we return to system \eqref{General2D_AV} with $f_q, g_q, f_c, g_c \equiv 0$, i.e., \eqref{e:planar0}. 
Recall $\sigma_{_\#}=2a_{11}+a_{12}+b_{21}+2b_{22}$ from \eqref{sigma1} was identified as determining the criticality in Corollary \ref{c_averaging}. With the direct method we obtain the following.

\begin{theorem}
	\label{1stPart}
	If $\sigma_{_\#}\neq 0$, then there exists an interval $I$ around $\mu=0$ such that at $\mu=0$ system \eqref{e:planar0} with $f,g$ from \eqref{e:quadnonlin} undergoes a degenerate Hopf bifurcation in $\mu$ where the leading order amplitudes of the locally unique periodic orbits is given by \eqref{periodic_orbit}. 
	In particular, the unique bifurcating branch of periodic solutions emerges subcritically if {$\sgn(\sigma_{_\#})>0$} and supercritically if {$\sgn(\sigma_{_\#})<0$}. Moreover, the bifurcating periodic orbits have exchanged stability with the equilibrium at $r=0$, i.e., are stable if they exist for $\mu>0$ and unstable if this is for $\mu<0$.
\end{theorem}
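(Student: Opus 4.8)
The plan is to reduce the full planar system \eqref{e:planar0} with the quadratic non-smooth nonlinearity \eqref{e:quadnonlin} to the radial bifurcation equation already analyzed in Proposition \ref{Thm_Gen} (and its higher-order extension Corollary \ref{hot2D}), and then to invoke those results directly. First I would pass to polar coordinates $v=r\cos\varphi$, $w=r\sin\varphi$, which transforms \eqref{e:planar0} into a system of the form \eqref{System2Dpolar}, with $\chi_2(\varphi)$ and $\Omega_1(\varphi)$ given explicitly by \eqref{chi} and \eqref{Omega}. Because $\omega\neq 0$, for $0\leq r,|\mu|\ll 1$ the angular velocity $\dot\varphi$ stays bounded away from zero, so $\varphi$ is monotone and can serve as the new time variable exactly as in \eqref{System2Dparam}.

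The key computational step is to evaluate the criticality integral $\int_0^{2\pi}\chi_2(\varphi)\,\D\varphi$ appearing in Proposition \ref{Thm_Gen} and identify it with $\sigma_{_\#}$. Substituting \eqref{chi} and using the elementary integrals of $c\abs c$, $c\abs s$, $s\abs c$, $s\abs s$ weighted against $c$ or $s$ over $[0,2\pi]$ — the nonzero ones being of the form $\int_0^{2\pi} c^2\abs c\,\D\varphi$, $\int_0^{2\pi} s^2\abs s\,\D\varphi$, and the mixed terms — one finds, after collecting, that
\begin{equation*}
	\int_0^{2\pi}\chi_2(\varphi)\,\D\varphi = \frac{4}{3}\left(2a_{11}+a_{12}+b_{21}+2b_{22}\right) = \frac{4}{3}\,\sigma_{_\#}.
\end{equation*}
This is consistent with the averaging computation in \eqref{averaging_integrals}, where the same integral produced the coefficient $\tfrac{2}{3\pi\omega}\sigma_{_\#}$ of $\bar r^2$. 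Since $\sigma_{_\#}\neq 0$ by hypothesis, the nondegeneracy assumption of Proposition \ref{Thm_Gen} holds, and the resulting branch of periodic orbits has leading-order radius
\begin{equation*}
	r_0(\mu) = \frac{-2\pi}{\tfrac{4}{3}\sigma_{_\#}}\,\mu + \calO(\mu^2) = -\frac{3\pi}{2\sigma_{_\#}}\mu + \calO(\mu^2),
\end{equation*}
which is precisely \eqref{periodic_orbit}.

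Next I would account for the phase: Proposition \ref{Thm_Gen} yields a locally unique periodic solution of the rescaled radial equation, but one must confirm it corresponds to a genuine periodic orbit of the original planar system rather than a relative periodic orbit. Since the time reparametrization was chosen so that the return time to $\varphi=0$ is normalized, and since $\dot\varphi$ is $2\pi$-periodic and nonvanishing, a solution with $r(2\pi)=r(0)$ closes up into a periodic orbit of period $2\pi+\calO(\mu)$; uniqueness over all $m\in\N$ is handled exactly as in the proof of Proposition \ref{Thm_Gen}. The sign condition $r_0\geq 0$ then forces $\mu/\sigma_{_\#}\geq 0$, giving subcriticality for $\sgn(\sigma_{_\#})>0$ and supercriticality for $\sgn(\sigma_{_\#})<0$, while the exchange of stability follows from the monotonicity of the one-dimensional Poincaré map, again as in Proposition \ref{Thm_Gen}. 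The main obstacle is not conceptual but bookkeeping: one must verify that the genuinely non-smooth terms $\abs c,\abs s$ do not spoil the Taylor-in-$r_0$ expansion underpinning Proposition \ref{Thm_Gen}. This is exactly the point of the coordinate setup — after reparametrizing by $\varphi$, the vector field $\Psi(r,\varphi)$ is smooth in $r$ (the non-smoothness having been pushed entirely into the $\varphi$-dependence, which is merely continuous), so the $r_0$-expansion and the contraction-mapping argument go through with only continuity in the time variable, as already noted in the paragraph following Theorem \ref{t_per_orb}.
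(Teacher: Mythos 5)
Your proposal is correct and takes essentially the same route as the paper's own proof: pass to polar coordinates to obtain \eqref{System2Dpolar} with $\chi_2,\Omega_1$ from \eqref{chi}, \eqref{Omega}, apply Proposition \ref{Thm_Gen} after computing $\int_0^{2\pi}\chi_2(\varphi)\,\D\varphi=\frac{4}{3}\sigma_{_\#}\neq 0$ quadrant by quadrant, and deduce the exchange of stability from monotonicity of the one-dimensional Poincar\'e map. One minor sign slip: $r_0=-\frac{3\pi}{2\sigma_{_\#}}\mu\geq 0$ forces $\mu/\sigma_{_\#}\leq 0$, not $\geq 0$, but your criticality assignment (subcritical for $\sigma_{_\#}>0$, supercritical for $\sigma_{_\#}<0$) is nonetheless the correct one --- the paper's Corollary \ref{c_averaging} contains the same slip.
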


\begin{proof}
	Taking polar coordinates $(v,\w)=(r\cos{\varphi},r\sin{\varphi})$ for system \eqref{e:planar0} gives \eqref{System2Dpolar}, where $\chi_2(\varphi)$ and $\Omega_1(\varphi)$ are as in \eqref{chi} and \eqref{Omega}, respectively.
	Applying Proposition \ref{Thm_Gen}
	and computing the integral of $\chi_2$ 
	in each quadrant as in the proof of Theorem~\ref{t_averaging}, we obtain 
	\eqref{periodic_orbit}, and the criticality follows as in Corollary~\ref{c_averaging}.
	
	Finally, the exchange of stability is due to the monotonicity of the $1$-dimensional Poincar\'e Map.
\end{proof}

We next note that, proceeding as for Corollary \ref{hot2D}, the coefficients from the quadratic and cubic terms do not affect the bifurcation to leading order.

\begin{corollary}
	\label{thm_cubic}
	If $\sigma_{_\#}\neq 0$, then the statement of Theorem \ref{1stPart} holds for the more general system \eqref{General2D_AV}. In particular, $f_q$, $g_q$, $f_c$, $g_c$ do not affect $\sigma_{_\#}$ and the leading order bifurcation.
\end{corollary}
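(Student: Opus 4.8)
The plan is to reduce the more general system \eqref{General2D_AV} to the framework already in place, exploiting that the leading-order bifurcation depends only on the angular average of the quadratic radial coefficient. First I would pass to polar coordinates $(v,\w)=(r\cos\varphi,\, r\sin\varphi)$, which — exactly as computed in the proof of Theorem~\ref{t_averaging} — brings \eqref{General2D_AV} into the form \eqref{System2DpolarNF}, i.e.\ precisely the shape of \eqref{System_Polar_General} treated in Corollary~\ref{hot2D}. The smooth cubic terms $f_c, g_c$ enter only through $\chi_3$ and $\Omega_2$, which multiply $r^3$ and $r^2$ respectively; since Corollary~\ref{hot2D} guarantees that such higher-order contributions leave the leading-order bifurcation unchanged, the cubic terms are immediately seen to be irrelevant to $\sigma_{_\#}$ and to the leading-order branch.

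The substantive point concerns the smooth quadratic terms $f_q, g_q$, which contribute at the same order as the non-smooth terms: they appear additively in $\chi_2(\varphi)$ and $\Omega_1(\varphi)$. By Proposition~\ref{Thm_Gen}, applied through Corollary~\ref{hot2D}, the direction and leading-order amplitude of the bifurcating branch are governed solely by $\int_0^{2\pi}\chi_2(\varphi)\,\D\varphi$. So the key step is to compute the contribution of $f_q, g_q$ to this integral and show it vanishes. From the explicit $\chi_2$ in the proof of Theorem~\ref{t_averaging}, the smooth quadratic part of $\chi_2$ equals $(a_1-b_2-a_3)c^3 + (b_1+a_2-b_3)sc^2 + (b_2+a_3)c + b_3 s$ with $c=\cos\varphi$, $s=\sin\varphi$; each of these trigonometric monomials integrates to zero over a full period, so the smooth quadratic contribution to $\int_0^{2\pi}\chi_2\,\D\varphi$ is exactly zero.

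Consequently $\int_0^{2\pi}\chi_2(\varphi)\,\D\varphi$ coincides with the value obtained in Theorem~\ref{1stPart} from the non-smooth terms alone, namely the positive multiple $\tfrac{4}{3}\sigma_{_\#}$. Invoking Corollary~\ref{hot2D} then yields the same degenerate Hopf bifurcation, the same leading-order amplitude \eqref{periodic_orbit}, the same criticality dichotomy (subcritical for $\sgn(\sigma_{_\#})>0$, supercritical for $\sgn(\sigma_{_\#})<0$), and the same stability exchange as in Theorem~\ref{1stPart}. This establishes the claim, and in particular that $f_q, g_q, f_c, g_c$ do not enter $\sigma_{_\#}$.

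The main obstacle — mild, given the machinery already assembled — is confirming the vanishing of the smooth quadratic angular average; this rests on the parity of the trigonometric monomials rather than on any cancellation among the coefficients $a_i, b_i$. Conceptually this mirrors the classical smooth Hopf theory, where smooth quadratic terms never contribute to the first Lyapunov coefficient at quadratic order and re-enter only at cubic order through the averaging; here the non-smooth quadratic terms survive the average precisely because the factors $|c|,|s|$ break exactly this parity.
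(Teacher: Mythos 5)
Your proposal is correct and follows essentially the same route as the paper: pass to polar coordinates to obtain the form \eqref{System2DpolarNF}, dispose of the cubic terms via Corollary~\ref{hot2D}, and observe that the smooth quadratic contribution to $\int_0^{2\pi}\chi_2(\varphi)\,\D\varphi$ vanishes by parity of the trigonometric monomials $c^3,\,sc^2,\,c,\,s$, so that the integral equals $\tfrac{4}{3}\sigma_{_\#}$ exactly as in Theorem~\ref{1stPart}. The paper compresses this into the remark ``proceeding as for Corollary~\ref{hot2D}'' and relegates the vanishing-average computation to the averaging proof and appendix; you have merely spelled out the same argument in full.
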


Having investigated $\sigma_{_\#}\neq 0$, we next consider the degenerate case $\sigma_{_\#}= 0$. For that, recall  Remark~\ref{2nd_Lyap} and {$\sigma_2$ from} \eqref{sigma2}.

\begin{theorem}
	\label{2ndPart}
	If $\sigma_{_\#}= 0$ and $\sigma_2\neq 0$, then there exists an interval $I$ around $\mu=0$ such that at $\mu=0$ system \eqref{General2D_AV} undergoes a degenerate Hopf bifurcation in $\mu$ where the leading order amplitude of the locally unique periodic orbit is given by 
	\begin{equation}\label{r_2nd_HB}
		r_0=\sqrt{-\frac{2\pi\omega}{\sigma_2}\mu}+\mathcal{O}\left(\mu\right).
	\end{equation}
	Particularly, the unique bifurcating branch of periodic solutions results subcritically if 
	$\sgn(\omega\sigma_2)>0$ and supercritically if $\sgn(\omega\sigma_2)<0$. Moreover, the bifurcating periodic orbits have exchanged stability with the equilibrium at $r=0$, i.e., are stable if they exist for $\mu>0$ and unstable if this is for $\mu<0$.
\end{theorem}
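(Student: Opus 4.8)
The plan is to follow the Lyapunov--Schmidt machinery of Proposition~\ref{Thm_Gen}, but to push the Taylor expansion of the return map one order higher, since the hypothesis $\sigma_{_\#}=0$ forces the leading nonlinear contribution to the bifurcation equation to come from the cubic term in $r$ rather than the quadratic one. First I would pass to polar coordinates $(v,w)=(r\cos\varphi,r\sin\varphi)$ to bring \eqref{General2D_AV} into the form \eqref{System2DpolarNF}, and then rescale time by $\dot\varphi$ (legitimate for $0\leq r,|\mu|\ll1$ since $\omega\neq0$) to obtain the scalar nonautonomous equation $r'=\Psi(r,\varphi)$ with $r'$ expanded as in \eqref{DoNormalForm_r}, namely $r'=k_1 r+k_2(\varphi)r^2+k_3(\varphi)r^3+\calO(r^4)$, where now $k_3$ carries the cubic coefficients $\chi_3,\Omega_2$ together with $\chi_2,\Omega_1$. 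I would then invoke Corollary~\ref{hot2D} to guarantee that the higher-order remainder does not affect the leading-order conclusion.

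Next I would expand the solution $r(\varphi;r_0)=\alpha_1(\varphi)r_0+\alpha_2(\varphi)r_0^2+\alpha_3(\varphi)r_0^3+\calO(r_0^4)$ exactly as in the proof of Proposition~\ref{Thm_Gen}, matching powers of $r_0$ against $\Psi$ to get the linear ODEs $\alpha_1'=k_1\alpha_1$, $\alpha_2'=k_1\alpha_2+k_2\alpha_1^2$, and the new relation $\alpha_3'=k_1\alpha_3+2k_2\alpha_1\alpha_2+k_3\alpha_1^3$, solved by variation of constants with $\alpha_1(0)=1$, $\alpha_2(0)=\alpha_3(0)=0$. The periodicity condition $r(2\pi m)=r(0)$, divided by $r_0$ for nontrivial orbits, yields a bifurcation equation of the form $0=2\pi m k_1 + c_2(\mu)r_0 + c_3(\mu)r_0^2+\calO(r_0^3)$. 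The decisive point is that the coefficient $c_2(\mu)=\int_0^{2\pi m}\alpha_2'\,\D\varphi$ evaluated at $\mu=0$ is $\int_0^{2\pi m}k_2(\varphi)\D\varphi$, which is proportional to $\int_0^{2\pi}\chi_2(\varphi)\D\varphi$, and the computation in Theorem~\ref{t_averaging} shows this integral equals a constant multiple of $\sigma_{_\#}$; hence $\sigma_{_\#}=0$ makes the $r_0$-term vanish at $\mu=0$. Similarly, $c_3(0)=\int_0^{2\pi m}k_3(\varphi)\D\varphi$ reduces (via the averaging integrals \eqref{averaging_integrals}) to a multiple of $\sigma_2/\omega^2$, which is nonzero by hypothesis.

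With $\sigma_{_\#}=0$ the bifurcation equation becomes, to leading order, $0=\tfrac{2\pi}{\omega}\mu + (\text{const}\cdot\tfrac{\sigma_2}{\omega^2})r_0^2+\calO(\mu r_0,r_0^3)$; solving for $r_0$ gives the square-root scaling $r_0=\sqrt{\tfrac{2\pi\omega}{\sigma_2}\mu}+\calO(\mu)$ of \eqref{r_2nd_HB}, with the branch existing precisely when $\mu/(\omega\sigma_2)>0$, which pins down the criticality as stated. The exchange of stability follows, as in Proposition~\ref{Thm_Gen}, from monotonicity of the one-dimensional Poincar\'e map on an interval containing $r=0$ and $r=r_0(\mu)$.

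The main obstacle I anticipate is the bookkeeping in the cubic coefficient: verifying that the $\varphi$-average of $k_3$ really collapses to the compact combination $\tfrac{1}{8\omega^2}S_q+\tfrac{1}{8\omega}S_c-\tfrac{1}{2\pi\omega^2}\sigma_2$ requires integrating products of $c|c|,s|s|,c|s|,s|c|$ with trigonometric monomials quadrant by quadrant, and carefully tracking the mixed term $2k_2\alpha_1\alpha_2$ whose $\mu$-dependence must be checked to contribute only at order $\calO(\mu r_0)$ so that it does not corrupt the leading balance. I would handle this by confirming that all $\mu$-dependent pieces of $c_3$ are subleading at $\mu=0$, so that only the $\sigma_2$-part survives in the dominant balance against the linear term $\tfrac{2\pi}{\omega}\mu$.
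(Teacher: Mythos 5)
Your plan follows the same route as the paper's proof: polar coordinates, time rescaling to a scalar equation $r'=\Psi(r,\varphi)$, expansion $r=\alpha_1 r_0+\alpha_2 r_0^2+\alpha_3 r_0^3+\calO(r_0^4)$ with exactly the same three variation-of-constants ODEs, and a bifurcation equation of the form \eqref{Thm2_eq} solved by the implicit function theorem. However, your handling of the cubic coefficient is internally inconsistent, and the inconsistency is not cosmetic. Mid-plan you assert that $c_3(0)$ ``reduces to a multiple of $\sigma_2/\omega^2$'', while your closing paragraph records what \eqref{averaging_integrals} actually gives for the $\varphi$-average of $\chi_3/\omega-\chi_2\Omega_1/\omega^2$ in the full system \eqref{General2D_AV}, namely $\frac{1}{8\omega^2}S_q+\frac{1}{8\omega}S_c-\frac{1}{2\pi\omega^2}\sigma_2$ (up to the factor $2\pi$). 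If the smooth terms $f_q,g_q,f_c,g_c$ are retained, the bifurcation equation reads $0=\frac{2\pi}{\omega}\mu+\bigl(\frac{\pi}{4\omega^2}S_q+\frac{\pi}{4\omega}S_c-\frac{\sigma_2}{\omega^2}\bigr)r_0^2+\calO(\mu^2+\mu r_0+r_0^3)$, and solving it does \emph{not} produce \eqref{r_2nd_HB}; the amplitude then involves $S_q$ and $S_c$, consistent with the cubic coefficient of \eqref{NormalFormAV}. The formula \eqref{r_2nd_HB} with only $\sigma_2$ holds in the absence of smooth terms: this is what Remark~\ref{2nd_Lyap} announces, and it is what the paper's proof silently does by working with the radial equation \eqref{System2Dparam}, in which $\chi_3=\Omega_2=0$ and $\chi_2,\Omega_1$ are the purely non-smooth \eqref{chi}, \eqref{Omega}. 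So you must either set $f_q=g_q=f_c=g_c\equiv 0$ or restate the conclusion with the full cubic coefficient. Relatedly, Corollary~\ref{hot2D} cannot be invoked here: its hypothesis, inherited from Proposition~\ref{Thm_Gen}, is $\int_0^{2\pi}\chi_2\,\D\varphi\neq 0$, which is exactly what fails when $\sigma_{_\#}=0$, and its moral --- that cubic terms are irrelevant to leading order --- is precisely what is false in this degenerate case. The $\calO(r_0^4)$ remainder is instead absorbed into the error term of the bifurcation equation and controlled by the implicit function theorem, as in the paper.

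The second issue is the mixed term $2k_2\alpha_1\alpha_2$, which you flag but leave open, and which is not a question of ``$\mu$-dependence'': one must show that its $\mu$-\emph{independent} part vanishes, and this is the second place where the hypothesis $\sigma_{_\#}=0$ enters. At $\mu=0$ one has $k_1=0$, $\alpha_1\equiv 1$ and $\alpha_2'=k_2$, so $2k_2\alpha_2=\bigl(\alpha_2^2\bigr)'$ is a perfect derivative and
\begin{equation*}
\int_0^{2\pi}2k_2(\varphi)\alpha_2(\varphi)\,\D\varphi=\alpha_2(2\pi)^2=\Bigl(\tfrac{1}{\omega}\int_0^{2\pi}\chi_2\,\D\varphi\Bigr)^2=\Bigl(\tfrac{4\sigma_{_\#}}{3\omega}\Bigr)^2,
\end{equation*}
which is zero precisely because $\sigma_{_\#}=0$; this is the $\sigma_{_\#}^2$ contribution visible in the paper's coefficient $\Gamma_3$. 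With these two repairs --- restricting to the setting without smooth terms (or adjusting the formula), and the perfect-derivative argument for the mixed term --- your plan coincides with the paper's proof, including the final criticality and stability arguments.
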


\begin{proof}
	Proceeding as before, we write \eqref{General2D_AV} in polar coordinates $(v,\w)=(r\cos{\varphi},r\sin{\varphi})$ and change the time parametrization to obtain the form \eqref{System2Dparam} for the radial equation. 
	
	On the one hand, we expand the solution $r(\varphi)=r(\varphi;r_0)$ with $r(0)=r_0$ and differentiate it with respect to $\varphi$ as
	\begin{equation}
		r'(\varphi)=\alpha_1'(\varphi)r_0 + \alpha_2'(\varphi)r_0^2 + \alpha_3'(\varphi)r_0^3 + \mathcal{O}\left(r_0^4\right),
		\label{2Expansion_r}
	\end{equation}
	where $\alpha_1(0)=1$ and $\alpha_2(0)=\alpha_3(0)=0$.
	
	On the other hand, we compute the Taylor expansion of $r'$, from \eqref{System2Dparam}, up to third order in $r=0$ as
	\begin{equation*}
		r' = \Psi(r,\varphi) = k_1 r + k_2 r^2 + k_3 r^3 + \mathcal{O}\left(r^4\right),
	\end{equation*}
	where we use $\Psi(0,\varphi)=0$ and the notation \eqref{ks} as well as
	\begin{equation*}
		k_3(\varphi) := \frac{1}{3!}\partial^3_r\Psi(0,\varphi)= \frac{-\omega\chi_2(\varphi)\Omega_1(\varphi) + \mu\Omega_1(\varphi)^2}{\omega^3}.
	\end{equation*}
	Analogous to the proof of Proposition \ref{Thm_Gen}, using \eqref{2Expansion_r} and its derivate, and comparing coefficients, we obtain the ODEs
	\begin{align*}
		\alpha_1' &= k_1\alpha_1,
		& \alpha_2' &= k_1\alpha_2 + k_2\alpha_1^2,
		& \alpha_3' &= k_1\alpha_3 + 2k_2\alpha_1\alpha_2 + k_3\alpha_1^3.
	\end{align*}
	We solve these by variation of constants using $\alpha_1(0)=1$ and $\alpha_2(0)=\alpha_3(0)=0$ as
	\begin{align*}
		\alpha_1(\varphi) &= e^{k_1 \varphi}, \\
		\alpha_2(\varphi) &= \int_0^\varphi e^{k_1(\varphi+s)}k_2(s)\D s, \\
		\alpha_3(\varphi) &= e^{k_1\varphi}\left[2\int_0^\varphi k_2(s)\alpha_2(s)\D s + \int_0^\varphi e^{2k_1s}k_3(s)\D s \right].
	\end{align*}
	
	Periodic orbits are the solutions with $r_0\neq 0$ of
	\begin{equation}\label{e:2ndper}
		0 = r(2\pi)-r(0) = r_0\int_0^{2\pi}\alpha_1'(\varphi)\D\varphi + r_0^2\int_0^{2\pi}\alpha_2'(\varphi)\D\varphi + r_0^3\int_0^{2\pi}\alpha_3'(\varphi)\D\varphi + \mathcal{O}\left(r_0^4\right).
	\end{equation}
	Straightforward computations give $\alpha_1(2\pi)-\alpha_1(0) = \frac{2\pi}{\omega}\mu + \mathcal{O}\left(\mu^2\right)$. Furthermore, we obtain 	
	$\alpha_j(2\pi)-\alpha_j(0) = \Gamma_j + \mathcal{O}\left(\mu\right)$, $j=2,3$, 
	where $\cB = \frac{4}{3\omega}\sigma_{_\#}$ and $\cA = \frac{1}{\omega^2}\sigma_2$.
	Substitution into \eqref{e:2ndper} for periodic orbits and dividing out $r_0\neq 0$, yield the bifurcation equation
	\begin{equation}\label{Thm2_eq_r0}
		0 = \frac{2\pi}{\omega}\mu + \frac{4}{3\omega}\sigma_{_\#}r_0 + \frac{1}{\omega^2}\sigma_2 r_0^2 + \mathcal{O}\left(\mu^2 + \mu r_0 + r_0^3\right).
	\end{equation}
	Since $\sigma_{_\#}=0$, this equation becomes 
	$0 = \frac{2\pi}{\omega}\mu + \frac{1}{\omega^2}\sigma_2 r_0^2 + \mathcal{O}\left(\mu^2 + \mu r_0 + r_0^3\right)$.
	Here the implicit function theorem applies a priori to provide a unique branch $\mu(r_0)$ with
	\begin{equation}\label{e:bifeq2}
		\mu = 
		-\frac{\sigma_2}{2\pi\omega}r_0^2 + \mathcal{O}\left(r_0^3\right).
	\end{equation} 
	Solving this for $r_0$ provides \eqref{r_2nd_HB}, where the square root to be real requires $\mu\omega\sigma_2 < 0$, which gives the claimed sub/supercriticality.
\end{proof}

This last theorem readily extends to the analogue of the so-called Bautin bifurcation for smooth vector fields, also called generalized Hopf bifurcation, which unfolds from zero first Lyapunov coefficient and identifies a curve of fold points. From \eqref{Thm2_eq_r0} we directly derive the loci fold points in the $(\mu, \sigma_{_\#})$-parameter plane as 
\[
\mu = \frac{2\omega}{9\pi\sigma_2} \sigma_{_\#}^2
\]
to leading order with respect to $\sigma_{_\#}$. 
Notably, the loci of fold points for the smooth Bautin bifurcation also lies on a quadratic curve in terms of the first Lyapunov coefficient. This last similarity is due to the fact that the ODE of the smooth case has no even terms in the radial component, $\dot{r} = \mu r + \sigma_s r^3 + \sigma_l r^5$, leading to $\mu = \frac{\sigma_s^2}{4\sigma_{l}}$, for $\frac{\sigma_s}{\sigma_l}<0$. In the  $(\mu,\sigma_{_\#})$-parameter plane, the origin corresponds to the Bautin point and the vertical axis, $\mu=0$, to the sub- and supercritical Hopf bifurcations for positive- and negative values of $\sigma_{_\#}$, respectively.

\section{Generalizations}\label{s:general}
In this section we discuss analogous bifurcation results for the generalization from the absolute value, \eqref{gen_abs_val}, and then turn to higher dimensional systems as well as general linear form of the linear part.

\subsection{Generalization from the absolute value}\label{s:appplanar}

Recall our notation for different left and right slopes \eqref{gen_abs_val}, and consider the generalized canonical equation 
\begin{equation}\label{e:genscalar}
	\dot{u} = \mu u+\sigma_{_\#} u^j [u\Pp,
\end{equation}
with left slope $p_-$, right slope $p_+$ and $j\in\N$ measuring the degree of smoothness such that the right-hand side is $C^j$ but not $C^{j+1}$ smooth. Sample bifurcation diagrams for $j=1$ and $j=2$ are plotted in Figure \ref{f:genslopes} for $\sigma_{_\#}=-1$. 

\begin{figure}
	\centering
	\begin{tabular}{cc}
		\includegraphics[width= 0.25\linewidth]{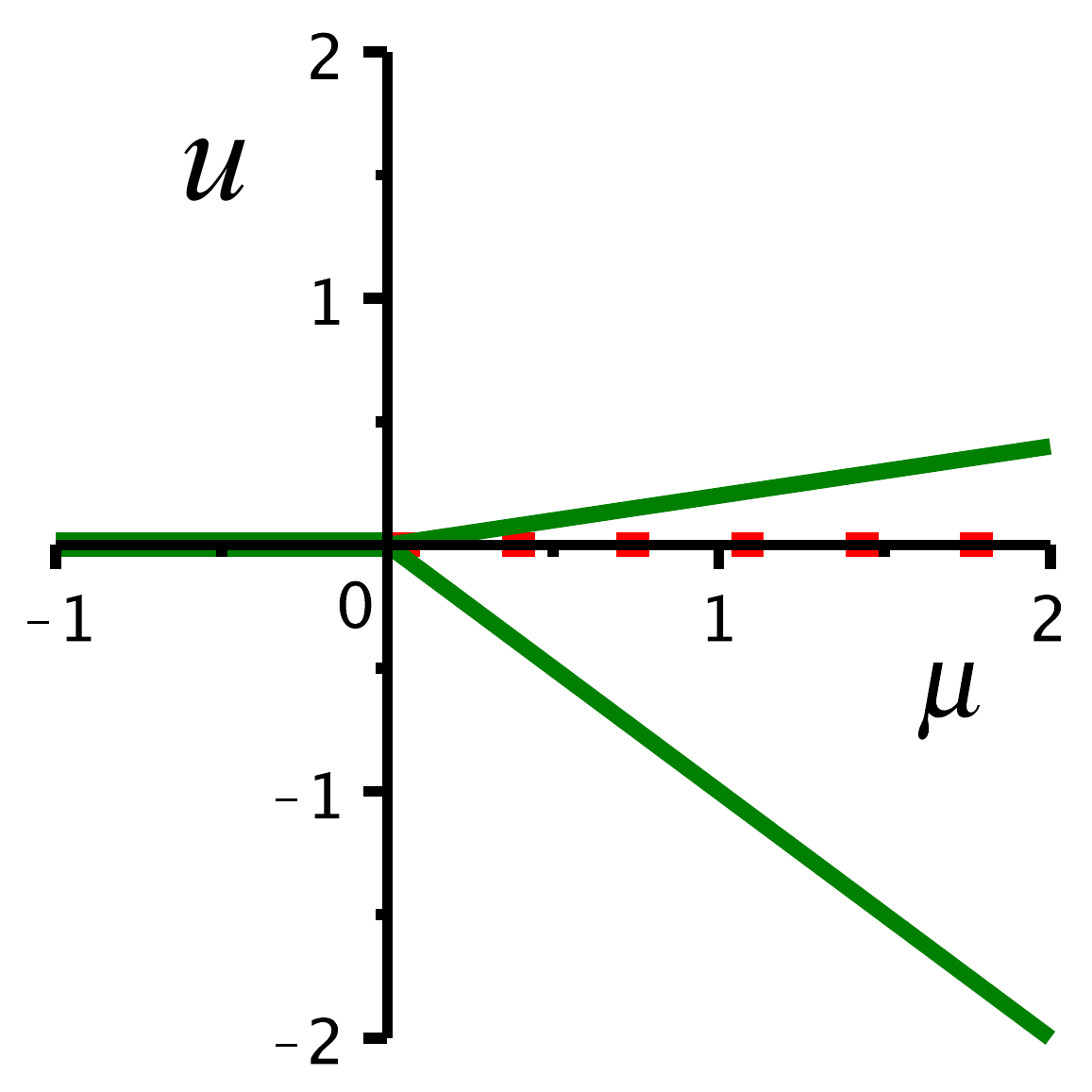}\hspace*{1cm}
		&\includegraphics[width= 0.25\linewidth]{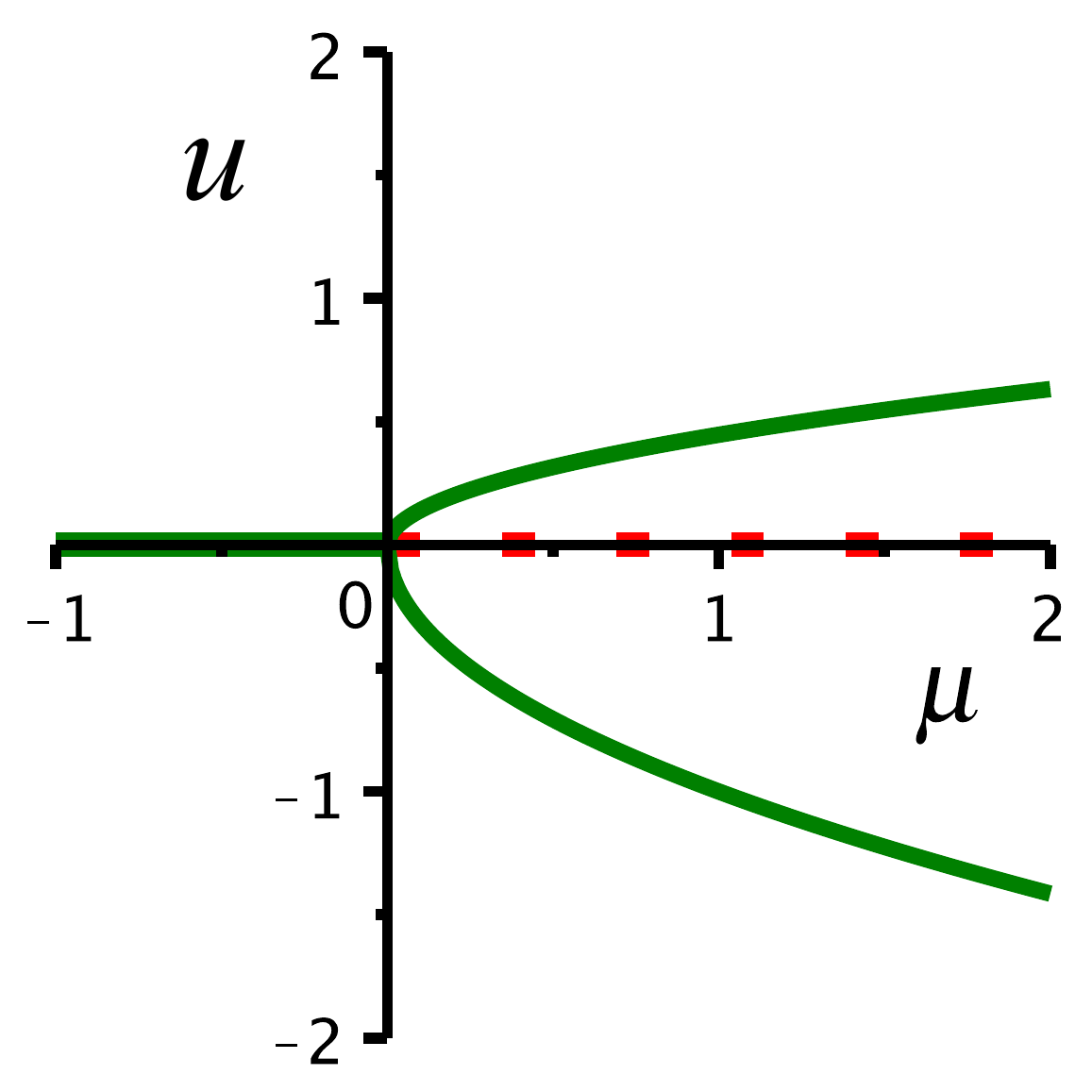}\\
		(a) & (b) 
	\end{tabular}
	\caption{Degenerated supercritical pitchfork bifurcation of \eqref{e:genscalar} for {$p_-=-1$, $p_+=5$} of degree $j=1$ (a) and $j=2$ (b).}\label{f:genslopes}
\end{figure}
The case $j=2$ highlights that also lack of smoothness in the cubic terms impacts the bifurcation in general. We do not pursue this further here, but analogous to the following discussion, it is possible to derive a modified normal form coefficient $S_c$.

For the Hopf bifurcation analysis, we analogously replace the absolute value in \eqref{e:quadnonlin} by \eqref{gen_abs_val}, and thus replace $f,g$ in \eqref{e:planar0} by
\begin{align}
	f\left( v, \w; \alpha \right) &= a_{11}v\ABS{v}{\alpha_1} + a_{12}v\ABS{\w}{\alpha_2} + a_{21}\w\ABS{v}{\alpha_3} + a_{22}\w\ABS{\w}{\alpha_4},\label{f_general}\\
	g\left( v, \w; \beta \right) &= b_{11}v\ABS{v}{\beta_1} + b_{12}v\ABS{\w}{\beta_2} + b_{21}\w\ABS{v}{\beta_3} + b_{22}\w\ABS{\w}{\beta_4},\label{g_general}
\end{align}
where $\alpha = (\alpha_{1_\pm},\alpha_{2_\pm},\alpha_{3_\pm},\alpha_{4_\pm})$, $\beta = (\beta_{1_\pm},\beta_{2_\pm},\beta_{3_\pm},\beta_{4_\pm}) \in\R^8$. 
This generalization leads to the generalized nonsmooth first Lyapunov coefficient given by
\begin{equation}\label{e:tildesig}
	\widetilde\sigma_{_\#}:=a_{11}(\alpha_{1_+}-\alpha_{1_-})+\frac 1 2 a_{12}(\alpha_{2_+}-\alpha_{2_-})+
	\frac 1 2 b_{21}(\beta_{3_+}-\beta_{3_-})+ b_{22}(\beta_{4_+}-\beta_{4_-}).
\end{equation}
Notably, in the smooth case, where the left- and right slopes coincide, we have $\widetilde\sigma_{_\#}=0$, and if left- and right slopes are $-1$ and $1$, respectively, we recover $\sigma_{_\#}$.

\begin{theorem}\label{Thm0_General}
	If $\widetilde\sigma_{_\#}\neq 0$, then the statement of Theorem~\ref{1stPart} holds true for \eqref{e:planar0} with $f,g$ from \eqref{f_general}, \eqref{g_general}, respectively, with $\sigma_{_\#}$ replaced by $\widetilde\sigma_{_\#}$.
\end{theorem}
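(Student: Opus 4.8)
The plan is to follow the proof of Theorem~\ref{1stPart} essentially verbatim, since replacing \eqref{e:quadnonlin} by \eqref{f_general}, \eqref{g_general} alters only the explicit shape of the angular coefficient $\chi_2$ and leaves the structure of the argument untouched. First I would write \eqref{e:planar0} with $f,g$ from \eqref{f_general}, \eqref{g_general} in polar coordinates $(v,\w)=(r\cos\varphi,r\sin\varphi)$ and rescale time as in \eqref{System2Dparam}, obtaining a radial equation of the form \eqref{System2Dpolar}. The generalized absolute value \eqref{gen_abs_val} is continuous, so the resulting $\chi_2,\Omega_1$ are continuous and $2\pi$-periodic; hence Proposition~\ref{Thm_Gen} applies directly (the system being purely quadratic, no higher-order tail appears and Corollary~\ref{hot2D} is not even needed). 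By \eqref{General_Result} the entire statement then reduces to evaluating $\int_0^{2\pi}\chi_2(\varphi)\,\D\varphi$ and showing it is a positive multiple of $\widetilde\sigma_{_\#}$.

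Second, I would compute $\chi_2$ explicitly. Since the bracket \eqref{gen_abs_val} is positively homogeneous of degree one, the substitution $v=rc$, $w=rs$ with $r>0$ pulls out a common factor $r^2$, so that, abbreviating $c=\cos\varphi$, $s=\sin\varphi$, the quadratic angular coefficient is
\[
\chi_2(\varphi) = c\big(a_{11}c\ABS{c}{\alpha_1}+a_{12}c\ABS{s}{\alpha_2}+a_{21}s\ABS{c}{\alpha_3}+a_{22}s\ABS{s}{\alpha_4}\big)+s\big(b_{11}c\ABS{c}{\beta_1}+b_{12}c\ABS{s}{\beta_2}+b_{21}s\ABS{c}{\beta_3}+b_{22}s\ABS{s}{\beta_4}\big),
\]
i.e.\ \eqref{chi} with each $|\cdot|$ replaced by the corresponding generalized bracket. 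I would then integrate term by term, splitting each integral into the half-circle where the bracketed argument is nonnegative (slope $\cdot_+$) and the one where it is negative (slope $\cdot_-$).

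The only real work, and the one place a sign slip can creep in, is the parity bookkeeping of these half-circle integrals; this is the step I expect to be delicate, though it is routine. The four cross terms $a_{21},a_{22},b_{11},b_{12}$ have integrands that are odd about the relevant coordinate axis (e.g.\ $cs\ABS{c}{\alpha_3}$ is odd on each half-circle $c\gtrless0$) and therefore integrate to zero, while the four surviving terms give $\int_0^{2\pi}c^2\ABS{c}{\alpha_1}\D\varphi=\tfrac{4}{3}(\alpha_{1_+}-\alpha_{1_-})$, $\int_0^{2\pi}c^2\ABS{s}{\alpha_2}\D\varphi=\tfrac{2}{3}(\alpha_{2_+}-\alpha_{2_-})$, $\int_0^{2\pi}s^2\ABS{c}{\beta_3}\D\varphi=\tfrac{2}{3}(\beta_{3_+}-\beta_{3_-})$, and $\int_0^{2\pi}s^2\ABS{s}{\beta_4}\D\varphi=\tfrac{4}{3}(\beta_{4_+}-\beta_{4_-})$. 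Summing and comparing with \eqref{e:tildesig} yields $\int_0^{2\pi}\chi_2\,\D\varphi=\tfrac{4}{3}\widetilde\sigma_{_\#}$; specializing the slopes to $\pm1$ recovers the factor $\tfrac{4}{3}\sigma_{_\#}$ of the proof of Theorem~\ref{1stPart}, a useful consistency check. In particular $\int_0^{2\pi}\chi_2\,\D\varphi\neq0\Leftrightarrow\widetilde\sigma_{_\#}\neq0$, so under the hypothesis Proposition~\ref{Thm_Gen} produces a locally unique branch of periodic orbits with leading amplitude $r_0=-\tfrac{2\pi}{(4/3)\widetilde\sigma_{_\#}}\mu+\calO(\mu^2)=-\tfrac{3\pi}{2\widetilde\sigma_{_\#}}\mu+\calO(\mu^2)$, which is exactly \eqref{periodic_orbit} with $\sigma_{_\#}$ replaced by $\widetilde\sigma_{_\#}$. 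The criticality ($\sgn\widetilde\sigma_{_\#}$) and the exchange of stability via monotonicity of the one-dimensional Poincar\'e map then follow word for word as in Theorem~\ref{1stPart} and Corollary~\ref{c_averaging}.
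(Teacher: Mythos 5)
Your proposal is correct and follows essentially the same route as the paper's own proof: polar coordinates, direct application of Proposition~\ref{Thm_Gen}, and evaluation of $\int_0^{2\pi}\chi_2(\varphi)\,\D\varphi=\tfrac{4}{3}\widetilde\sigma_{_\#}$ by splitting each integral according to the sign of the bracketed argument, with the cross terms ($a_{21},a_{22},b_{11},b_{12}$) vanishing by parity. Your parity bookkeeping and the four surviving integrals match the paper's computation exactly (the paper works out only the $a_{11}$ and $a_{12}$ terms and notes the rest are similar), so there is nothing to correct.
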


\begin{proof}
	Taking polar coordinates 
	we obtain \eqref{System2Dpolar}, 
	where
	\begin{align*}
		\chi_2(\varphi) =& \; c^2\left( a_{11}\ABS{c}{\alpha_1}+a_{12}\ABS{s}{\alpha_2} \right) + s^2\left( b_{21}\ABS{c}{\beta_3}+b_{22}\ABS{s}{\beta_4} \right)\\ &+ sc\left( a_{21}\ABS{c}{\alpha_3} + a_{22}\ABS{s}{\alpha_4} + b_{11}\ABS{c}{\beta_1} + b_{12}\ABS{s}{\beta_2} \right),
	\end{align*}
	{again with $s:=\sin(\varphi)$, $c:=\cos(\varphi)$.}
	Applying Proposition \ref{Thm_Gen} 
	we compute $\int_0^{2\pi}\chi_2(\varphi)\D\varphi$, which gives $\frac{4}{3} \widetilde\sigma_{_\#} $. 
	Indeed, from \eqref{gen_abs_val} we obtain 
	\begin{align*}
		\int_0^{2\pi} c^2 a_{11}\ABS{c}{\alpha_1} \D\varphi &= \int_0^{\frac{\pi}{2}} c^3 a_{11}\alpha_{1_+} \D\varphi + \int_{\frac{\pi}{2}}^{\frac{3\pi}{2}} c^3 a_{11}\alpha_{1_-} \D\varphi + \int_{\frac{3\pi}{2}}^{2\pi} c^3 a_{11}\alpha_{1_+} \D\varphi \\
		&= \frac{4}{3}a_{11}\left( \alpha_{1_+}-\alpha_{1_-} \right), \\
		\int_0^{2\pi} c^2 a_{12}\ABS{s}{\alpha_2} \D\varphi &= \int_0^\pi c^2s\ a_{12}\alpha_{2_+} \D\varphi + \int_\pi^{2\pi} c^2s\ a_{12}\alpha_{2_-} \D\varphi \\
		&= \frac{2}{3}a_{12}\left( \alpha_{2_+}-\alpha_{2_-} \right),
	\end{align*}
	and similarly for the other terms. Note that the integral of the {third term on the} right-hand side of $\chi_2$ vanishes due to the symmetry of $sc$.
	Thus, we get \eqref{periodic_orbit} with $\sigma_{_\#}$ replaced by $\widetilde\sigma_{_\#}$.
\end{proof}

\subsection{$3$D system}\label{s:3D}

In this section we extend the previous results to higher dimensional systems. Recall that Proposition~\ref{prop:inv_man} and Theorem~\ref{t_per_orb} rely on hyperbolicity of the spectrum of $A(0)$ from \eqref{e:abstract} except for a simple pair of complex conjugate eigenvalues. Analogously, averaging theory can be used in this setting to obtain a normal form as in Theorem~\ref{t_averaging}. Here we follow the `direct method' and obtain bifurcation results also without normal hyperbolicity.

To simplify the exposition, we start with the absolute value $\abs{\cdot}$ and consider an extension of the planar quadratic case \eqref{e:planar0}, \eqref{e:quadnonlin}, motivated by the example in \cite{InitialPaper}, which is a simplification of a model used for ship maneuvering. As discussed in \S\ref{s:abstract}, we first assume the linear part is in normal form ---a general linear part will be considered in \S\ref{Gen_linear_part}--- which gives
\begin{equation}
	\begin{pmatrix}
		\dot{u}\\
		\dot{{v}}\\
		\dot{{\w}}\\
	\end{pmatrix} =
	\begin{pmatrix}
		c_1 u + c_2 u^2 + { c_{3} uv + c_{4} u\w } + c_5v\w  + h\left( v, \w \right)\\
		\mu v - \omega \w + c_6 uv + c_7 u\w + f\left( v, \w \right)\\
		\omega v + \mu \w + c_{8} uv + c_{9} u\w + g\left( v, \w \right)
	\end{pmatrix},
	\label{3DAbstractSystem}
\end{equation}
where $f,g$ are as in \eqref{e:quadnonlin}, $h\left( v, \w \right) = h_{11}v\abs{v} + h_{12}v\abs{\w} + h_{21}\w\abs{v} + h_{22}\w\abs{\w}$ 
and $h_{ij}, c_k$, $\forall i,j\in\{1,2\}$, $\forall k\in\{1,\ldots 9\}$, are real constants, all viewed as parameters. Again we assume $\omega\neq 0$ and take $\mu$ as the bifurcation parameter.

With linear part in normal form in the coordinates of Lemma \ref{l:cyl}, the vector field is actually smooth in the additional variable $u$. It turns out that in the generic case $c_1\neq 0$, this additional smoothness will not be relevant for the leading order analysis, while we make use of it in the degenerate case $c_1=0$.

We define the following quantities that appear in the upcoming results:
\begin{equation}\begin{aligned}
		\ogamma_{10} &= e^{\frac{2\pi c_1}{\omega}}-1, 
		&\ogamma_{20} &= \frac{e^{\frac{2\pi c_1}{\omega}}\left(e^{\frac{2\pi c_1}{\omega}}-1\right)}{c_1}  \left( c_2-\frac{c_1\rho_2}{\omega(c_1^2+4\omega^2)} \right),\\[1em]
		\ogamma_{02}  &= \frac{1}{\omega}e^{\frac{2\pi c_1}{\omega}} \int_0^{2\pi} e^{s\frac{2\mu-c_1}{\omega}}\Upsilon(s) \D s, 
		&\ogamma_{11} &= \frac{2}{3\omega^2}e^{\frac{2\pi c_1}{\omega}}
		\left[ c_1\left(2\tau_2 + \frac{P}	
		{3\omega}\mu\right) {-3\pi c_4\mu} \right] 
		+ \calO(\mu^2),\\[1em]
		\odelta_{01} &= \frac{2\pi\mu}{\omega} + \calO(\mu^2), 
		&\odelta_{02} &= \frac{2}{3\omega}\left[ \sigma_{_\#}\left(2 + \frac{6\pi}{\omega}\mu\right) + \frac{Q
		}{3\omega}\mu \right] + \calO(\mu^2),\\[1em]
		&&\odelta_{11} &= \frac{e^{\frac{2\pi c_1}{\omega}}-1}{c_1}\frac{1}{\omega(c_1^2+4\omega^2)} \big[ \omega\rho_1+R\mu \big] + \calO(\mu^2),
	\end{aligned}\label{ogammas_odeltas}\end{equation}
where we shortened the notation by lumping the weighted sums of coefficients from $f,g$, and from the smooth quadratic terms, respectively, given by 
\begin{align*}
	\tau_1&=4a_{22}+5a_{21}-5b_{12}-4b_{11}, \quad
	\tau_2=2a_{22}+a_{21}-b_{12}-2b_{11}, \quad \tau_3=a_{11}-a_{12}-b_{21}+b_{22},\\
	P& =3\pi(2\tau_2-a_{11}+b_{21})+4\tau_3,  \quad Q =-3\pi(b_{11}+a_{21})+2\tau_1, \quad\quad R=2\pi\rho_1-\rho_2,\\
	\rho_1&=c_6c_1^2-c_7c_1\omega +2c_6\omega^2-c_8c_1\omega+2c_9\omega^2,
	\quad\rho_2=c_8c_1^2-c_9c_1\omega+2c_8\omega^2+c_6c_1\omega-2c_7\omega^2, 
\end{align*}
as well as the $h$-dependent 
\[
\Upsilon(\varphi) = c_5cs+h_{11}c\abs{c}+h_{12}c\abs{s}+h_{21}s\abs{c}+h_{22}s\abs{s}.
\]
The explicit form of $\ogamma_{02}$ can be found in Appendix \ref{3D_gammas_deltas}.

\begin{theorem}\label{thm3D}
	In cylindrical coordinates $(u,v,\w)=(u,r\cos{\varphi},r\sin{\varphi})$, up to time shifts, periodic solutions to \eqref{3DAbstractSystem} with $r(0)=r_0, u(0)=u_0$ for $0\leq |\mu| \ll 1$ near $r=u=0$ are in 1-to-1 correspondence with solutions to the algebraic equation system
	\begin{align}
		0 &= \ogamma_{10} u_0 + \ogamma_{20} u_0^2 + \ogamma_{02} r_0^2 + \ogamma_{11} u_0r_0 + \mathcal{O}\left(3\right),\label{Periodic_R-a}\\
		0 &=  \odelta_{01} r_0 + \odelta_{02} r_0^2+ \odelta_{11} u_0r_0 + \mathcal{O}\left(3\right),\label{Periodic_R-b}
	\end{align}
	where $\calO(3)$ are terms of at least cubic order in $u_0,r_0$. 
\end{theorem}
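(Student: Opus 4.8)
The plan is to carry out the direct method of Proposition~\ref{Thm_Gen} and Corollary~\ref{hot2D} in the two variables $(r,u)$ simultaneously. First I pass to the cylindrical coordinates $(u,v,\w)=(u,r\cos\varphi,r\sin\varphi)$. Using $r\dot r = v\dot v + \w\dot\w$ and $r^2\dot\varphi = v\dot\w-\w\dot v$ together with the $\dot u$ equation of \eqref{3DAbstractSystem}, one obtains, with $c=\cos\varphi$, $s=\sin\varphi$,
\begin{equation*}
	\dot r = \mu r + r^2\chi_2(\varphi) + u r\,\Theta(\varphi),\qquad
	\dot\varphi = \omega + r\Omega_1(\varphi) + u\,\Xi(\varphi),\qquad
	\dot u = c_1 u + c_2 u^2 + u r(c_3 c + c_4 s) + r^2\Upsilon(\varphi),
\end{equation*}
where $\chi_2,\Omega_1$ are exactly \eqref{chi}, \eqref{Omega}, $\Upsilon$ is as defined before the theorem, and $\Theta=c_6c^2+(c_7+c_8)cs+c_9s^2$, $\Xi=c_8c^2+(c_9-c_6)cs-c_7s^2$. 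All coefficient functions are continuous and $2\pi$-periodic in $\varphi$, and the entire non-smoothness is confined to the $\varphi$-dependence.

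Since $\omega\neq0$, we have $\dot\varphi\neq0$ for $0\leq|\mu|,r,|u|\ll1$, so I reparametrise by $\varphi$ as in \eqref{new_time}, obtaining a non-autonomous planar system $r'=F(r,u,\varphi)$, $u'=G(r,u,\varphi)$ that is smooth in $(r,u)$ and merely continuous, $2\pi$-periodic, in the new time $\varphi$. As in the proof of Proposition~\ref{Thm_Gen}, Picard--Lindel\"of with continuous time dependence and the uniform contraction principle give the solutions $r(\varphi;r_0,u_0)$, $u(\varphi;r_0,u_0)$ on $[0,2\pi]$ and their Taylor expandability in $(r_0,u_0)$. Because $\{r=0\}$ is invariant, $r$ carries a factor $r_0$, and I write
\begin{align*}
	r(\varphi) &= r_0 a_1(\varphi) + r_0^2 a_2(\varphi) + u_0 r_0\, a_3(\varphi) + \calO(3),\\
	u(\varphi) &= u_0 b_1(\varphi) + u_0^2 b_2(\varphi) + r_0^2 b_4(\varphi) + u_0 r_0\, b_3(\varphi) + \calO(3),
\end{align*}
with $a_1(0)=b_1(0)=1$ and all remaining coefficients vanishing at $\varphi=0$.

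Substituting into $r'=F$, $u'=G$ and matching powers of $(r_0,u_0)$ yields a triangular cascade of linear inhomogeneous ODEs: at first order $a_1'=\tfrac{\mu}{\omega}a_1$ and $b_1'=\tfrac{c_1}{\omega}b_1$, and at second order $a_2'=\tfrac{\mu}{\omega}a_2+f_{20}a_1^2$, $a_3'=\tfrac{\mu}{\omega}a_3+f_{11}a_1b_1$, $b_2'=\tfrac{c_1}{\omega}b_2+g_{20}b_1^2$, $b_3'=\tfrac{c_1}{\omega}b_3+g_{11}a_1b_1$, $b_4'=\tfrac{c_1}{\omega}b_4+\tfrac{\Upsilon}{\omega}a_1^2$, with $\varphi$-dependent inhomogeneities $f_{20},f_{11},g_{20},g_{11}$ read off by expanding $F,G$ to second order in $(r,u)$; each is solved by variation of constants. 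Periodic orbits correspond, up to phase, to the return conditions $r(2\pi)=r_0$ and $u(2\pi)=u_0$; reading off the coefficients of $u_0,u_0^2,r_0^2,u_0r_0$ in $u(2\pi)-u_0$ gives \eqref{Periodic_R-a} with $\ogamma_{10}=b_1(2\pi)-1$, $\ogamma_{20}=b_2(2\pi)$, $\ogamma_{02}=b_4(2\pi)$, $\ogamma_{11}=b_3(2\pi)$, while the coefficients of $r_0,r_0^2,u_0r_0$ in $r(2\pi)-r_0$ give \eqref{Periodic_R-b} with $\odelta_{01}=a_1(2\pi)-1$, $\odelta_{02}=a_2(2\pi)$, $\odelta_{11}=a_3(2\pi)$. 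That minimal-period orbits are captured by the $m=1$ return map, and that the $\calO(3)$ remainder does not affect these coefficients, follow as in Proposition~\ref{Thm_Gen} and Corollary~\ref{hot2D}.

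The remaining and principal effort is the explicit evaluation of $a_i(2\pi),b_i(2\pi)$ to match \eqref{ogammas_odeltas}, which splits into two families of integrals. The non-smooth ones, $\int_0^{2\pi}\chi_2\,\D\varphi$ and $\int_0^{2\pi}\Omega_1\,\D\varphi$, are computed quadrant-by-quadrant exactly as in the proofs of Theorems~\ref{t_averaging} and \ref{1stPart}, producing $\sigma_{_\#}$ and the weighted sums $\tau_1,\tau_2,\tau_3$; for instance one finds $\int_0^{2\pi}\Omega_1\,\D\varphi=-\tfrac{4}{3}\tau_2$, which yields the leading term of $\ogamma_{11}$, and $b_4(2\pi)$ reproduces the definition of $\ogamma_{02}$ verbatim. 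The $r$--$u$ coupling additionally forces the smooth exponential--trigonometric integrals $\int_0^{2\pi}e^{\lambda\varphi}\{c^2,cs,s^2\}\,\D\varphi$ with $\lambda=c_1/\omega$, whose values produce the prefactors $\tfrac{e^{2\pi c_1/\omega}-1}{c_1}$ and the denominators $c_1^2+4\omega^2$ and assemble $\rho_1,\rho_2$. Carrying the expansion to first order in $\mu$ brings in the further weights $\int_0^{2\pi}\varphi\chi_2$, $\int_0^{2\pi}\varphi\Omega_1$ and $\int_0^{2\pi}\{c,s\}\,\D\varphi$, which combine into $P,Q,R$. I expect this simultaneous bookkeeping of quadrant-split non-smooth integrals and exponential--trigonometric integrals, with consistent truncation at $\calO(\mu^2)$, to be the main obstacle, although each individual integral is elementary.
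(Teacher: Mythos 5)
Your proposal is correct and follows essentially the same route as the paper: pass to cylindrical coordinates, reparametrize by $\varphi$, posit a power-series ansatz in $(r_0,u_0)$ whose coefficients solve a triangular cascade of linear ODEs by variation of constants, and read off \eqref{Periodic_R-a}, \eqref{Periodic_R-b} from the $2\pi$-return conditions; your spot checks (e.g.\ $\int_0^{2\pi}\Omega_1(\varphi)\,\D\varphi=-\frac{4}{3}\tau_2$ giving the leading term of $\ogamma_{11}$, and $b_4(2\pi)$ reproducing $\ogamma_{02}$) are accurate. The only cosmetic difference is that you drop the coefficients corresponding to the paper's $\gamma_{01},\delta_{10},\delta_{20}$ from the ansatz a priori, whereas the paper carries them along and shows they vanish---harmless here, since the linearization in $(r,u)$ is uncoupled and $\{r=0\}$ is invariant, so these coefficients solve homogeneous linear ODEs with zero initial data.
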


\begin{proof}
	In cylindrical coordinates $(u,v,\w)=(u,r\cos{\varphi},r\sin{\varphi})$ system \eqref{3DAbstractSystem} becomes
	\begin{align}
		\label{polar_system_3D}
		\begin{cases}
			\dot{u} &= c_1 u + c_2 u^2 + { (c_3 c u + c_4 s u)r } + \Upsilon(\varphi)r^2,\\
			\dot{r} &= \left(\mu+\chi_1(\varphi)u\right)r + \chi_2(\varphi) r^2,\\
			\dot{\varphi} &= \omega + \Omega_0(\varphi)u + \Omega_1(\varphi) r,
		\end{cases}
	\end{align}	
	where 
	$\chi_1(\varphi) = c_6c^2+(c_7+c_8)cs+c_9s^2$, $\Omega_0(\varphi) = c_8c^2+(c_9-c_6)cs-c_7s^2$, 
	and the nonsmooth functions $\chi_2(\varphi)$ and $\Omega_1(\varphi)$ are as in \eqref{chi} and \eqref{Omega}, respectively.
	
	Upon rescaling time the equations for $u$ and $r$ of the previous system become
	\begin{align}\label{eq_u_r}
		\begin{cases}
			{u}' &= \frac{du/dt}{d\varphi /dt} = \frac{c_1 u + c_2 u^2 + { (c_3 c u + c_4 s u)r } + \Upsilon(\varphi)r^2}{\omega + \Omega_0(\varphi)u + \Omega_1(\varphi) r} =:\Psi_u(u,r,\varphi),\\[10pt]
			{r}' &= \frac{dr/dt}{d\varphi /dt} = \frac{\left(\mu+\chi_1(\varphi)u\right)r + \chi_2(\varphi) r^2}{\omega + \Omega_0(\varphi)u + \Omega_1(\varphi) r} =:\Psi_r(u,r,\varphi).
		\end{cases}
	\end{align}
	
	Taylor expansion of $u'$ and $r'$ in $(u,r)=(0,0)$ up to third order gives:
	\begin{subequations}
		\begin{align} 
			\begin{split} \label{UP-a}
				u' =& \Psi_u(0,0,\varphi) + \partial_u\Psi_u(0,0,\varphi)u + \partial_r\Psi_u(0,0,\varphi)r \\
				&+ \frac{1}{2}\partial_u^2\Psi_u(0,0,\varphi)u^2 + \frac{1}{2}\partial_r^2\Psi_u(0,0,\varphi)r^2 + \partial_{ur}^2\Psi_u(0,0,\varphi)ur + \mathcal{O}\left(3\right),
			\end{split}\\[5pt]
			\begin{split} \label{UP-b}
				r' =&  \Psi_r(0,0,\varphi) + \partial_u\Psi_r(0,0,\varphi)u + \partial_r\Psi_r(0,0,\varphi)r \\
				&+ \frac{1}{2}\partial_u^2\Psi_r(0,0,\varphi)u^2 + \frac{1}{2}\partial_r^2\Psi_r(0,0,\varphi)r^2 + \partial_{ur}^2\Psi_r(0,0,\varphi)ur + \mathcal{O}\left(3\right).
			\end{split}
		\end{align}
	\end{subequations}
	
	On the other hand, and similarly to the procedure of the $2$-dimensional case, we write $u(\varphi)$ and $r(\varphi)$ as the following expansions with  coefficients $\gamma_{ij}, \delta_{ij}$:
	\begin{equation}\begin{aligned}
			u(\varphi) &= \gamma_{10}(\varphi)u_0 + \gamma_{20}(\varphi)u_0^2 + \gamma_{01}(\varphi)r_0 + \gamma_{02}(\varphi)r_0^2 + \gamma_{11}(\varphi)u_0r_0 + \mathcal{O}\left(3\right), \\
			r(\varphi) &= \delta_{10}(\varphi)u_0 + \delta_{20}(\varphi)u_0^2 + \delta_{01}(\varphi)r_0 + \delta_{02}(\varphi)r_0^2 + \delta_{11}(\varphi)u_0r_0 + \mathcal{O}\left(3\right),
			\label{UE_RE}
	\end{aligned}\end{equation}
	with the initial conditions $u(0)=u_0$ and $r(0)=r_0$, which imply $\gamma_{10}(0)=\delta_{01}(0)=1$ and the rest zero.
	
	Substituting (\ref{UE_RE}) into (\ref{UP-a}) and (\ref{UP-b}) and matching the coefficients of the powers of $u_0$ and $r_0$ we get to solve a set of ODEs in order to obtain the expressions for $\gamma_{ij}$ and $\delta_{ij}$ (see Appendix \ref{3D_gammas_deltas} for the details). Using these, the system of boundary value problems $0 = u(2\pi) - u(0)$, $0 = r(2\pi) - r(0)$ for periodic solutions precisely yields
	\eqref{Periodic_R-a}, \eqref{Periodic_R-b}, where $\ogamma_{ij}=\gamma_{ij}(2\pi)-\gamma_{ij}(0)$ and $\odelta_{ij}=\delta_{ij}(2\pi)-\delta_{ij}(0)$.
\end{proof}

The solution structure of \eqref{Periodic_R-a}, \eqref{Periodic_R-b} strongly depends on whether $c_1=0$ or not. If not, then the transverse direction is hyperbolic and Theorem~\ref{1stPart} implies a locally unique branch of periodic solutions. In the nonhyperbolic case the situation is different and we note that if $c_1=0$, then 
with 
$\gamma_{_\#} :=  2h_{21} + c_5 + \pi h_{22}$, we have
\begin{equation}\label{e:c20}
	\ogamma_{10}=0, \hspace*{0.5cm} \ogamma_{11}={-\frac{2\pi c_4}{\omega^2}\mu+}\calO(\mu^2), \hspace*{0.5cm} \ogamma_{20}=\frac{2\pi c_2}{\omega}, \hspace*{0.5cm} 
	\ogamma_{02} = -\frac{\pi\gamma_{_\#}}{\omega^2}\mu + \calO(\mu^2).
\end{equation}

\begin{corollary}\label{c:3D}
	Consider system \eqref{3DAbstractSystem} in cylindrical coordinates $(u,v,\w)=(u,r\cos{\varphi},r\sin{\varphi})$. If $c_1\neq0$, then $u=u(\varphi;\mu) =  \mathcal{O}\left(\mu^2\right)$ and the statement of Theorem~\ref{1stPart} holds true.
	If $c_1=0$ and $\omega c_2\gamma_{_\#} \mu>0$, then precisely two curves of periodic solutions bifurcate at $\mu=0$ for $\mu\sigma_{_\#}\leq 0$, each in the sense of Theorem \ref{t_per_orb}, and their initial conditions $r(0)=r_0$, $u(0)=u_0^\pm$ satisfy
	\begin{align}
		u_0^\pm &= u_0^\pm(\mu) = 
		\mp \frac{3\pi}{2\sigma_{_\#}}\sqrt{\frac{\gamma_{_\#}}{2\omega c_2} \mu^3} + \calO(\mu^{2}) =\calO(|\mu|^{3/2}),\label{e:u3D} \\
		r_0 &= r_0(\mu)= -\frac{3\pi}{2\sigma_{_\#}}\mu + \calO(|\mu|^{3/2}). \label{e:3Dr2}
	\end{align}
	In case $c_1=0$ and $\omega c_2\gamma_{_\#} \mu<0$, there is no bifurcation through $\mu$.
\end{corollary}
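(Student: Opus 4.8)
The plan is to read off both conclusions from the two scalar bifurcation equations \eqref{Periodic_R-a}--\eqref{Periodic_R-b} furnished by Theorem~\ref{thm3D}, analysing the hyperbolic case $c_1\neq0$ and the non-hyperbolic case $c_1=0$ separately, since the qualitative structure of \eqref{Periodic_R-a} changes completely between them.

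For $c_1\neq0$ the coefficient $\ogamma_{10}=e^{2\pi c_1/\omega}-1$ in \eqref{ogammas_odeltas} is nonzero, so the linear-in-$u_0$ part of \eqref{Periodic_R-a} dominates. I would solve \eqref{Periodic_R-a} for $u_0$ as a function of $(r_0,\mu)$ by the implicit function theorem; since the lowest term not involving $u_0$ is $\ogamma_{02}r_0^2$, this yields $u_0=\calO(r_0^2)$. Substituting into \eqref{Periodic_R-b} and dividing by $r_0$ (legitimate for nontrivial orbits), the coupling term $\odelta_{11}u_0r_0$ drops to order $r_0^3$, and the residual equation reduces to the planar bifurcation equation of Theorem~\ref{1stPart}, whose leading part is $\frac{2\pi}{\omega}\mu+\frac{4}{3\omega}\sigma_{_\#}r_0$. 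This reproduces \eqref{periodic_orbit}, and since the bifurcating $r_0=\calO(\mu)$ forces $u_0=\calO(\mu^2)$, it also gives $u=u(\varphi;\mu)=\calO(\mu^2)$, completing this case.

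For $c_1=0$ the relations \eqref{e:c20} show $\ogamma_{10}=0$, so \eqref{Periodic_R-a} degenerates to a relation whose leading terms are $\ogamma_{20}u_0^2+\ogamma_{02}r_0^2$ with $\ogamma_{20}=\frac{2\pi c_2}{\omega}$ and $\ogamma_{02}=-\frac{\pi\gamma_{_\#}}{\omega^2}\mu+\calO(\mu^2)$. I would first treat \eqref{Periodic_R-b} as in the planar case: divide by $r_0$ and solve by the implicit function theorem for the branch $r_0=r_0(\mu)=-\frac{3\pi}{2\sigma_{_\#}}\mu+\calO(|\mu|^{3/2})$ of \eqref{e:3Dr2}, the $\odelta_{11}u_0r_0$ term being of higher order once the anticipated size $u_0=\calO(|\mu|^{3/2})$ is inserted. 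Feeding this $r_0$ into \eqref{Periodic_R-a} leaves, to leading order, the quadratic balance $\frac{2\pi c_2}{\omega}u_0^2=\frac{\pi\gamma_{_\#}}{\omega^2}\mu\,r_0^2$, i.e.\ $u_0^2=\frac{\gamma_{_\#}}{2\omega c_2}\mu\,r_0^2$. Its two roots are exactly $u_0^\pm$ of \eqref{e:u3D}, real and distinct precisely when $\omega c_2\gamma_{_\#}\mu>0$ (equivalently $\frac{\gamma_{_\#}}{\omega c_2}\mu>0$) and $\mu\sigma_{_\#}\le0$ (so that $r_0\ge0$); each root then corresponds, through the one-to-one correspondence of Theorem~\ref{thm3D}, to a bifurcating curve in the sense of Theorem~\ref{t_per_orb}. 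When $\omega c_2\gamma_{_\#}\mu<0$ the balance admits no real solution, so no periodic orbit exists and there is no bifurcation.

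The step I expect to be delicate is the bookkeeping of orders in the degenerate case, since the solution is non-analytic in $\mu$: the natural scaling is $r_0=\calO(\mu)$ but $u_0=\calO(|\mu|^{3/2})$. A clean way to make this rigorous is the blow-up rescaling $r_0=\mu\hat r$, $u_0=|\mu|^{3/2}\hat u$, after which \eqref{Periodic_R-a}--\eqref{Periodic_R-b} become a regular system in $(\hat r,\hat u)$ depending on $|\mu|^{1/2}$, so that solving \eqref{Periodic_R-b} for $\hat r$ and \eqref{Periodic_R-a} for $\hat u^\pm$ at $\mu=0$ and then applying the implicit function theorem in $|\mu|^{1/2}$ pins down the two branches. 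The genuine subtlety is that, under this scaling, the leading balance in \eqref{Periodic_R-a} sits at order $\mu^3$, and a pure $r_0^3$ contribution from the $\calO(3)$ remainder would a priori also be of order $\mu^3$; I would therefore have to verify from the explicit coefficients in \eqref{ogammas_odeltas} and the appendix that when $c_1=0$ the cubic coefficients in \eqref{Periodic_R-a} carry an additional factor of $\mu$—just as $\ogamma_{02}$ does once $\ogamma_{10}$ vanishes—so that $r_0^3$ enters only at order $\mu^4$ and the two-term balance $\ogamma_{20}u_0^2+\ogamma_{02}r_0^2=0$ is indeed the dominant one. This, together with the generic nondegeneracy $c_2\neq0$, is what fixes \eqref{e:u3D}.
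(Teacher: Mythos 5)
Your overall strategy is the same as the paper's: both read the two cases off the algebraic system \eqref{Periodic_R-a}--\eqref{Periodic_R-b} of Theorem~\ref{thm3D}, and your treatment of $c_1\neq0$ is identical to the paper's. In the degenerate case $c_1=0$ you reverse the elimination order: the paper first solves \eqref{Periodic_R-aa} as a quadratic in $u_0$ with $r_0$ as a parameter, obtaining \eqref{exp_u0_of_r0}, and only then substitutes into the radial equation, whereas you solve the radial equation first under the ansatz $u_0=\calO(|\mu|^{3/2})$ and justify it by blow-up. The paper's order is slightly more robust for the claims ``precisely two curves'' and ``no bifurcation'', since the quadratic formula parametrizes \emph{all} candidate solutions without any scaling ansatz, while your blow-up a priori only detects solutions inside the prescribed cone; this is repairable (solve the radial equation for $r_0(u_0,\mu)$ by the implicit function theorem for all small $u_0$), so it is not the substantive point.

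The substantive point is the one you flag at the end, and you should know that the paper does \emph{not} resolve it: its proof simply absorbs all cubic terms into $\calO(3)$ and discards them, which is legitimate only when $r_0\ll|\mu|$, not on the branch $r_0\sim|\mu|$, where $\ogamma_{03}r_0^3\sim\mu^3$ is exactly the order of the retained terms. Worse, the verification you hope for --- that the cubic coefficients gain a factor $\mu$ at $c_1=0$, as $\ogamma_{02}$ does in \eqref{e:c20} --- fails in general. Repeating the expansion of Appendix~\ref{3D_gammas_deltas_second} at $c_1=\mu=0$ (so $\gamma_{10}=\delta_{01}\equiv1$, $\gamma_{01}\equiv0$) gives
\begin{equation*}
\ogamma_{03}\big|_{\mu=0}
=\frac{1}{\omega^{2}}\int_0^{2\pi}\Big[\,2\,\Upsilon(s)\!\int_0^{s}\chi_2(\tau)\,\D\tau
+\big(c_3\cos s+c_4\sin s\big)\!\int_0^{s}\Upsilon(\tau)\,\D\tau
-\Upsilon(s)\,\Omega_1(s)\Big]\,\D s ,
\end{equation*}
and the mean-zero argument that kills $\ogamma_{02}|_{\mu=0}=\frac1\omega\int_0^{2\pi}\Upsilon\,\D s$ does not apply to these products: e.g.\ for $h_{21}=a_{11}=c_3=1$ and all other nonlinear coefficients zero one computes $\ogamma_{03}|_{\mu=0}=-4/\omega^{2}$. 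Carrying this term through the balance on $r_0=-\frac{3\pi}{2\sigma_{_\#}}\mu+\calO(|\mu|^{3/2})$ replaces the coefficient $\frac{\gamma_{_\#}}{2\omega c_2}$ in \eqref{e:u3D}, and in the condition $\omega c_2\gamma_{_\#}\mu>0$, by $\frac{\gamma_{_\#}}{2\omega c_2}+\frac{3\omega\,\ogamma_{03}|_{\mu=0}}{4c_2\sigma_{_\#}}$; in the example above this equals $-\frac{1}{2\omega c_2}$, of \emph{opposite} sign, so even the existence region is reversed. So your concern is a genuine gap --- one your proposal shares with, indeed inherits from, the paper's own proof --- and closing it requires computing $\ogamma_{03}$ and correcting \eqref{e:u3D} and the sign condition accordingly (or identifying extra structure forcing $\ogamma_{03}|_{\mu=0}=0$, which does not hold generically), rather than verifying that the cubic terms are negligible.
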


\begin{proof}
	In the (transversely) hyperbolic case $c_1\neq 0$ we have $\ogamma_{10}\neq 0$, and thus one may solve \eqref{Periodic_R-a} for $u_0$ by the implicit function theorem as $u_0=u_0(r_0) =  \calO(r_0^2)$.
	Substitution into \eqref{Periodic_R-b} changes the higher order term only, so that to leading order we obtain the same problem as in Theorem~\ref{1stPart} with solution given by \eqref{periodic_orbit}. The stability statement of Theorem~\ref{1stPart} holds true from the existence of a $2$-dimensional Lipschitz continuous invariant manifold given by Proposition \ref{prop:inv_man}.
	
	We now consider $c_1=0$. Using \eqref{e:c20} we can cast \eqref{Periodic_R-a},  \eqref{Periodic_R-b} as
	\begin{align}
		0 &= \frac{2\pi c_2}{\omega} u_0^2 - \frac{\pi\gamma_{_\#}}{\omega^2} \mu r_0^2  {-\frac{2\pi c_4}{\omega^2}\mu u_0r_0} + \mathcal{O}\left(\mu^2 r_0^2\right)  + \mathcal{O}\left(3\right),\label{Periodic_R-aa}\\
		0 &=  \odelta_{01} r_0 + \frac{4\sigma_{_\#}}{3\omega} r_0^2 + \mathcal{O}\big(|u_0r_0| + |\mu r_0| (|u_0|+ |r_0|)\big)  + \mathcal{O}\left(3\right),\label{Periodic_R-bb}
	\end{align}
	so that we may solve \eqref{Periodic_R-aa} to leading order as
	\begin{equation}
		\label{exp_u0_of_r0}
		u_0 = u_0^\pm(r_0;\mu) = \frac{c_4}{\omega c_2}\mu r_0 \pm r_0\sqrt{\frac{c_4^2}{4c_2^2}\mu^2+\frac{\gamma_{_\#}}{2\omega c_2} \mu} + \calO(|\mu|) = \pm r_0\sqrt{\frac{\gamma_{_\#}}{2\omega c_2} \mu} + \calO(|\mu|).
	\end{equation}
	Substitution into \eqref{Periodic_R-b} gives a factor $r_0$ corresponding to the trivial solution $u_0=r_0=0$. For non-trivial solutions we divide by $r_0\neq 0$ and solve the leading order part as  
	$$ r_0 = - \frac{\odelta_{01}}{\frac{4\sigma_{_\#}}{3\omega} + \calO(\sqrt{\mu})} =  - \frac{3\pi}{2\sigma_{_\#}}\mu + \calO(|\mu|^{3/2}).$$
	Next, we substitute this into \eqref{exp_u0_of_r0} and note that perturbation by the higher order terms yields \eqref{e:u3D}, \eqref{e:3Dr2}. These give positive $r_0$ in case $\mu\sigma_{_\#}<0$ and therefore real valued $u_0$ in case $\omega c_2\gamma_{_\#} \mu>0$. However, if $\omega c_2\gamma_{_\#} \mu<0$ then for any $0<|\mu|\ll 1$ either $r_0<0$ or $u_0$ is imaginary.
\end{proof}

We subsequently consider the degenerate case $\sigma_{_\#}=0$, but assume $c_1\neq 0$, 
which generalizes Theorem~\ref{2ndPart} to the present $3$-dimensional setting. 
We will show that the generalization of $\sigma_2$ is given by $\omega^2\tcA$, where
\begin{equation}\label{tilde_o}
	\tcA := \tilde{\delta}_{03} - \tilde{\delta}_{11}\frac{\tilde{\gamma}_{02}}{\ogamma_{10}},
\end{equation}
with $\ogamma_{10}$ from \eqref{ogammas_odeltas}, and 
\begin{equation*}
	\begin{aligned}
		\tilde{\delta}_{11} &:= \odelta_{11}|_{\mu=0} =\frac{e^{\frac{2\pi c_1}{\omega}}-1}{c_1}\frac{\rho_1}{c_1^2+4\omega^2}, \hspace*{1cm}  \tilde{\gamma}_{02} := \ogamma_{02}|_{\mu=0} =\frac{1}{\omega}e^{\frac{2\pi c_1}{\omega}} \int_0^{2\pi}e^{-s\frac{c_1}{\omega}}\Upsilon(s) \D s,\\
		\tilde{\delta}_{03} &:= \cA + \frac{1}{\omega^2}\int_0^{2\pi} \chi_1(s)\int_0^s e^{c_1\frac{s-\tau}{\omega}}\Upsilon(\tau) \D\tau\D s,
	\end{aligned}
\end{equation*}
where $\cA=\frac{2}{\omega^2}\int_0^{2\pi}\chi_2(s) \int_0^{s}\chi_2(\tau)\D\tau \D s-\frac{1}{\omega^2}\int_0^{2\pi}\chi_2(s)\Omega_1(s)\D s$, as for \eqref{Thm2_eq_r0}. 
Comparing $\tcA$ with $\cA$, we expect $\tcA \neq \cA$, as a results of the coupling with the additional variable $u$. We omit here the fully explicit approach for $\tcA$, since the expressions become too lengthy for practical uses. However, for illustration, we consider the simpler case $h=0$  in \eqref{3DAbstractSystem}, which yields 
\begin{equation*}
	\tilde{\delta}_{03} = \cA + \frac{ c_5\pi(c_6+c_9)}{c_1^2+4\omega^2}\left(e^{\frac{2\pi}{\omega}c_1}-1\right),  \hspace*{1cm}
	\tilde{\delta}_{11}\frac{\tilde{\gamma}_{02}}{\ogamma_{10}} =  \frac{c_5\rho_1\omega}{c_1(c_1^2+4\omega^2)^2} \left(e^{\frac{2\pi}{\omega}c_1}-1\right),
\end{equation*}
and thus,
\begin{equation*}
	\tcA = \cA + \frac{c_5\left(e^{\frac{2\pi}{\omega}c_1}-1\right)}{c_1^2+4\omega^2}\left[ \pi (c_6+c_9) - \frac{\rho_1\omega}{c_1(c_1^2+4\omega^2)} \right].
\end{equation*}

\begin{corollary}\label{Second:c:3D}
	Consider \eqref{3DAbstractSystem} in cylindrical coordinates $(u,v,\w)=(u,r\cos{\varphi},r\sin{\varphi})$ and $\sigma_{_\#}=0$. If $c_1\neq0$, then $u=u(\varphi;\mu) =  \mathcal{O}\left(\mu^2\right)$ and the statement of Theorem~\ref{2ndPart} holds true {with} $\sigma_2$ {replaced} by $\omega^2\tcA$.
\end{corollary}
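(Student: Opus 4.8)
The plan is to follow the transverse-hyperbolic part of the proof of Corollary~\ref{c:3D}, but to push the expansions underlying Theorem~\ref{thm3D} one order further, exactly as in the passage from Theorem~\ref{1stPart} to Theorem~\ref{2ndPart}. The reason is that $\sigma_{_\#}=0$ makes the quadratic radial coefficient $\odelta_{02}$ vanish at $\mu=0$ (it is $\calO(\mu)$ by \eqref{ogammas_odeltas}), so the leading nontrivial balance in the radial bifurcation equation becomes cubic in $r_0$; this is precisely the mechanism of Theorem~\ref{2ndPart}, transplanted to the coupled $(u,r)$ system \eqref{polar_system_3D}. Since $c_1\neq0$ we have $\ogamma_{10}=e^{2\pi c_1/\omega}-1\neq0$, so the transverse direction stays hyperbolic and $u_0$ can be slaved to $r_0$.

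Concretely, I would extend the ansatz \eqref{UE_RE} to include the coefficient functions of combined degree three, substitute into the third-order Taylor expansions \eqref{UP-a}, \eqref{UP-b} of $\Psi_u,\Psi_r$, match powers of $u_0,r_0$, and solve the resulting linear ODEs by variation of constants with initial data $\gamma_{10}(0)=\delta_{01}(0)=1$ and the rest zero. Evaluating at $\varphi=2\pi$ produces the cubic coefficients $\tilde{\delta}_{03}$, $\tilde{\delta}_{11}$, $\tilde{\gamma}_{02}$ of \eqref{tilde_o}; in particular $\tilde{\delta}_{03}$ picks up the contribution $\frac{1}{\omega^2}\int_0^{2\pi}\chi_1(s)\int_0^s e^{c_1(s-\tau)/\omega}\Upsilon(\tau)\,\D\tau\,\D s$, which records how the $r^2$-forcing of the $u$-equation feeds back through the $\chi_1 ur$-term into the cubic radial coefficient. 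Because $\ogamma_{10}\neq0$, the first equation \eqref{Periodic_R-a} is solved by the implicit function theorem for $u_0=u_0(r_0;\mu)=-\frac{\ogamma_{02}}{\ogamma_{10}}r_0^2+\calO(\ldots)$, hence $u_0=\calO(r_0^2)$. Substituting this into \eqref{Periodic_R-b}, using $\odelta_{01}=\frac{2\pi\mu}{\omega}+\calO(\mu^2)$ and $\odelta_{02}=\calO(\mu)$, and dividing the nontrivial solutions by $r_0\neq0$, the cross term $\odelta_{11}u_0r_0$ contributes $-\tilde{\delta}_{11}\frac{\tilde{\gamma}_{02}}{\ogamma_{10}}r_0^3$ at $\mu=0$, which together with $\odelta_{03}r_0^3$ yields the reduced equation $0=\frac{2\pi}{\omega}\mu+\tcA r_0^2+\calO(\ldots)$ with $\tcA$ as in \eqref{tilde_o}. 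This is exactly \eqref{Thm2_eq} with $\cA$ replaced by $\tcA$.

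From here the conclusion mirrors Theorem~\ref{2ndPart}: the implicit function theorem gives a unique branch $\mu(r_0)=-\frac{\omega\tcA}{2\pi}r_0^2+\calO(r_0^3)$, whose inversion produces $r_0=\sqrt{\frac{2\pi\omega}{\sigma_2}\mu}+\calO(\mu)$ with $\sigma_2$ replaced by $-\omega^2\tcA$ (consistent with $\cA=-\sigma_2/\omega^2$ there), and reality of the square root fixes the criticality through the sign of $\omega\cdot(-\omega^2\tcA)$. The slaving $u_0=\calO(r_0^2)$ shows the orbit hugs the two-dimensional manifold of Proposition~\ref{prop:inv_man}, so $u=u(\varphi;\mu)$ is enslaved to the radial amplitude; the exchange of stability within the manifold follows from monotonicity of the reduced one-dimensional Poincar\'e map, with overall attractivity governed by the sign of $c_1$ via Proposition~\ref{prop:inv_man}. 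I expect the main obstacle to be the cubic bookkeeping of the second step: one must verify that all remaining terms (notably those of order $\mu r_0^2$, the quadratic-in-$r_0$ part of $u_0$, and genuine cubics) are truly higher order under the scaling $r_0\sim\sqrt{\mu}$, and that the $u$–$r$ coupling collects correctly into the single coefficient $\tcA$ rather than producing extra leading contributions.
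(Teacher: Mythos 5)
Your proposal is correct and follows essentially the same route as the paper's proof: extend the expansions of Theorem~\ref{thm3D} to cubic order, use $c_1\neq 0$ (hence $\ogamma_{10}\neq 0$) to slave $u_0=-\frac{\ogamma_{02}}{\ogamma_{10}}r_0^2+\calO(r_0^3)$ via the implicit function theorem, and substitute into the radial equation so that the cross term $\odelta_{11}u_0r_0$ combines with $\odelta_{03}r_0^3$ to give the reduced equation $0=\frac{2\pi}{\omega}\mu+\tcA r_0^2+\calO\left(\mu^2+\mu r_0+r_0^3\right)$ with $\tcA$ exactly as in \eqref{tilde_o}. Your identification of the extra contribution to $\tilde{\delta}_{03}$ and the appeal to Proposition~\ref{prop:inv_man} for stability also match the paper.
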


\begin{proof}
	Upon rescaling time the equations for $u,r$ in cylindrical coordinates of \eqref{3DAbstractSystem} become \eqref{eq_u_r}. Similarly to the proof of Theorem \ref{thm3D}, we compute the Taylor expansion of $u'$ and $r'$ in $(u,r)=(0,0)$ up to forth order (see Appendix \ref{3D_gammas_deltas_second} for the details) and we write $u(\varphi)$ and $r(\varphi)$ as the following expansions:
	\begin{equation}\begin{aligned}	u(\varphi) =& \gamma_{10}(\varphi)u_0 + \gamma_{20}(\varphi)u_0^2 + \gamma_{30}(\varphi)u_0^3 + \gamma_{01}(\varphi)r_0 + \gamma_{02}(\varphi)r_0^2 + \gamma_{03}(\varphi)r_0^3 \\
			&+ \gamma_{11}(\varphi)u_0r_0 + \gamma_{21}(\varphi)u_0^2r_0 + \gamma_{12}(\varphi)u_0r_0^2 + \mathcal{O}\left(4\right), \\[0.5em]
			r(\varphi) =& \delta_{10}(\varphi)u_0 + \delta_{20}(\varphi)u_0^2 + \delta_{30}(\varphi)u_0^3 + \delta_{01}(\varphi)r_0 + \delta_{02}(\varphi)r_0^2 + \delta_{03}(\varphi)r_0^3 \\
			&+ \delta_{11}(\varphi)u_0r_0 + \delta_{21}(\varphi)u_0^2r_0 + \delta_{12}(\varphi)u_0r_0^2 + \mathcal{O}\left(4\right),
			\label{UE_RE_Second}
	\end{aligned}\end{equation}
	with the initial conditions $u(0)=u_0$ and $r(0)=r_0$, which imply $\gamma_{10}(0)=\delta_{01}(0)=1$ and the rest zero. With these expressions we compute, as before, the functions $\gamma_{ij}$ and $\delta_{ij}$ $\forall i,j\in\N_0$ such that $i+j=3$. Note that the others are the same as for Theorem \ref{thm3D}. The periodic solutions with $r(0)=r_0$, $u(0)=u_0$ for $0\leq |\mu| \ll 1$ near $r=u=0$ are in 1-to-1 correspondence with solutions to the algebraic equation system
	\begin{align}
		0 &= \ogamma_{10} u_0 + \ogamma_{20} u_0^2 + \ogamma_{30} u_0^3 + \ogamma_{02} r_0^2 + \ogamma_{03} r_0^3 + \ogamma_{11} u_0r_0 + \ogamma_{21} u_0^2r_0 + \ogamma_{12} u_0r_0^2 + \mathcal{O}\left(4\right),\label{Periodic_R-a3}\\
		0 &=  \odelta_{01} r_0 + \odelta_{02} r_0^2 + \odelta_{03} r_0^3 + \odelta_{11} u_0r_0 + \odelta_{21} u_0^2r_0 + \odelta_{12} u_0r_0^2 + \mathcal{O}\left(4\right),\label{Periodic_R-b3}
	\end{align}
	where $\calO(4)$ are terms of at least fourth order in $u_0$, $r_0$, and $\ogamma_{ij}=\gamma_{ij}(2\pi)-\gamma_{ij}(0)$, $\odelta_{ij}=\delta_{ij}(2\pi)-\delta_{ij}(0)$.  
	Moreover, since $c_1\neq 0$ we have $\ogamma_{10}\neq 0$. Therefore, we may solve \eqref{Periodic_R-a3} for $u_0$ by the implicit function theorem as $u_0=-\frac{\ogamma_{02}}{\ogamma_{10}}r_0^2+\calO\left(r_0^3\right)=\calO\left(r_0^2\right)$.
	
	Substitution into \eqref{Periodic_R-b3} and dividing out $r_0\neq 0$ yield 
	\begin{equation*}
		0 =  \odelta_{01} + \odelta_{02} r_0 + \left(\odelta_{03} - \odelta_{11}\frac{\ogamma_{02}}{\ogamma_{10}}\right)r_0^2 + \mathcal{O}\left(3\right),
	\end{equation*}
	which we rewrite, to leading order and similarly to \eqref{Thm2_eq_r0} in Theorem~\ref{2ndPart}, as
	\begin{equation}\label{Cor_deg}
		0 = \frac{2\pi}{\omega}\mu + \tcB r_0 + \tcA r_0^2 + \mathcal{O}\left(\mu^2 + \mu r_0 + r_0^3\right),
	\end{equation}
	where $\tcB=\odelta_{02}|_{\mu=0}$, which vanishes for $\sigma_{_\#}=0$ analogous to $\cB$ in \eqref{Thm2_eq_r0}, and $\tcA$ is as defined in \eqref{tilde_o}; the expression for $\odelta_{03}$ stems from \eqref{delta_03}.
	Hence, the solution for \eqref{Cor_deg} is given by \eqref{r_2nd_HB} replacing $\sigma_2$ by $\omega^2\tcA$, which is assumed to be nonzero.
	The stability statement of Theorem~\ref{1stPart} holds true from the existence of a $2$-dimensional Lipschitz continuous invariant manifold given by Proposition \ref{prop:inv_man}.
\end{proof}

Lastly, we use these results to extend system \eqref{3DAbstractSystem} to a higher order model with the generalized absolute value \eqref{gen_abs_val} as follows
\begin{equation}
	\begin{pmatrix}
		\dot{u}\\
		\dot{{v}}\\
		\dot{{\w}}\\
	\end{pmatrix} =
	\begin{pmatrix}
		c_1 u + c_2 u^2 + { c_3 uv + c_4 u\w } + c_5 v\w + h(v, \w; \gamma)\\
		\mu v - \omega \w + c_6 uv + c_7 u\w + f\left( v, \w; \alpha \right) + f_q\left( v, \w\right) + f_c\left( v, \w\right) \\
		\omega v + \mu \w + c_8 uv + c_9 u\w + g\left( v, \w; \beta \right) + g_q\left( v, \w\right) + g_c\left( v, \w \right) \\
	\end{pmatrix},
	\label{3DAbstractSystem_General}
\end{equation}
where $f\left( v, \w; \alpha \right)$ and $g\left( v, \w; \beta \right)$ are \eqref{f_general} and \eqref{g_general}, respectively, and the functions $f_q,g_q,f_c,g_c$ are as in system \eqref{General2D_AV}. The expression of $h$ is analogous to $f,g$. {We recall also $\widetilde\sigma_{_\#}$ from \eqref{e:tildesig}.}

\begin{corollary}
	\label{Generalization_3D}
	If $\widetilde\sigma_{_\#}\neq 0$, the statement of Corollary \ref{c:3D} for system \eqref{3DAbstractSystem_General} holds true with $\sigma_{_\#}$ replaced by $\widetilde\sigma_{_\#}$. 
\end{corollary}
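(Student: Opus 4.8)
The plan is to run the proof of Corollary~\ref{c:3D} with its architecture unchanged, substituting only the integral evaluations that produce the leading-order coefficients. First I would pass to cylindrical coordinates $(u,v,\w)=(u,r\cos\varphi,r\sin\varphi)$ and rescale time exactly as in Theorem~\ref{thm3D}, obtaining a system of the form \eqref{eq_u_r}. The key observation is that the algebraic reduction behind Theorem~\ref{thm3D} and Corollary~\ref{c:3D} depends on $\chi_2,\Omega_1$ and $\Upsilon$ only through a small number of trigonometric integrals; replacing $\abs{\cdot}$ by the generalized modulus \eqref{gen_abs_val} and adjoining the smooth parts $f_q,g_q,f_c,g_c$ modifies these functions but leaves intact the polynomial structure of the expansions \eqref{UE_RE} in $u_0,r_0$.

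Second, I would recompute the relevant integrals. For the non-smooth quadratic contribution the calculation is precisely the quadrant-wise integration from the proof of Theorem~\ref{Thm0_General}, yielding $\int_0^{2\pi}\chi_2(\varphi)\,\D\varphi=\frac{4}{3}\widetilde\sigma_{_\#}$ in place of $\frac{4}{3}\sigma_{_\#}$. The additional smooth quadratic terms $f_q,g_q$ enter $\chi_2,\Omega_1$ only as odd-degree trigonometric monomials (as in the proof of Theorem~\ref{t_averaging}), which integrate to zero over $[0,2\pi]$ and hence do not affect the leading quadratic coefficient $\odelta_{02}$. The smooth cubic terms $f_c,g_c$ appear solely at the next order, through $\chi_3,\Omega_2$, so that, exactly as in Corollaries~\ref{hot2D} and~\ref{thm_cubic}, they perturb the bifurcation equation only at $\calO(r_0^3)$ and leave the leading-order analysis untouched. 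The generalized modulus in $h$ likewise replaces $\Upsilon$, and with it the coefficient $\gamma_{_\#}$, by their analogues built from the slope differences of $h$.

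Third, with these leading coefficients in hand the endgame is identical to Corollary~\ref{c:3D}. In the transversely hyperbolic case $c_1\neq0$ one has $\ogamma_{10}\neq0$, so the implicit function theorem solves \eqref{Periodic_R-a} for $u_0=\calO(r_0^2)$; substituting into \eqref{Periodic_R-b} reproduces to leading order the planar bifurcation equation of Theorem~\ref{1stPart}, now governed by $\widetilde\sigma_{_\#}$, whence \eqref{periodic_orbit} and the asserted criticality follow, and stability is inherited from the Lipschitz invariant manifold of Proposition~\ref{prop:inv_man}. In the non-hyperbolic case $c_1=0$ the quadratic system \eqref{Periodic_R-aa}, \eqref{Periodic_R-bb} is recovered with $\sigma_{_\#}$ and $\gamma_{_\#}$ replaced by their generalized counterparts, and the two-branch conclusion \eqref{e:u3D}, \eqref{e:3Dr2} follows by the same elimination of $u_0$.

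The main obstacle is essentially bookkeeping: one must confirm that passing to the generalized modulus introduces no new even-order trigonometric contributions that would survive integration and thereby invalidate the claim that the smooth quadratic terms and all cubic terms remain of higher order. Since each ingredient was already established in isolation---the quadrant integration in Theorem~\ref{Thm0_General}, the vanishing of smooth-quadratic averages in Theorem~\ref{t_averaging}, and the harmlessness of cubic terms in Corollaries~\ref{hot2D} and~\ref{thm_cubic}---the proof reduces to verifying that these facts combine consistently within the $3$D reduction, requiring no genuinely new estimate.
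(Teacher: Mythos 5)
Your proposal is correct and takes essentially the same approach as the paper, whose entire proof reads ``The proof follows from Theorems \ref{Thm0_General} and \ref{thm3D} and Corollary \ref{thm_cubic}'' --- i.e., exactly the combination of ingredients you assemble. Your write-up simply makes explicit what the paper leaves implicit: the quadrant-wise integration giving $\tfrac{4}{3}\widetilde\sigma_{_\#}$, the vanishing of the smooth quadratic averages, the higher-order nature of the cubic terms, the generalized analogue of $\gamma_{_\#}$, and the unchanged endgame of Corollary \ref{c:3D} in both the $c_1\neq 0$ and $c_1=0$ cases.
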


\begin{proof}
	The proof follows from Theorems \ref{Thm0_General} and \ref{thm3D} and Corollary \ref{thm_cubic}.
\end{proof}

This concludes our analysis for the $3$-dimensional case, which paves the way for the $n$-dimensional case discussed thereafter.

\subsection{$n$D system}\label{s:nD}

We consider the $n$-dimensional generalization of system \eqref{3DAbstractSystem} with additional component $u=(u_1,\cdots,u_{n-2})\in\R^{ n-2}$ given by
\begin{equation}
	\begin{pmatrix}
		\dot{u}\\
		\dot{{v}}\\
		\dot{{\w}}\\
	\end{pmatrix} =
	\begin{pmatrix}
		\tA u + U(u,v,\w)\\
		\mu v - \omega \w + \sum_{i=1}^{n-2}({c_6}_i u_iv + {c_7}_i u_i\w) + \tilde f\left( v, \w \right)\\
		\omega v + \mu \w + \sum_{i=1}^{n-2}({c_8}_i u_iv + {c_9}_i u_i\w) + \tilde g\left( v, \w \right)
	\end{pmatrix},
	\label{nDAbstractSystem}
\end{equation}
where $\tA=({c_1}_{ij})_{1\leq i,j\leq n-2}$ is an $(n-2)\times(n-2)$ matrix 
and $U: \R^{n-2}\times\R\times\R \longrightarrow \R^{n-2}$ is a nonlinear function, smooth in $u$ and possibly nonsmooth in $v,\w$ with absolute values as in \eqref{3DAbstractSystem}. Hence, $U(u,v,\w) = \calO(2)$, 
where $\calO(2)$ are terms of at least second order in $u_i,v,\w$. The constants ${c_1}_{ij}, {c_6}_i, {c_7}_i, {c_8}_i, {c_9}_i$ are all real $\forall i,j\in\{1,\cdots, n-2\}$, and the functions $\tilde f, \tilde g$ are of the same form as the nonlinear part of system \eqref{General2D_AV}. 

We present now analogous results as before for this $n$-dimensional case.
However, we refrain from explicitly determining the coefficients involved.

\begin{theorem}
	\label{Thm_nD}
	Consider \eqref{nDAbstractSystem} in cylindrical coordinates $(u,v,\w)=(u,r\cos{\varphi},r\sin{\varphi})$ 
	analogous to Theorem \ref{thm3D} with $u\in\R^{n-2}$. Up to time shifts, periodic solutions to \eqref{nDAbstractSystem} with $r(0)=r_0$, $u(0)=u_0\in\R^{n-2}$, 
	for $0\leq |\mu|, r_0, |u_0| \ll 1$ are in 1-to-1 correspondence with solutions to the algebraic $(n-1)$-dimensional system given by equations analogous to \eqref{Periodic_R-a} and \eqref{Periodic_R-b}, where $\odelta_{01},\odelta_{02}$ are scalars and $\ogamma_{10}, \ogamma_{11}, \ogamma_{02}, \ogamma_{20}, \odelta_{11}$ are linear maps and quadratic forms in $n-2$ dimensions.
\end{theorem}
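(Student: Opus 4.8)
The strategy is to transcribe the proof of Theorem~\ref{thm3D} almost verbatim, promoting the scalar transverse variable to the vector $u\in\R^{n-2}$ and tracking the tensorial type of every expansion coefficient. First I pass to cylindrical coordinates $(u,v,\w)=(u,r\cos\varphi,r\sin\varphi)$ in \eqref{nDAbstractSystem}, which yields
\[
\begin{cases}
\dot u = \tA u + \calO(2),\\
\dot r = \big(\mu + \chi_1(\varphi)\cdot u\big)r + \chi_2(\varphi)r^2 + \dots,\\
\dot\varphi = \omega + \Omega_0(\varphi)\cdot u + \Omega_1(\varphi)r + \dots,
\end{cases}
\]
where $\chi_1,\Omega_0$ are now $\R^{n-2}$-valued row functions built from the coupling constants ${c_6}_i,\dots,{c_9}_i$, while $\chi_2,\Omega_1$ remain the scalar non-smooth functions \eqref{chi}, \eqref{Omega}. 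Since $\omega\neq0$ we have $\dot\varphi\neq0$ for $0\le r,|u|,|\mu|\ll1$, so I rescale time by $\dot\varphi$ to obtain $u'=\Psi_u(u,r,\varphi)$, $r'=\Psi_r(u,r,\varphi)$ exactly as in \eqref{eq_u_r}, now with $\Psi_u$ being $\R^{n-2}$-valued. Because the linear part is in normal form, the non-smoothness sits entirely in $\varphi$; hence $\Psi_u,\Psi_r$ are $C^2$ in $(u,r)$ and merely continuous in $\varphi$, and, as in Proposition~\ref{Thm_Gen}, Picard--Lindel\"of with continuous time dependence \cite{Hartman} provides locally unique solutions admitting a Taylor expansion in the initial data $(u_0,r_0)$.

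Next I expand $\Psi_u,\Psi_r$ at $(u,r)=(0,0)$ to second order and posit
\begin{align*}
u(\varphi) &= \gamma_{10}(\varphi)u_0 + \gamma_{20}(\varphi)[u_0,u_0] + \gamma_{01}(\varphi)r_0 + \gamma_{02}(\varphi)r_0^2 + \gamma_{11}(\varphi)u_0\,r_0 + \calO(3),\\
r(\varphi) &= \delta_{10}(\varphi)u_0 + \delta_{20}(\varphi)[u_0,u_0] + \delta_{01}(\varphi)r_0 + \delta_{02}(\varphi)r_0^2 + \delta_{11}(\varphi)u_0\,r_0 + \calO(3),
\end{align*}
where the coefficients now carry type: $\gamma_{10},\gamma_{11}$ are linear maps on $\R^{n-2}$, $\gamma_{20}$ an $\R^{n-2}$-valued quadratic form, $\gamma_{01},\gamma_{02}\in\R^{n-2}$, while $\delta_{10},\delta_{11}$ are linear functionals and $\delta_{01},\delta_{02}$ are scalars. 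Substituting and matching powers of $(u_0,r_0)$ produces, for each coefficient, a linear inhomogeneous ODE whose homogeneous part is governed by the matrix propagator $e^{\tA\varphi/\omega}$ (replacing the scalar $e^{c_1\varphi/\omega}$ of the $3$D case); these are integrated by variation of constants exactly as in Appendix~\ref{3D_gammas_deltas}, the only change being that scalar products become matrix--vector and matrix--matrix products.

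Finally, imposing $2\pi$-periodicity via $u(2\pi)=u(0)$ (an $\R^{n-2}$ equation) and $r(2\pi)=r(0)$ (one scalar equation) yields the $(n-1)$-dimensional algebraic system of the form \eqref{Periodic_R-a}, \eqref{Periodic_R-b} with $\ogamma_{ij}=\gamma_{ij}(2\pi)-\gamma_{ij}(0)$, $\odelta_{ij}=\delta_{ij}(2\pi)-\delta_{ij}(0)$ of the asserted types, and as in Theorem~\ref{thm3D} the $m=1$ computation already captures every periodic orbit, giving the claimed $1$-to-$1$ correspondence. The invertibility of $\tA$ at $\mu=0$ from Hypothesis~\ref{h:AG} makes $\ogamma_{10}|_{\mu=0}=e^{2\pi\tA/\omega}-\mathrm{Id}$ invertible, which is what later permits elimination of $u_0$. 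I expect the only genuine obstacle to be bookkeeping: verifying that the matrix variation-of-constants with propagator $e^{\tA\varphi/\omega}$ reproduces the scalar computation termwise, and keeping straight which coefficients are scalars, vectors, linear maps, or quadratic forms --- everything else is a direct transcription of the $3$D argument.
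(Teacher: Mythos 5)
Your proposal is correct and takes essentially the same approach as the paper: the paper's own proof of Theorem \ref{Thm_nD} is only a brief remark declaring the argument analogous to Theorem \ref{thm3D}, setting up the boundary value problem $0=u(2\pi)-u(0)$ ($n-2$ equations) together with $0=r(2\pi)-r(0)$ (one scalar equation), with $\ogamma_{20}u_0^2$ understood as a quadratic form in $n-2$ dimensions. Your detailed transcription --- cylindrical coordinates, time rescaling by $\dot\varphi$, expansion in $(u_0,r_0)$ with coefficients promoted to linear maps, vectors and quadratic forms, and variation of constants with the matrix propagator $e^{\tA\varphi/\omega}$ in place of $e^{c_1\varphi/\omega}$ --- is precisely the content the paper's analogy is invoking.
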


\begin{proof}
	The proof is analogous to that of Theorem \ref{thm3D}, now by setting up a boundary value problem with $n-2$ equations for $0=u(2\pi)-u(0)$ and one for $0=r(2\pi)-(0)$. This results in a system of $n-1$ equations formed by direct analogues to \eqref{Periodic_R-a} and \eqref{Periodic_R-b}, where $\calO(3)$ contains all terms of at least cubic order in ${u_0}_i, r_0$, 
	and $\ogamma_{20}u_0^2$ is a quadratic form in $n-2$ dimensions.
\end{proof}

Similar to the $3$-dimensional case, the solution structure of the $(n-1)$-dimensional system \eqref{Periodic_R-a}, \eqref{Periodic_R-b} depends on whether the matrix $\tA$ is hyperbolic (i.e., {the full linear part} $A$ satisfies Hypothesis~\ref{h:AG}) or not, as shown in the next result.

\begin{corollary}
	\label{c:nD}
	Consider \eqref{nDAbstractSystem} in cylindrical coordinates $(u,v,\w)=(u,r\cos{\varphi},r\sin{\varphi})$. If $\tA$ is hyperbolic, 
	then the solution vector $u=u(\varphi;\mu)$ is of order $\calO\left(\mu^2\right)$ and the statement of Theorem \ref{1stPart} holds true. If $\tA$ is not hyperbolic with $1$-dimensional generalized kernel, then there are constants $c_2$, $\gamma_{\#}$ such that the statements of Corollary \ref{c:3D} for $c_1=0$ hold true.
\end{corollary}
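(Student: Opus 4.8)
The plan is to reduce the $(n-1)$-dimensional algebraic system of Theorem~\ref{Thm_nD} to a scalar equation in the radius $r_0$, distinguishing the cases where the transverse block $\tA$ is invertible or not, and then to invoke Theorem~\ref{1stPart} and Corollary~\ref{c:3D}, respectively. The crucial object is the linear map $\ogamma_{10}$, which by variation of constants on the linearized $u$-equation in rescaled time equals $\mathrm{e}^{2\pi\tA/\omega}-\mathrm{Id}$. This map is singular exactly when some eigenvalue $\lambda$ of $\tA$ satisfies $\mathrm{e}^{2\pi\lambda/\omega}=1$, i.e.\ $\lambda\in\rmi\omega\Z$; under the spectral assumptions of the corollary (all eigenvalues of $\tA$ off the imaginary axis apart from a possible simple zero), this occurs iff $0$ is an eigenvalue, so that $\ogamma_{10}$ is invertible iff $\tA$ is.

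First I would handle $\tA$ invertible. Then $\ogamma_{10}$ is an invertible linear map on $\R^{n-2}$, so the $u$-block of the system (the analogue of \eqref{Periodic_R-a}) can be solved for $u_0$ by the implicit function theorem, giving $u_0=u_0(r_0;\mu)=\calO(\mu^2+r_0^2)$ and hence $u=u(\varphi;\mu)=\calO(\mu^2)$ along the branch. Substituting into the scalar $r$-equation (the analogue of \eqref{Periodic_R-b}) modifies only the higher-order terms, so the leading-order bifurcation equation is exactly that of Theorem~\ref{1stPart}, yielding the degenerate Hopf bifurcation with amplitude \eqref{periodic_orbit}. The exchange of stability follows from the $2$-dimensional Lipschitz invariant manifold of Proposition~\ref{prop:inv_man}, as in Corollary~\ref{c:3D}.

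Next I would treat $\tA$ non-invertible with $1$-dimensional generalized kernel, i.e.\ a simple zero eigenvalue. Using the $\tA$-invariant spectral splitting $\R^{n-2}=E^0\oplus E^h$ with $\tA|_{E^0}=0$ and $\tA|_{E^h}$ invertible, I would write $u_0=(u_0^0,u_0^h)$. On $E^h$ the restriction of $\ogamma_{10}$ is invertible, so the $E^h$-block solves for $u_0^h=\calO(2)$ in $(u_0^0,r_0)$ via the implicit function theorem. Substituting this back into the scalar $E^0$-block eliminates the coupling to $E^h$ at leading order, since $u_0^h$ is quadratically small, and leaves a single scalar equation in $(u_0^0,r_0)$ whose leading terms are quadratic. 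This is structurally identical to \eqref{Periodic_R-aa} in the case $c_1=0$ of Corollary~\ref{c:3D}, with $c_2$ and $\gamma_{_\#}$ now defined as the projections onto $E^0$ of, respectively, the quadratic self-interaction of the kernel direction in $U$ and the $r^2$-coefficient, in the manner of \eqref{e:c20}. Together with the reduced scalar $r$-equation (the analogue of \eqref{Periodic_R-bb}), the argument of Corollary~\ref{c:3D} transfers verbatim: two curves of periodic solutions bifurcate under the sign condition on $\omega c_2\gamma_{_\#}\mu$, and none otherwise.

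I expect the main obstacle to be the reduction in the second case, specifically verifying that the hyperbolic block $E^h$ feeds into the reduced kernel equation only at higher order, so that the leading-order bifurcation equation retains the planar form analyzed in Corollary~\ref{c:3D}. This rests on two points: that $u_0^h$ is genuinely quadratically small, which uses invertibility of $\ogamma_{10}|_{E^h}$, and that the zero eigenvalue is simple, so that $\ogamma_{10}$ vanishes identically on $E^0$ with no residual Jordan contribution that could disturb the scalar structure. Once these hold, the effective constants $c_2,\gamma_{_\#}$ are well defined by projection onto $E^0$ and the remaining analysis is a direct transcription of the $3$-dimensional computation.
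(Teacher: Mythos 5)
Your proposal is correct and follows essentially the same route as the paper: reduce via Theorem~\ref{Thm_nD}, invert $\ogamma_{10}=\mathrm{e}^{2\pi\tA/\omega}-\mathrm{Id}$ (respectively its hyperbolic block) by the implicit function theorem, and hand the resulting scalar equations to Theorem~\ref{1stPart} respectively Corollary~\ref{c:3D} with $c_1=0$. The only differences are cosmetic: the paper block-diagonalizes $\ogamma_{10}$ directly rather than splitting $\R^{n-2}=E^0\oplus E^h$ spectrally (which is equivalent, since both subspaces are $\tA$-invariant), and your explicit verification that $\ogamma_{10}$ is singular only when $0$ is an eigenvalue of $\tA$ (ruling out eigenvalues in $\rmi\omega\Z$) makes precise a point the paper leaves implicit in Hypothesis~\ref{h:AG}.
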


\begin{proof}
	From Theorem \ref{Thm_nD} we have the corresponding equations \eqref{Periodic_R-a}, \eqref{Periodic_R-b} for the $n$-dimensional system \eqref{nDAbstractSystem}, where $\ogamma_{20}u_0^2$ is a quadratic form in $n-2$ dimensions. If $\tA$ is hyperbolic, then the $(n-2)\times(n-2)$ matrix $\ogamma_{10}=e^{2\pi \tA/\omega}-\mathrm{Id}$ is invertible. Solving the $(n-1)$-dimensional system gives the same as in the proof of Corollary \ref{c:3D} to leading order.
	
	If $\tA$ is not hyperbolic, then by assumption it has a $1$-dimensional generalized kernel. In this case, we {change coordinates in the analogue of} \eqref{Periodic_R-a} such that the matrix $\ogamma_{10}$ is block-diagonal with the kernel in the top left, and an invertible $(n-3)\times(n-3)$ block $\ogamma'_{10}$ on the lower right of the matrix. Thus, we split \eqref{Periodic_R-a} into a scalar equation and a $(n-3)$-dimensional system. By the implicit function theorem we solve the equations corresponding to $\ogamma'_{10}$ and substitute the result into the other two equations: the one with the  $1$-dimensional kernel and the corresponding \eqref{Periodic_R-b} with $\odelta_{01} = \frac{2\pi\mu}{\omega} + \calO(\mu^2)$, $\odelta_{02} = \frac{4\sigma_{_\#}}{3\omega} + \calO(\mu)$. We obtain then two scalar equations of the same type as in Corollary \ref{c:3D} for the case $c_1=0$.
\end{proof}

We omit explicit formulas for $c_2, \gamma_{\#}$, but note that these can be provided in terms of data from $\tA$.
Before concluding this section, we note that these results directly extend to the more general nonsmooth terms \eqref{gen_abs_val} and to additional higher order functions as in \eqref{General2D_AV}.

\begin{corollary}\label{c:nD:gen_abs}
	Consider system \eqref{nDAbstractSystem} with $\tilde f, \tilde g$ as the nonlinear part of \eqref{General2D_AV}, but with $f,g$ as in \eqref{f_general}, \eqref{g_general}, respectively. If $\tA$ is hyperbolic and $\widetilde\sigma_{_\#}\neq 0$, cf.\ \eqref{e:tildesig}, then the statement of Corollary \ref{c:nD} holds true with $\sigma_{_\#}$ replaced by $\widetilde\sigma_{_\#}$.
\end{corollary}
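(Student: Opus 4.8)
The plan is to follow the proof of Corollary~\ref{c:nD} step by step, tracking only how the generalized slopes in \eqref{f_general}, \eqref{g_general} and the additional smooth terms modify the single scalar coefficient that governs the leading-order bifurcation. As in Theorem~\ref{Thm_nD}, I would pass to cylindrical coordinates $(u,v,\w)=(u,r\cos\varphi,r\sin\varphi)$, rescale time by $\dot\varphi$ so that the radial and transverse equations are parametrized by $\varphi$, and set up the boundary value problem $0=u(2\pi)-u(0)$, $0=r(2\pi)-r(0)$. Since $\tA$ is invertible, the matrix $\ogamma_{10}=e^{2\pi\tA/\omega}-\mathrm{Id}$ is invertible, so the implicit function theorem solves the $(n-2)$ transverse equations for $u_0=\calO(r_0^2)$; substituting into the scalar $r$-equation leaves, to leading order, exactly the planar bifurcation equation of Theorem~\ref{1stPart}, whose quadratic coefficient is $\int_0^{2\pi}\chi_2(\varphi)\,\D\varphi$.

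The only input that changes relative to Corollary~\ref{c:nD} is the value of that integral. With the generalized absolute values, the angular coefficient $\chi_2$ takes the form computed in the proof of Theorem~\ref{Thm0_General}, and the quadrant-wise integration performed there yields $\int_0^{2\pi}\chi_2(\varphi)\,\D\varphi=\frac{4}{3}\widetilde\sigma_{_\#}$, with the cross terms proportional to $sc$ vanishing by symmetry. The smooth quadratic terms $f_q,g_q$ contribute to $\chi_2$ only odd trigonometric monomials that integrate to zero over $[0,2\pi]$, exactly as in the proof of Theorem~\ref{t_averaging}, while the smooth cubic terms $f_c,g_c$ enter at order $r^3$ and hence, as in Corollary~\ref{thm_cubic}, perturb only the higher-order remainder. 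Thus $\odelta_{02}=\frac{4}{3\omega}\widetilde\sigma_{_\#}+\calO(\mu)$, and none of the smooth additions alter the leading-order coefficient.

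With this substitution the reduced scalar bifurcation equation is identical to the one treated in Corollary~\ref{c:nD} with $\sigma_{_\#}$ replaced by $\widetilde\sigma_{_\#}$, so the conclusion follows directly, the stability statement again resting on the Lipschitz invariant manifold of Proposition~\ref{prop:inv_man}. I expect the only genuine point to verify is that introducing both the generalized slopes and the smooth higher-order terms simultaneously does not create any new coupling between $u$ and $r$ at quadratic order: because $\tA$ is invertible, the $u$-component remains slaved at $\calO(r_0^2)$, so the mixed contribution $\ogamma_{11}u_0r_0$ and the quadratic form $\ogamma_{20}u_0^2$ feed back into the $r$-equation only at order $r_0^3$, leaving $\widetilde\sigma_{_\#}$ as the sole determinant of criticality. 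This is the analogue for arbitrary $n$ of the reduction already carried out for $n=3$ in Corollary~\ref{Generalization_3D}.
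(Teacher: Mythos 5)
Your proposal is correct and follows exactly the route the paper intends: the paper states this corollary without a separate proof, treating it as an immediate combination of the reduction in Corollary~\ref{c:nD} (invertible $\tA$, hence $u_0=\calO(r_0^2)$ via the implicit function theorem applied to $\ogamma_{10}=e^{2\pi\tA/\omega}-\mathrm{Id}$), the quadrant-wise integration $\int_0^{2\pi}\chi_2(\varphi)\,\D\varphi=\tfrac43\widetilde\sigma_{_\#}$ from Theorem~\ref{Thm0_General}, and the irrelevance of the smooth quadratic and cubic terms at leading order from Corollary~\ref{thm_cubic}, mirroring the proof given for Corollary~\ref{Generalization_3D}. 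Your additional check that the mixed terms $\ogamma_{11}u_0r_0$ and $\ogamma_{20}u_0^2$ only enter at order $r_0^3$ is a useful explicit verification of what the paper leaves implicit, but it is the same argument.
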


Recall from \S\ref{s:abstract} that we have presented results for systems where the linear part is in block-diagonal form and normal form for the oscillatory part, while the nonlinear part is smooth in the radial direction. For completeness, we next discuss the case of general linear part, i.e., not necessarily in normal form.

\subsection{General linear part}\label{Gen_linear_part}

Here we show that our analysis also applies to systems with general linear part. First, we consider the planar case \eqref{e:abstractplanar} with
\begin{align*}
	f_1(u_1,u_2)&=a_{11}u_1|u_1|+a_{12}u_1|u_2|+a_{21}u_2|u_1|+a_{22}u_2|u_2| +\calO(3),\\
	f_2(u_1,u_2)&=b_{11}u_1|u_1|+b_{12}u_1|u_2|+b_{21}u_2|u_1|+b_{22}u_2|u_2| +\calO(3).
\end{align*} 
Under Hypothesis~\ref{h:AG}, changing the linear part of \eqref{e:abstractplanar} to normal form by the associated matrix $\bT$, 
i.e., $\bT\cdot(v_1,v_2)^\mathsf{T}=(u_1,u_2)^\mathsf{T}$, the system becomes
\begin{equation}\label{e:abstractplanar:2}
	\begin{pmatrix}
		\dot v_1\\
		\dot v_2
	\end{pmatrix} = 
	\begin{pmatrix}
		\mu & -\omega\\
		\omega & \mu
	\end{pmatrix}\begin{pmatrix}
		v_1\\
		v_2
	\end{pmatrix}+\bT^{-1}\begin{pmatrix}
		g_1\left( v_1, v_2 \right)\\
		g_2\left( v_1, v_2 \right)
	\end{pmatrix},
\end{equation} where $g_i(v_1,v_2)=f_i\left(\bT\cdot (v_1,v_1)^\mathsf{T}\right)$ for $i\in\{1,2\}$ and with $\bT=(z_{ij})_{1\leq i,j\leq 2}$, {as well as the shorthand $[[\cdot]]:=\cdot|\cdot|$}, 
we have 
\begin{align*}
	g_1(v_1,v_2)=&a_{11} [[z_{11}v_1+z_{12}v_2]] +a_{12}(z_{11}v_1+z_{12}v_2)|z_{21}v_1+z_{22}v_2|\\
	&+a_{21}(z_{21}v_1+z_{22}v_2)|z_{11}v_1+z_{12}v_2|+a_{22} [[z_{21}v_1+z_{22}v_2]] +\calO(3),\\[0.5em]
	g_2(v_1,v_2)=&b_{11} [[z_{11}v_1+z_{12}v_2]] +b_{12}(z_{11}v_1+z_{12}v_2)|z_{21}v_1+z_{22}v_2|\\
	&+b_{21}(z_{21}v_1+z_{22}v_2)|z_{11}v_1+z_{12}v_2|+b_{22} [[z_{21}v_1+z_{22}v_2]]  +\calO(3).
\end{align*}
{We use} polar coordinates for $(v_1,v_2)=(r\cos(\varphi),r\sin(\varphi))$ {as before,} and
\[
(z_{11},z_{12})=(C\cos(\phi),C\sin(\phi)), \hspace*{1cm} (z_{21},z_{22})=(D\cos(\vartheta),D\sin(\vartheta)),
\] where $C,D\in\R$, $\phi,\vartheta\in[0,2\pi)$ are fixed constants. 
System \eqref{e:abstractplanar:2} can be written as
\begin{equation}
	\begin{cases}
		\dot{r} = \mu r+\chi_2(\varphi)r^2 + \calO(r^3),\\
		\dot{\varphi} = \omega + \Omega_1(\varphi)r + \calO(r^2),
	\end{cases}
	\label{e:abstract:polar}
\end{equation} 
where, using trigonometric identities, we have
{\begin{align*}
		\chi_2(\varphi) =& \frac{1}{\det(\bT)}\Big( [[\cos(\varphi-\phi)]]C\abs{C}(a_{11}\c+b_{11}\s) + \cos(\varphi-\phi)|\cos(\varphi-\vartheta)|C\abs{D}(a_{12}\c+b_{12}\s) \\
		&+ \cos(\varphi-\vartheta)|\cos(\varphi-\phi)|\abs{C}D(a_{21}\c+b_{21}\s) + [[\cos(\varphi-\vartheta)]]D\abs{D}(a_{22}\c+b_{22}\s) \Big).
\end{align*}}
{with $\c:=D\sin(\vartheta-\varphi)$, $\s:=C\sin(\varphi-\phi)$.} 
By assumption, $\omega\neq 0$ so that 
rescaling time in \eqref{e:abstract:polar} analogous to \eqref{e:absper} gives \eqref{new_time} with $M(\varphi)=\mu$ and $W(\varphi)=\omega$.
Following the approach described in \S\ref{s:abstract}, for the analogue of \eqref{r_bar} we obtain
\begin{align}
	\Lambda &= \frac{1}{2\pi} \int_0^{2\pi}\frac{\mu}{\omega}\D \varphi= \frac{\mu}{\omega}, \\
	\Sigma &= \frac{1}{2\pi} \int_0^{2\pi}\frac{\chi_2(\varphi)}{\omega} \D\varphi, 
	\label{check_sigma_nnf}
\end{align}
where we set $\mu=0$ in \eqref{check_sigma_nnf} (unlike in \eqref{check_sigma})
and the expression for $\Sigma$ can be determined explicitly. For instance, the first term of $\chi_2(\varphi)$ can be integrated as
\begin{equation*}
	\frac{C{\abs{C}D}}{{\det(\bT)}}a_{11}\int_0^{2\pi} [[\cos(\varphi-\phi)]] {\sin(\vartheta-\varphi)} \D\varphi = \frac{8C{\abs{C}D}}{3{\det(\bT)}}a_{11}{\sin(\vartheta-\phi)=\frac{8}{3}\abs{C}a_{11}},
\end{equation*}
{with last equality due to $\det(\bT)=CD\sin(\vartheta-\phi)$.} Computing the integral of $\chi_2(\varphi)$, {equation \eqref{check_sigma_nnf} turns into} 
{\begin{equation}\begin{aligned}\label{generalized_sigma}
			\Sigma 
			=\frac{2}{3\pi\omega}&\Big[ 2\abs{C}a_{11} + \abs{D}a_{12} +\abs{C}b_{21} + 2\abs{D}b_{22} \\
			&+\cos(\vartheta-\phi)\big(\sgn(C)Da_{21}+\sgn(D)Cb_{12}\big)\Big].
\end{aligned}\end{equation}}
In case $\phi=0$ and $\vartheta=\frac{\pi}{2}$,
{we have $\cos(\vartheta-\phi)=0$ so that the last few terms in  \eqref{generalized_sigma} vanish}
and for $C=D=1$ the same expression as in \eqref{averaging_integrals} is obtained, i.e., $\Sigma=\frac{2}{3\pi\omega}\sigma_{_\#}$. Notice that this set of parameters gives $z_{11}=z_{22}=1$, $z_{12}=z_{21}=0$, i.e., $\bT$ is the identity.

Moreover, we can
{derive} the analogue of \eqref{generalized_sigma} for the generalized nonsmooth function \eqref{gen_abs_val} and compute the integrals involved in the generalized $\chi_2(\varphi)$ as in the proof of Theorem \ref{Thm0_General}. For instance, some of them read, omitting the factor $\det(\bT)^{-1}$,
\begin{align*}
	C{\abs{C}}\int_0^{2\pi} \cos(\varphi-\phi)&\left( \ABS{\cos(\varphi-\phi)}{\alpha_1} {D\sin(\vartheta-\varphi)}a_{11}+\ABS{\cos(\varphi-\phi)}{\beta_1}{C\sin(\varphi-\phi)}b_{11}\right) \D\varphi \\
	=\frac{4}{3}C{\abs{C}} &{D\sin(\vartheta-\phi)}a_{11}\left(\alpha_{1_+}-\alpha_{1_-}\right),
\end{align*}
\begin{align*}
	C{\abs{D}}\int_0^{2\pi} \cos(\varphi-\phi)&\left(\ABS{\cos(\varphi-\vartheta)}{\alpha_2}{D\sin(\vartheta-\varphi)}a_{12}+\ABS{\cos(\varphi-\vartheta)}{\beta_2}{C\sin(\varphi-\phi)}b_{12}\right) \D\varphi \\
	= \frac{1}{3}C{\abs{D}}\bigg[ &2{D\sin(\vartheta-\phi)}a_{12}\left(\alpha_{2_+}-\alpha_{2_-}\right)
	+{C\sin\big(2(\vartheta-\phi)\big)}b_{12}\left(\beta_{2_+}-\beta_{2_-}\right) \bigg].
\end{align*} 
{The full expression can be simplified to }
{\begin{equation*}
		\begin{aligned}
			\widetilde\Sigma :=
			\frac{2}{3\pi\omega} \bigg[&\abs{C}a_{11}\left(\alpha_{1_+}-\alpha_{1_-}\right)+\frac{\abs{D}}{2}a_{12}\left(\alpha_{2_+}-\alpha_{2_-}\right)\\
			&+\frac{\abs{C}}{2}b_{21}\left(\beta_{3_+}-\beta_{3_-}\right)
			+\abs{D}b_{22}\left(\beta_{4_+}-\beta_{4_-}\right) \\
			&+ \frac{\cos(\vartheta-\phi)}{2}\Big(\sgn(C)Da_{21}\left(\alpha_{3_+}-\alpha_{3_-}\right)+\sgn(D)Cb_{12}\left(\beta_{2_+}-\beta_{2_-}\right)\Big) \bigg].
		\end{aligned}
	\end{equation*}
	As above, for $\phi=0$ and $\vartheta=\frac{\pi}{2}$ the last few terms vanish, and for $C=D=1$ we have $\widetilde\Sigma = \frac{2}{3\pi\omega}\widetilde\sigma_{_\#}$ with $\widetilde\sigma_{_\#}$ from \eqref{e:tildesig}.}

\medskip
Furthermore, we can extend these results for the case $n>2$ in the form of a coupled system similar to \eqref{e:cylindrical0} using the approach presented in the proof of Theorem~\ref{t_per_orb}. This gives an integral expression for the generalized first Lyapunov coefficient which provides an explicit algebraic formula for an adjusted $\widetilde\sigma_{_\#}$. We comprise this in the following result.

\begin{theorem}\label{Thm_Gen_Lin_Part}
	Consider system \eqref{e:abstract} with general linear part $A(\mu)$, and 
	satisfying the hypotheses of Theorem \ref{t:abstractnormal}.
	The statement of Corollary \ref{c:nD:gen_abs} 
	holds true with $\widetilde\sigma_{_\#}$ replaced by $\frac{3\pi\omega}{2}\widetilde{\Sigma}$.
\end{theorem}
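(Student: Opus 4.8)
The plan is to reduce the general linear part to the block-diagonal normal form already handled in Corollary~\ref{c:nD:gen_abs}, and then to check that the `direct method' survives the loss of radial smoothness that this reduction entails. Under Hypothesis~\ref{h:AG} the simple pair $\mu\pm\rmi\omega$ together with hyperbolicity of the remaining spectrum permits a linear change of coordinates $\bT$, exactly as in the proof of Theorem~\ref{t_per_orb}, that puts $A(\mu)$ into block-diagonal form with the $2\times2$ rotation block on $E^\rmc$ in normal form and an invertible $(n-2)\times(n-2)$ block $\tA$ on the complement. By Theorem~\ref{t:abstractnormal} the quadratic part of $G$ in the coordinates of Lemma~\ref{l:cyl} is built from generalized modulus terms $u_\ell[u_i\Pp$ with $i=1,2$; applying $\bT$ turns each such term into a product of a linear form with the generalized absolute value of a linear form, still homogeneous of degree two and continuous. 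Passing to cylindrical coordinates $(u,r,\varphi)$ relative to $E^\rmc$ then yields a coupled system of the shape \eqref{nDAbstractSystem}, cf.\ \eqref{polar_system_3D}, with $\dot\varphi=\omega+\calO(r+|u|)$.

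The main obstacle is that $\bT$ sends the switching hyperplanes $\{u_1=0\},\{u_2=0\}$ to radial hyperplanes that in general tilt into the transverse directions, so that the switching locus depends on $u/r$ and the cylindrical vector field loses joint $C^2$-smoothness in $(r,u)$, as anticipated in Remark~\ref{e:arrangement}. I would resolve this by noting that the leading-order reduction in Corollaries~\ref{c:3D} and \ref{c:nD} never uses the offending derivatives. Writing a tilted non-smooth quadratic term as $p\,|\ell|$ with $p,\ell$ linear in $(r,u)$, its restriction to $u=0$ equals $r^2$ times a function that is continuous in $\varphi$ and manifestly smooth in $r$; hence the radial derivatives $\partial_r\Psi_r$ and $\partial_r^2\Psi_r$ at $u=0$, which produce $\odelta_{01}$ and $\odelta_{02}$, are well defined, and likewise for $\ogamma_{10}$ and $\ogamma_{02}$ in the transverse equation. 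Only the mixed and purely transverse second derivatives $\partial_{ur}$, $\partial_u^2$—precisely those that fail to exist across the tilted switching set—are affected, and these enter solely the coefficients of $u_0r_0$ and $u_0^2$.

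With this observation the direct method proceeds verbatim as in Theorem~\ref{thm3D} and Corollary~\ref{c:nD}: after rescaling time by $\dot\varphi=\omega+\calO(r+|u|)\neq0$ I set up the periodic boundary value problem $u(2\pi)=u(0)$, $r(2\pi)=r(0)$ and obtain the algebraic system \eqref{Periodic_R-a}, \eqref{Periodic_R-b}. Invertibility of $\tA$ makes $\ogamma_{10}=\rme^{2\pi\tA/\omega}-\mathrm{Id}$ invertible, so \eqref{Periodic_R-a} is solved by the implicit function theorem as $u_0=-\ogamma_{10}^{-1}\ogamma_{02}\,r_0^2+\calO(r_0^3)=\calO(r_0^2)$. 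Substituting into \eqref{Periodic_R-b} and dividing out $r_0\neq0$ shows that every term carrying an ill-defined coefficient is of order $u_0r_0=\calO(r_0^3)$ or higher, so it is absorbed into the continuous higher-order remainder exactly as in Corollary~\ref{hot2D}; the leading bifurcation equation therefore depends only on $\odelta_{01}=2\pi\mu/\omega+\calO(\mu^2)$ and on $\odelta_{02}$, whose value at $\mu=0$ equals $\frac{1}{\omega}\int_0^{2\pi}\chi_2(\varphi)\,\D\varphi$.

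It then remains to identify this integral. Evaluating $\int_0^{2\pi}\chi_2\,\D\varphi$ for the generalized modulus terms is the computation already carried out in \eqref{check_sigma_nnf}--\eqref{generalized_sigma}, which gives $\int_0^{2\pi}\chi_2\,\D\varphi=2\pi\omega\widetilde\Sigma$. Consequently the branch obeys $r_0=-\mu/(\omega\widetilde\Sigma)+\calO(\mu^2)$, which is exactly \eqref{periodic_orbit} under the substitution $\widetilde\sigma_{_\#}\mapsto\frac{3\pi\omega}{2}\widetilde\Sigma$, since $\frac{4}{3}\cdot\frac{3\pi\omega}{2}\widetilde\Sigma=2\pi\omega\widetilde\Sigma$. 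Thus $\frac{3\pi\omega}{2}\widetilde\Sigma$ plays the role of the generalized first Lyapunov coefficient, and—using invertibility of $\tA$ together with Proposition~\ref{prop:inv_man} for the exchange of stability—the existence, uniqueness, scaling and criticality statements of Corollary~\ref{c:nD:gen_abs} carry over with $\widetilde\sigma_{_\#}$ replaced by $\frac{3\pi\omega}{2}\widetilde\Sigma$, as claimed.
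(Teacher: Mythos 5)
Your overall strategy (block-diagonalize via $\bT$, pass to cylindrical coordinates, reduce to the radial equation, and identify $\int_0^{2\pi}\chi_2\,\D\varphi = 2\pi\omega\widetilde\Sigma$) matches the paper, and your final arithmetic relating $\widetilde\Sigma$ to the substitution $\widetilde\sigma_{_\#}\mapsto\frac{3\pi\omega}{2}\widetilde\Sigma$ is correct. The gap is the middle step, where you claim the direct method proceeds ``verbatim''. The algebraic system \eqref{Periodic_R-a}--\eqref{Periodic_R-b} is obtained by Taylor expanding the solution map in $(u_0,r_0)$ with a uniform $\calO(3)$ remainder, and this requires the cylindrical vector field to be (at least) $C^2$ in $(u,r)$ for each fixed $\varphi$. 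After the transformation $\bT$, the quadratic terms contain factors of the type $r\,|a(\varphi)r+bu|$ with $b\neq 0$: such a function is homogeneous of degree two but not a polynomial, so it admits \emph{no} second-order Taylor expansion at $(u,r)=(0,0)$ whatsoever (if $f(u,r)=Q(u,r)+o(u^2+r^2)$ with $Q$ a quadratic form, then scaling and homogeneity force $f\equiv Q$, a contradiction); moreover it fails to be $C^1$ on the tilted line $\{a(\varphi)r+bu=0\}\cap\{r>0\}$, so even the implicit function theorem step needs justification. Your remedy---that the ill-defined derivatives only enter the coefficients $\ogamma_{20},\ogamma_{11}$ of $u_0^2$ and $u_0r_0$, which are later discarded---is circular: discarding those terms presupposes that the solution map splits into well-defined coefficients plus a controlled $\calO(3)$ remainder, which is precisely what the missing smoothness would have to provide. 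Smoothness of the restriction to $\{u=0\}$ gives candidate values for $\odelta_{01},\odelta_{02},\ogamma_{10},\ogamma_{02}$, but no remainder estimate off that slice, where the bifurcating solutions actually live.

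The paper closes this gap with two ingredients your proposal lacks. First, it does not derive $u_0=\calO(r_0^2)$ from the (unavailable) algebraic system; instead it follows Theorem~\ref{t_per_orb}: rescale $u=r\tu$ and use the contraction argument---which needs only Lipschitz continuity---to get the a priori bound $\tu=\calO(r_0)$ along all candidate periodic orbits. Second, it proves the quantitative expansion
\[
(L_i+K_i)\ABS{L_j+K_j}{p} \;=\; L_i\ABS{L_j}{p}+\calO\big(L_iK_j+L_jK_i+K_iK_j\big),
\]
justified by a sign-comparison argument: the naive replacement is only wrong where $L_j+K_j$ and $L_j$ have opposite signs, which forces $|L_j|=\calO(|K_j|)$, so the discrepancy $(\pp-\pn)L_iL_j$ is itself of the stated order. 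Combining the two, each tilted modulus term equals $r^2\cos(\varphi-\zeta_i)\ABS{\cos(\varphi-\zeta_j)}{p}+r^2\calO(\tu)$ with $r^2\calO(\tu)=\calO(r^2r_0)$, so the radial equation is the planar one---non-smooth in $\varphi$ only---plus genuinely higher-order terms, and Proposition~\ref{Thm_Gen}/Corollary~\ref{hot2D} together with the integral computation \eqref{check_sigma_nnf}--\eqref{generalized_sigma} finish the proof. If you establish the displayed expansion lemma and import the bound $\tu=\calO(r_0)$ from Theorem~\ref{t_per_orb}, your argument can be repaired along these lines; without them, the reduction to \eqref{Periodic_R-a}--\eqref{Periodic_R-b} does not go through.
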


In particular, this theorem covers system \eqref{e:abstractplanar} with general  matrix $A=(m_{ij})_{1\leq i,j\leq 2}$. We also remark that the system considered here is neither of the form of \eqref{3DAbstractSystem} nor \eqref{nDAbstractSystem} in terms of smoothness of the $u$ variable.
\begin{proof}
	We proceed as before to get the analogue of system \eqref{e:abstractplanar:2}, i.e., transforming the linear part into a block-diagonal matrix and normal form in the center eigenspace $E^\rmc$. From Theorem \ref{t:abstractnormal} the nonlinear terms are second order modulus terms, which in this case are of the form $(L_i(v,w)+K_i(u))\ABS{L_j(v,w)+K_j(u)}{p}$, $i,j\in\{1,2\}$, where the functions $L_i(v,w)$ are linear combinations of $v,w$; $K_i(u)$ are linear combinations of the components of the vector $u$, i.e., $u_l$, $\forall l\in\{1,\cdots,n-2\}$; and $p_+,p_-\in\R$ are as in \eqref{gen_abs_val}. Note that $L_1, K_1$ are not necessarily equal to $L_2, K_2$, respectively.
	The previous product can be expanded as 
	\begin{equation}\label{gen_expansion}
		(L_i(v,w)+K_i(u))\ABS{L_j(v,w)+K_j(u)}{p} = L_i\ABS{L_j}{p} + \calO(L_iK_j+L_jK_i+K_iK_j),
	\end{equation}
	since the error term $p_+ L_i L_j - L_i\ABS{L_j}{p}$ (resp. $p_- L_i L_j - L_i\ABS{L_j}{p}$ ) is of order $|u|^2$, i.e., contained in the higher order terms of \eqref{gen_expansion}. More precisely, consider the case $L_j+K_j \geq 0$. Then, the error term is $p_+L_i L_j - L_i\ABS{L_j}{p}$, which is zero for $L_j \geq 0$, and otherwise $(p_+ - p_-)L_iL_j$. However, in order to have both $L_j+K_j \geq 0$ and $L_j<0$, the signs of $L_j$ and $K_j$ have to differ, which happens only if these magnitudes are comparable. Hence, $\calO(L_j)=\calO(K_j)$. For the case $L_j+K_j < 0$ we proceed analogously.
	
	In particular, $\calO(L_iK_j+L_jK_i+K_iK_j) = \calO(K(\check{K}+L))$, where $K, \check{K}$ are linear combinations of the components of $u$, and $L$ of $v, w$.
	
	Following the proof of Theorem \ref{t_per_orb}, we write $u=r\tu$ and, together with the change of polar coordinates from above, $L_i=r\cos(\varphi-\zeta_i)$ (where $\zeta_i$ is either $\phi$ or $\vartheta$), so that  
	$$(L_i(v,w)+rK_i(\tu))\ABS{L_j(v,w)+rK_j(\tu)}{p} = r^2\cos(\varphi-\zeta_i)\ABS{\cos(\varphi-\zeta_j)}{p} + r^2\calO(\tu).$$
	From Theorem \ref{t_per_orb} we have $\tu=\calO(r_0)$ and thus $r^2\calO(\tu)$ is of higher order. We can then integrate explicitly the leading order as done for \eqref{check_sigma_nnf}.
\end{proof}

We implement now these results to an applied $3$-dimensional model in the field of land vehicles.

\section{A 3D example: shimmying wheel}\label{s:shim}
For illustration of the theory and its practice, we consider as an example the model of a shimmying wheel with contact force analyzed in \cite{SBeregi}, where a towed caster with an elastic tyre is studied.
The equations of motion of the towed wheel can be written as follows:
\begin{equation}
	\begin{pmatrix}
		\dot{\Omega}\\
		\dot{\psi}\\
		\dot{q}
	\end{pmatrix}=
	\mathbf{J}\begin{pmatrix}
		{\Omega}\\
		{\psi}\\
		{q}
	\end{pmatrix} + {\tilde{c}_4}
	\begin{pmatrix}
		q\abs{q}\\
		0\\
		0
	\end{pmatrix},\quad
	\textbf{J}:=
	\begin{pmatrix}
		\tilde{c}_1 & \tilde{c}_2 & \tilde{c}_3\\
		1 & 0 & 0\\
		\tilde{c}_5 & \tilde{c}_6 & \tilde{c}_7
	\end{pmatrix},
	\label{System1Beregi}
\end{equation}
where $\psi$ is the yaw angle, $q$ is the deformation angle of the tyre due to the contact with the ground and $\Omega=\dot{\psi}$, and the parameters $\tilde{c}_i\in\R$ are constants determined by the system. We can readily see that there is only one switching surface in this case, namely $\{q=0\}$. Here $\mathbf{J}$ is the Jacobian matrix at the equilibrium point $(\Omega,\psi,q)=(0,0,0)$.

The system is of the form \eqref{e:abstract} and suitable parameter choices yield a pair of complex conjugate eigenvalues crossing the imaginary axis, as well as one nonzero real eigenvalue. The resulting bifurcations were studied in \cite{SBeregi} and termed `dynamic loss of stability'. Here we expound how our approach applies to this system. 

Clearly, Theorem~\ref{t_per_orb} applies for any Hopf bifurcation eigenvalue configuration, which proves that a unique branch of periodic solutions bifurcates. In order to identify the direction of bifurcation, we first aim to apply the results of \S\ref{s:3D} and therefore attempt to bring the nonlinear part into a second order modulus form, while also bringing the linear part into Jordan normal form. 

We thus suppose the parameters are such that the Jacobian matrix has a pair of complex conjugate eigenvalues $\lambda_{\pm}=\mu\pm i\omega$, where $\mu,\omega,\lambda_3\in\R$, $\omega,\lambda_3\neq 0$, with the corresponding eigenvectors $\textbf{s}_1=\textbf{u}+i\textbf{v}$, $\textbf{s}_2=\textbf{u}-i\textbf{v}$ and $\textbf{s}_3$, where $\textbf{u},\textbf{v},\textbf{s}_3\in\R^3$. Such parameter choices are possible as it can be seen from inspecting the characteristic equation with the Routh-Hurwitz criterion; we omit details and refer to  \cite{SBeregi}. 
The transformation $\textbf{T}=(\textbf{u} | \textbf{v} |\textbf{s}_3)$ 
with the new state variables 
$(\xi_1,\xi_2,\xi_3)^\mathsf{T}=\textbf{T}^{-1}(\Omega,\psi,q)^\mathsf{T}$ turns \eqref{System1Beregi} into 
\begin{equation}
	\begin{pmatrix}
		\dot{\xi_1}\\
		\dot{\xi_2}\\
		\dot{\xi_3}
	\end{pmatrix}=
	\mathbf{A}
	\begin{pmatrix}
		\xi_1\\
		\xi_2\\
		\xi_3
	\end{pmatrix}+\textbf{h}_2(\xi_1,\xi_2,\xi_3), \quad 
	\mathbf{A} = \begin{pmatrix}
		\mu & \omega & 0\\
		-\omega & \mu & 0\\
		0 & 0 & \lambda_3
	\end{pmatrix},
	\label{system_xi}
\end{equation}
where $\textbf{h}_2$ contains the quadratic terms and reads, using the shorthand $[[\cdot]]:=\cdot|\cdot|$,  
\begin{equation}\textbf{h}_2(\xi_1,\xi_2,\xi_3)=
	\left(
	\tilde{T_{1}},
	\tilde{T_{2}},
	\tilde{T_{3}}
	\right)^\mathsf{T}
	[[u_3\xi_1+v_3\xi_2+s_3\xi_3]],
	\label{h_2_xi}
\end{equation}
{where $u_j, v_j, s_j$, $j\in\{1,2,3\}$ are the} components of the vectors $\textbf{u}, \textbf{v}, \textbf{s}_3$, respectively, and $$\tilde{T}_1:=\tilde{c}_4\frac{v_2s_3-v_3s_2}{\det(\textbf{T})}, \hspace*{0.5cm} \tilde{T}_2:=\tilde{c}_4\frac{s_2u_3-s_3u_2}{\det(\textbf{T})}, \hspace*{0.5cm} \tilde{T}_3:=\tilde{c}_4\frac{u_2v_3-u_3v_2}{\det(\textbf{T})}.$$

If $u_3=v_3=0$, then the nonlinear term $\mathbf{h}_{2}$ in \eqref{h_2_xi} is of second order modulus form:
\begin{equation}\textbf{h}_2(\xi_{1},\xi_{2},\xi_{3})=
	s_3\abs{s_3}\left( \tilde{T}_1, \tilde{T}_2, 0\right)^\mathsf{T}
	\xi_{3}\abs{\xi_{3}},
	\label{h_3_d0}
\end{equation}
where $\det(\textbf{T})\neq 0$ implies $s_3\neq 0$.
Here we need no further theory as we can directly solve {\eqref{system_xi}}: the equation for $\xi_{3}$ reads $\dot \xi_{3} = \lambda_3 \xi_{3}$ so that periodic solutions require $\xi_{3}(t)\equiv0$, i.e., $\xi_{3}(0)=0$.
The remaining system for $\xi_{1},\xi_{2}$ is then the purely linear part
\begin{equation*}
	\begin{pmatrix}
		\dot{\xi_{1}}\\
		\dot{\xi_{2}}
	\end{pmatrix}=\begin{pmatrix}
		\mu & \omega\\
		-\omega & \mu
	\end{pmatrix}\begin{pmatrix}
		\xi_{1}\\
		\xi_{2}
	\end{pmatrix},
\end{equation*}
and consists of periodic solutions (except the origin) for $\mu=0$. The unique branch of bifurcating periodic solutions is thus vertical, i.e., has $\mu=0$ constant.

Next, we consider the case when one of $u_{3}, v_{3}$ is nonzero. In order to simplify the nonlinear term, we apply a rotation $\mathbf{R}_{\theta}$ about the $\xi_{3}$-axis with angle $\theta$, 
which keeps the Jordan normal form matrix invariant, and in the new variables {$(v,w,u)^\mathsf{T}=\mathbf{R}_{\theta}^{-1}(\xi_1,\xi_2,\xi_3)^\mathsf{T}$, in particular $\xi_{3}=u$,} the nonlinear term from 
\eqref{h_2_xi} reads
\begin{align}
	\abs{u_3(v\cos{\theta}-\w\sin{\theta})+v_3(v\sin{\theta}+\w\cos{\theta})+s_3u} 
	= \abs{\tilde{d} v +\w(v_3\cos{\theta}-u_3\sin{\theta})+s_3u},
	\label{abs_eqs}
\end{align}
where $\tilde{d}=u_3\cos{{\theta}}+v_3\sin{{\theta}}$.
We select $\theta$ to simplify \eqref{abs_eqs}: if $u_3\neq 0$ we choose $\theta=\tilde{\theta}=\arctan\left(\frac{v_3}{u_3}\right)$ such that the coefficient of $\w$ in \eqref{abs_eqs} vanishes, i.e., $v_3\cos{\tilde{\theta}}-u_3\sin{\tilde{\theta}} = 0$. Note that $\tilde{d}\neq 0$ since otherwise $v_{3}\tan\tilde\theta=-u_{3}$, but $\tan \tilde\theta=\frac{v_3}{u_3}$, so both expressions together yield $v_{3}^2=-u_{3}^{2}$ and thus $u_{3}=v_{3}=0$ (which has been discussed above). 
If $u_3=0$ and $v_3\neq 0$ we choose $\theta=\tilde{\theta}=\arctan\left(-\frac{u_3}{v_3}\right)$ such that the coefficient of $v$ vanishes, i.e., $u_3\cos{\tilde{\theta}}+v_3\sin{\tilde{\theta}} = 0$, and the following computation is analogous.

Hence, in case $u_3\neq 0$, system \eqref{system_xi} becomes  
\begin{equation}
	\begin{pmatrix}
		\dot{v}\\
		\dot{\w}\\
		\dot{u}
	\end{pmatrix}=
	\mathbf{A}
	\begin{pmatrix}
		v\\
		\w\\
		u
	\end{pmatrix}+\textbf{h}_3(v,\w,u), \quad
	\label{after_rotation}
	\textbf{h}_3(v,\w,u)=
	\begin{pmatrix}
		\tilde{T}_1\cos{\tilde{\theta}}+\tilde{T}_2\sin{\tilde{\theta}}\\
		-\tilde{T}_1\sin{\tilde{\theta}}+\tilde{T}_2\cos{\tilde{\theta}}\\
		\tilde{T}_3
	\end{pmatrix}
	[[\tilde{d}v+s_3u]].
\end{equation} 
Notably, since $\tilde{d}\neq 0$, the nonlinear term is of second order modulus form for $s_3=0$, and we consider this degenerate situation first; as mentioned, the case $u_{3}=0, v_{3}\neq 0$ is analogous.

If $s_3=0$ (which means that the third component of the third eigenvector of the matrix 
$\mathbf{T}$ is zero) the nonlinear term in \eqref{after_rotation} is of second order modulus form. We can write system (\ref{after_rotation}) in the notation of system (\ref{3DAbstractSystem}):
\begin{equation}
	\begin{pmatrix}
		\dot{u}\\
		\dot{v}\\
		\dot{\w}
	\end{pmatrix} = \begin{pmatrix}
		c_1u + h_{11}v\abs{v}\\
		\mu v - \omega \w + a_{11}v\abs{v}\\
		\omega v + \mu \w + b_{11}v\abs{v}
	\end{pmatrix},
\end{equation}
where we changed $\omega$ to $-\omega$ and set $c_1:=\lambda_3$, $h_{11}:=\tilde{T}_3\tilde{d}|\tilde{d}|$, $a_{11}:=\left( \tilde{T}_1\cos{\tilde{\theta}}+\tilde{T}_2\sin{\tilde{\theta}} \right)\tilde{d}|\tilde{d}|$ and $b_{11}:=\left( -\tilde{T}_1\sin{\tilde{\theta}}+\tilde{T}_2\cos{\tilde{\theta}} \right)\tilde{d}|\tilde{d}|$. Since $s_{3}=0$, we have $a_{11}=0$ by choice of $\tilde{\theta}$, which implies $\sigma_{_\#}=0$. Furthermore, $\sigma_2=0$ holds so that Theorem \ref{2ndPart} does not apply. 
However, at $\mu=0$ we have 
$ \ddot{v} = -\omega^2v-\omega b_{11}v\abs{v} = -\frac{\D}{\D v}P$ with potential energy
$$ P(v) = \frac{\omega^2}{2}v^2+\frac{\omega b_{11}}{3}v^2\abs{v}, $$
which is globally convex if $\omega b_{11}\geq 0$ and otherwise convex in an interval around zero and concave outside of it. In both cases there is a vertical branch of periodic solutions, which is either unbounded or bounded by heteroclinic orbits.

\medskip
Let us now come back to \eqref{after_rotation} for $s_{3}\neq 0$, where the nonlinearity is of the form $\mathbf{h}_3=(h_{31}, h_{32}, h_{33})^\mathsf{T}[[\tilde{d}v+s_3u]]$. We first note that in the cylindrical coordinates from \eqref{e:cylindrical0} with the rescaled $u=r\tu$ {for $r\neq 0$} we have
\begin{align*}
	\dot r &= \mu r + r^{2}[[\tilde d \cos(\varphi) + s_{3}\tu]](h_{31}\cos(\varphi) + h_{32}\sin(\varphi)),\\
	\dot \varphi &= \omega + r[[\tilde d \cos(\varphi) + s_{3}\tu]](h_{32}\cos(\varphi) - h_{31}\sin(\varphi)),\\
	\dot\tu &= \lambda_{3}\tu + \tilde{T}_{3} r[[\tilde d \cos(\varphi) + s_3\tu]].
\end{align*}
Following the notation of the proof of Theorem~\ref{t_per_orb} we have the estimate $|\tu_{\infty}| = \calO(r_{\infty})$ and together with the expansion of the $[[\cdot]]$ terms from proof of Theorem \ref{Thm_Gen_Lin_Part}, we can write  
\[
\dot r = \mu r + r^{2}[[{\tilde{d}}\cos(\varphi)]](h_{31}\cos(\varphi) + h_{32}\sin(\varphi)) + \calO(r^{2} r_{\infty}).
\]
In the notation of Proposition~\ref{Thm_Gen}, in this case $\chi_{2}(\varphi)= [[{\tilde{d}}\cos(\varphi)]](h_{31}\cos(\varphi) + h_{32}\sin(\varphi))$, and according to Corollary \ref{hot2D} the bifurcating branch ist given by \eqref{General_Result} with 
\[
\int_{0}^{2\pi} \chi_{2}(\varphi) \D \varphi =  {\tilde{d}|\tilde{d}|} h_{31} \int_{0}^{2\pi} \cos^{2}(\varphi)|\cos(\varphi)| \D \varphi
= \frac 8 3 {\tilde{d}|\tilde{d}|} h_{31} = \frac 8 3{|\tilde{d}|} \frac{{\tilde{d}}s_3\tilde{c}_{4}}{\det(\mathbf{T})}.
\]
Since $\tilde{d}\neq 0$ the direction of bifurcation is determined by the sign of ${\tilde{d}}s_3\tilde{c}_{4}\det(\mathbf{T})$.
Note that {$\tilde{d}$, $s_3$, $\det(\mathbf{T})$ are independent of $\tilde{c}_4$, and} ${\tilde{d}}s_3\tilde{c}_{4}\det(\mathbf{T})=0$ requires $s_{3}=0$ as discussed above, or $\tilde{c}_{4}=0$, which implies vanishing nonlinearity. Thus, in all degenerate cases the branch is vertical and we have proven the following.

\begin{theorem}\label{t:shym}
	Any Hopf bifurcation in \eqref{System1Beregi} yields either a vertical branch of periodic solutions, or is super- or subcritical as in {Proposition \ref{Thm_Gen}}. 
	Using the above notation, the branch is vertical if and only if $\tilde{d}s_{3}\tilde{c}_{4}=0$, where $\tilde{d}=0$ means $u_{3}=v_{3}=0$. The bifurcation is supercritical if ${\tilde{d}}s_3\tilde{c}_{4}\det(\mathbf{T})<0$ and subcritical for positive sign. {In particular, reversing the sign of $\tilde{c}_4$ switches the criticality of the bifurcation.}
\end{theorem}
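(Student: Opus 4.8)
The plan is to assemble into one statement the case distinction already prepared in the preceding discussion, organized by which components of the center eigenvectors vanish. The pivotal quantity is the averaged coefficient $\int_0^{2\pi}\chi_2(\varphi)\,\D\varphi$ from Proposition~\ref{Thm_Gen}: by \eqref{General_Result} a non-vertical branch exists exactly when this integral is non-zero, and, because $r_0\geq0$, its sign then fixes the direction of branching. So the entire argument reduces to identifying when the integral vanishes and, otherwise, computing its sign in terms of $\tilde c_4$, $\det(\mathbf T)$ and the eigenvector data. Throughout I would take for granted that the assumed Hopf configuration (a complex pair $\mu\pm\rmi\omega$ crossing and a non-zero real eigenvalue $\lambda_3$) is realizable, as recorded via Routh--Hurwitz.

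First I would dispose of the degenerate configuration $u_3=v_3=0$, equivalently $\tilde d=0$. In the coordinates \eqref{system_xi} the nonlinearity is then carried solely by $\xi_3\abs{\xi_3}$ with vanishing third component, so the $\xi_3$-equation decouples as $\dot\xi_3=\lambda_3\xi_3$ with $\lambda_3\neq0$; any periodic orbit must satisfy $\xi_3\equiv0$, leaving a purely linear planar block that is a center at $\mu=0$. This produces the vertical branch and explains the factor $\tilde d$ in the vanishing criterion.

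Next I would take $\tilde d\neq0$ after the rotation $\mathbf R_{\tilde\theta}$ and split on $s_3$. If $s_3=0$, the nonlinearity in \eqref{after_rotation} is a pure second order modulus term in $v$ with $a_{11}=0$ by the choice of $\tilde\theta$, hence $\sigma_{_\#}=\sigma_2=0$ and the reduced center dynamics are conservative with the stated convex potential $P$; its level sets give a vertical branch. If $s_3\neq0$, I would pass to the cylindrical coordinates of Theorem~\ref{t_per_orb}, insert the estimate $\tu_\infty=\calO(r_\infty)$ together with the generalized-modulus expansion from the proof of Theorem~\ref{Thm_Gen_Lin_Part}, and thereby reduce $\dot r$ to the planar form covered by Corollary~\ref{hot2D}, with $\chi_2(\varphi)=[[\tilde d\cos(\varphi)]](h_{31}\cos(\varphi)+h_{32}\sin(\varphi))$. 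Only the $\cos^2\varphi\,\abs{\cos\varphi}$ contribution survives integration, and the direct evaluation recorded in the preceding display yields $\int_0^{2\pi}\chi_2(\varphi)\,\D\varphi=\frac{8}{3}\abs{\tilde d}\,\frac{\tilde d s_3\tilde c_4}{\det(\mathbf T)}$.

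Finally I would read off the three conclusions. Since $\mathbf T$ is a change of basis, $\det(\mathbf T)\neq0$, so the integral vanishes precisely when $\tilde d s_3\tilde c_4=0$; together with the two degenerate cases this gives the vertical-branch criterion, with $\tilde d=0\Leftrightarrow u_3=v_3=0$. Multiplying the integral by $(\det\mathbf T)^2>0$ shows its sign equals that of $\tilde d s_3\tilde c_4\det(\mathbf T)$, and since \eqref{General_Result} forces $\mu$ and $-\int_0^{2\pi}\chi_2\,\D\varphi$ to share sign, a negative value gives orbits for $\mu>0$ (supercritical) and a positive value for $\mu<0$ (subcritical); flipping the sign of $\tilde c_4$ flips the integral and hence the criticality. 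I do not anticipate a real obstacle, since all analytic content sits in the earlier reductions; the only delicate point is the sign bookkeeping that links $\int_0^{2\pi}\chi_2\,\D\varphi$, the sign of $\det(\mathbf T)$, and the branching direction, and the one thing worth double-checking is that the $\calO(r^2 r_\infty)$ remainder leaves the leading integral untouched, which is exactly what Corollary~\ref{hot2D} guarantees.
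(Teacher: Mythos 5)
Your proposal is correct and follows essentially the same route as the paper: the same three-way case split ($u_3=v_3=0$ giving a decoupled linear center; $\tilde d\neq0,\ s_3=0$ giving the conservative potential $P$ and a vertical branch; $\tilde d\neq0,\ s_3\neq0$ reduced via $u=r\tu$, the estimate $\tu_\infty=\calO(r_\infty)$, and Corollary~\ref{hot2D} to the integral $\int_0^{2\pi}\chi_2\,\D\varphi=\frac{8}{3}\abs{\tilde d}\,\tilde d s_3\tilde c_4/\det(\mathbf{T})$), with the identical sign bookkeeping for criticality. The only point treated more tersely than in the paper is the sub-case $\tilde c_4=0$ (where the integral vanishes but verticality follows because the nonlinearity is identically zero, not from Proposition~\ref{Thm_Gen}), which the paper also dispatches in one sentence.
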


This conclusion is consistent with the results in \cite{SBeregi}.

\section{Discussion}
In this {paper} we have analyzed Hopf bifurcations in mildly nonsmooth systems with piecewise smooth nonlinearity for which standard center manifold reduction and normal form computations cannot be used. By averaging and a direct approach we have derived explicit analogues of Lyapunov coefficients and have discussed some codimension-one degeneracies as well as the modified scaling laws.
In an upcoming paper we will apply these results to models for controlled ship maneuvering, where stabilization by p-control induces a Hopf bifurcation. 

We believe this is an interesting class of equations from a theoretical as well as applied viewpoint, arising in a variety of models for macroscopic laws that lack smoothness in the nonlinear part. Among the perspectives, there is an analysis of normal forms for coefficients for other bifurcations in these models, such as 
Bogdanov-Takens points. Particularly interesting is the impact on scaling laws, including exponentially small effects 
for smooth vector fields.

\appendix
\section{Appendix}
\subsection{Averaging}
\label{NIT}
\subsubsection*{Near identity transformation}

For completeness, we provide details for the essentially standard normal form transformation used in the proof of Theorem \ref{t_averaging}.  
We set $f(x,\varphi,\epsilon) := 
\frac{m}{\omega} x + \frac{\chi_2}{\omega}  x^2 + \epsilon\left( \frac{\chi_3}{\omega} - \frac{\chi_2\Omega_1}{\omega^2} \right) x^3$, which is the leading part of the right-hand side of \eqref{x_form_for_av} up to a factor of $\epsilon$. We write $f$ as the sum of its mean, $\bar{f}$, and its oscillating part, $\tilde{f}$,
\begin{align*}
	\bar{f}(x) &= \frac{1}{2\pi}\int_0^{2\pi} f(x,\varphi,0)\mathrm{d}\varphi =  \frac{m}{\omega}x +\frac{2}{3\pi\omega}\sigma_{_\#}x^2,\\
	\tilde{f}(x,\varphi,\epsilon) &= f(x,\varphi,\epsilon)-\bar{f}(x) 
	= \left( \frac{\chi_2}{\omega} - \frac{2}{3\pi\omega}\sigma_{_\#} \right)x^2+\epsilon\left( \frac{\chi_3}{\omega} - \frac{\chi_2\Omega_1}{\omega^2} \right)x^3.
\end{align*}
Hence, $f(x,\varphi,\epsilon) = \bar{f}(x) + \tilde{f}(x,\varphi,\epsilon)$ and \eqref{x_form_for_av} reads
\begin{equation}\label{e:fsplit}
	x' = \epsilon \big(\bar{f}(x)+\tilde{f}(x,\varphi,\epsilon)\big) + \epsilon^2\mathcal{O}\left( \epsilon x^4+\abs{m}x^2 \right).
\end{equation}
The near-identity transformation which we will use has smooth coefficients $w_1, w_2$ and is of the form $$ x = y + \epsilon w_1(y,\varphi,\epsilon) + \epsilon^2 w_2(y,\varphi,\epsilon).$$
Differentiating this equation with respect to $\varphi$, using the right-hand side of \eqref{e:fsplit} for $x'$ and rearranging terms gives
$$ {y}' = \epsilon \big(\bar{f}(x)+\tilde{f}(x,\varphi,\epsilon)\big) + \epsilon^2\mathcal{O}\left( \epsilon x^4+\abs{m}x^2 \right) -\epsilon\left( \frac{\partial w_1}{\partial\varphi}+D_yw_1 y' \right) -\epsilon^2\left( \frac{\partial w_2}{\partial\varphi}+D_yw_2 y' \right). $$
Further rearrangements yield
$$ (1+\epsilon D_yw_1+\epsilon^2D_yw_2) {y}' = \epsilon \big(\bar{f}(x)+\tilde{f}(x,\varphi,\epsilon)\big) + \epsilon^2\mathcal{O}\left( \epsilon x^4+\abs{m}x^2 \right) -\epsilon\frac{\partial w_1}{\partial\varphi} -\epsilon^2\frac{\partial w_2}{\partial\varphi},$$
and inverting the first factor on the left and expanding in terms of $\epsilon$ we obtain 
\begin{equation}\label{e:y}
	\begin{aligned}
		{y}' = &\Big( 1-\epsilon D_yw_1+\mathcal{O}(\epsilon^2) \Big) \epsilon \left( \bar{f}(x)+\tilde{f}(x,\varphi,\epsilon) -\frac{\partial w_1}{\partial\varphi} -\epsilon\frac{\partial w_2}{\partial\varphi} \right)\\ 
		&+ \epsilon^2\mathcal{O}\left( \epsilon x^4+\abs{m}x^2 \right).
	\end{aligned}
\end{equation}
Concerning $f$ and the error term, directly using the transformation to $y$ we expand 
\begin{align*}
	\bar{f}(x) &= 
	\bar{f}(y)+D_y\bar{f}(y)\epsilon w_1+\mathcal{O}(\epsilon^2),\\ 
	\tilde{f}(x,\varphi,\epsilon) &= 
	\tilde{f}(y,\varphi,0) + D_y\tilde{f}(y,\varphi,0)\epsilon w_1 + \frac{\partial\tilde{f}}{\partial\epsilon}(y,\varphi,0)\epsilon +\mathcal{O}(\epsilon^2),
\end{align*}
and note that $\epsilon^2\mathcal{O}\left( \epsilon x^4+\abs{m}x^2 \right) =\mathcal{O}\left(\epsilon^3 + \epsilon^2\abs{m}y^2 \right)$. 
Substituting into \eqref{e:y} gives
\begin{align*}
	{y}' =& \Big( 1-\epsilon D_yw_1 \Big) \epsilon \left( \bar{f}(y + \epsilon w_1 + \epsilon^2 w_2)+\tilde{f}(y + \epsilon w_1 + \epsilon^2 w_2,\varphi,\epsilon) -\frac{\partial w_1}{\partial\varphi} -\epsilon\frac{\partial w_2}{\partial\varphi} \right) \\
	&+ \mathcal{O}\left(\epsilon^3 + \epsilon^3 y^4 + \epsilon^2\abs{m}y^2 \right)\\
	=& \epsilon\left( \bar{f}(y)+\tilde{f}(y,\varphi,0)-\frac{\partial w_1}{\partial\varphi} \right) 
	+ \epsilon^2\left( D_y f(y,\varepsilon,0)w_1(y,\varepsilon,0)+ \frac{\partial\tilde{f}}{\partial\epsilon}(y,\varphi,0)-\frac{\partial w_2}{\partial\varphi} \right) \\
	& -\epsilon^2 D_yw_1(y,\varphi,0)\left( \bar{f}(y)+\tilde{f}(y,\varphi,0)-\frac{\partial w_1}{\partial\varphi} \right) + \mathcal{O}(\epsilon^3).
\end{align*}
In order to cancel $\varphi$-dependent terms and obtain \eqref{truncated_av}, we choose periodic $w_1(y,\varphi,\epsilon), w_2(y,\varphi,\epsilon)$ as follows. Firstly, since 
$\int_0^{2\pi}\tilde{f}(y,\varphi,0)\D\varphi = \left( \frac{1}{\omega}\int_0^{2\pi}\chi_2(\varphi)\D\varphi - \frac{4}{3\omega}\sigma_{_\#} \right)y^2 = 0$,
we define $w_1$ via 
\[
\frac{\partial w_1}{\partial\varphi} = \tilde{f}(y,\varphi,0).
\] 
Secondly, for $w_2$ we take the ansatz 
\begin{equation}\label{w2}
	\frac{\partial w_2}{\partial\varphi} = D_yf(y,\varphi,0)w_1(y,\varphi,0)+\frac{\partial\tilde{f}}{\partial\epsilon}(y,\varphi,0)-D_yw_1(y,\varphi,0)\bar{f}(y)-\bar{f}_2(y),
\end{equation}
where we determine the function $\bar{f}_2(y)$ such that $\int_0^{2\pi}\frac{\partial w_2}{\partial\varphi}\D\varphi=0$ holds. To simplify the presentation of the computations of $\bar{f}_2(y)$, we define
\[
A(s) := \frac{\chi_2(s)}{\omega}-\frac{2\sigma_{_\#}}{3\pi\omega},
\]
and write then the expressions from the right-hand side of \eqref{w2} as
\begin{align*}
	D_yf(y,\varphi,0)w_1(y,\varphi,0) &= \frac{m}{\omega}\int_{0}^{\varphi}A(s)\dd s\, y^2 + \frac{2\chi_2(\varphi)}{\omega}\int_0^\varphi A(s)\dd s\, y^3, \\[1pt]
	\frac{\partial\tilde{f}}{\partial\epsilon}(y,\varphi,0) &= \left(\frac{\chi_3(\varphi)}{\omega}-\frac{\chi_2(\varphi)\Omega_1(\varphi)}{\omega^2}\right) y^3, \\[1pt]
	D_yw_1(y,\varphi,0)\bar{f}(y) &= \frac{2m}{\omega}\int_{0}^{\varphi}A(s)\dd s\, y^2 + \frac{4\sigma_{_\#}}{3\pi\omega}\int_{0}^{\varphi}A(s)\dd s\, y^3.
\end{align*}
Next, arranging the coefficients for $y^2$ and $y^3$, the integral of $\bar{f}_2(y)$ becomes
\begin{align*}
	\int_0^{2\pi}\bar{f}_2(y)\dd\varphi =& \int_0^{2\pi}\left( D_yf(y,\varphi,0)w_1(y,\varphi,0) +  \frac{\partial\tilde{f}}{\partial\epsilon}(y,\varphi,0) 
	-D_yw_1(y,\varphi,0)\bar{f}(y)
	\right)\dd\varphi \\ 
	=& -\frac{m}{\omega}\int_{0}^{2\pi}\int_0^\varphi A(s)\dd s\,\D\varphi\, y^2 \\
	&+ \Bigg[ \frac{2}{\omega}\int_{0}^{2\pi}\chi_2(\varphi)\int_{0}^{\varphi}A(s)\dd s\,\D\varphi 
	+ \frac{1}{\omega}\int_{0}^{2\pi} \chi_3(\varphi)\D\varphi \\
	&- \frac{1}{\omega^2}\int_{0}^{2\pi} \chi_2(\varphi)\Omega_1(\varphi) \D\varphi 
	- \frac{4\sigma_{_\#}}{3\pi\omega}\int_{0}^{2\pi}\int_0^\varphi A(s)\dd s\,\D\varphi 
	\Bigg] y^3,
\end{align*}
where the expression with $m$ gives a term of order $\calO(\abs{m}y^2)$. Substituting $A(s)$ and computing the corresponding integrals yield
\begin{equation*}
	\int_0^{2\pi}\bar{f}_2(y)\dd\varphi = \left(\frac{\pi}{4\omega}S_c + \frac{\pi}{4\omega^2}S_q + \frac{1}{\omega^2}\sigma_2 
	\right)y^3 + \calO\left(\sigma_{_\#}y^3+\abs{m}y^2\right),
\end{equation*}
which gives $\bar{f}_2(y)$ in \eqref{averaging_integrals}.

\subsubsection*{Integration of some periodic functions with absolute values}

Here we explain the computation of the integrals which yield the formulas for \eqref{averaging_integrals}. 

We show $\int_0^{2\pi}\chi_2(\varphi)\D\varphi=\frac{4}{3}\sigma_{_\#}$ in order to prove the first equality in \eqref{averaging_integrals}. 
On the one hand, the smooth terms of $\chi_2(\varphi)$ have clearly zero integral over $2\pi$ due to symmetry. On the other hand, all nonsmooth terms have the following feature: $a_{ij}$ and $b_{ij}$ are always multiplied by $c$ and $s$ respectively, times a common symmetric odd or even function, which implies that only one of the coefficients for each pair of $(i,j)$ will be nonzero after integrating over $2\pi$. For instance, $\int_0^{2\pi} c\abs{c}(a_{11}c+b_{11}s) \D\varphi = \frac{8a_{11}}{3}$ and $\int_0^{2\pi} s\abs{c}(a_{21}c+b_{21}s) \D\varphi = \frac{4b_{21}}{3}$. Finally, the factor $2$ for $a_{11}$ and $b_{22}$ is due to the product of purely $c$ or $s$: $\int_0^{2\pi} c^2\abs{c} \D\varphi = \int_0^{2\pi} s^2\abs{s} \D\varphi = 2 \int_0^{2\pi} c^2\abs{s} \D\varphi = 2\int_0^{2\pi} s^2\abs{c} \D\varphi$.

Next, we turn to the second equality in \eqref{averaging_integrals}.
Similarly as before, the integral of $\chi_3(\varphi)$ over $2\pi$ yields $\frac{\pi}{4}S_c$ since the integral of the arising mixed products between $c$ and $s$ vanish. Furthermore, different prefactors compared with $\sigma_{_\#}$ occur since the power of $c$ and $s$ are distinct as well: $\int_0^{2\pi} c^4 \D\varphi = \frac{3}{4} \int_0^{2\pi} c^2 \D\varphi = \frac{3}{4} \int_0^{2\pi} s^2 \D\varphi$. Moreover, $\int_0^{2\pi} \chi_2(\varphi)\Omega_1(\varphi) \D\varphi = -\frac{\pi}{4}S_q + \sigma_2$, where $S_q$ comes from integrating the smooth terms and $\sigma_2$ {from integrating} the others, which give the products between $a_i, b_j$, and $a_{ij}, b_{kl}$, respectively.

\subsection{$3$D system}
\subsubsection{Computation of $\gamma_{ij}$, $\delta_{ij}$ of system \eqref{UE_RE}}
\label{3D_gammas_deltas}
{In this subsection} we present the functions $\gamma_{ij}$, $\delta_{ij}$ of system \eqref{UE_RE}, used for the proof of Theorem \ref{thm3D}.

We first note that \eqref{UP-a} simplifies since $ \Psi_u(0,0,\varphi) = \partial_r\Psi_u(0,0,\varphi) = 0 $, and to ease notation we define
\begin{align*}
	p_1&:= \partial_u\Psi_u(0,0,\varphi) = \frac{c_1}{\omega}, &p_2(\varphi)&:= \frac{1}{2}\partial_u^2\Psi_u(0,0,\varphi) = \frac{c_2\omega-c_1\Omega_0(\varphi)}{\omega^2}, \\
	p_3(\varphi)&:= \frac{1}{2}\partial_r^2\Psi_u(0,0,\varphi) = \frac{\Upsilon(\varphi)}{\omega}, &p_4(\varphi)&:= \partial_{ur}^2\Psi_u(0,0,\varphi) = { \frac{\cos(\varphi) c_3 + \sin(\varphi) c_4}{\omega} }-\frac{c_1\Omega_1(\varphi)}{\omega^2}.
\end{align*}

Similarly, in \eqref{UP-b} we have $\Psi_r(0,0,\varphi) = \partial_u\Psi_r(0,0,\varphi) = \frac{1}{2}\partial_u^2\Psi_r(0,0,\varphi) = 0$, and we define 
\begin{align*}
	k_1&:=\partial_r\Psi_r(0,0,\varphi) = \frac{\mu}{\omega}, \quad
	k_2(\varphi):= \frac{1}{2}\partial_r^2\Psi_r(0,0,\varphi) = \frac{\chi_2(\varphi)\omega-\mu\Omega_1(\varphi)}{\omega^2}, \\ k_3(\varphi)&:= \partial_{ur}^2\Psi_r(0,0,\varphi) = \frac{\chi_1(\varphi)\omega-\mu\Omega_0(\varphi)}{\omega^2}.
\end{align*}

The expressions for $\gamma_{ij}$ and $\delta_{ij}$ follow from solving the ODEs that arise upon substituting (\ref{UE_RE}) into (\ref{UP-a}) and (\ref{UP-b}), and matching the coefficients of the powers of $u_0$ and $r_0$. We start with {the equations and initial conditions to obtain} $\gamma_{01}, \gamma_{10}, \delta_{01}, \delta_{10}$:
\begin{alignat*}{3}
	\gamma_{01}'&=p_1\gamma_{01},\hspace*{0.15cm} \gamma_{01}(0)=0 &&\Rightarrow \gamma_{01}\equiv 0, \hspace*{0.6cm} 
	&\delta_{01}'&=k_1 \delta_{01},\hspace*{0.15cm} \delta_{01}(0)=1 \Rightarrow \delta_{01}(\varphi)=e^{k_1\varphi}, \\
	\gamma_{10}'&=p_1\gamma_{10},\hspace*{0.15cm} \gamma_{10}(0)=1 &&\Rightarrow \gamma_{10}(\varphi)=e^{p_1\varphi}, \hspace*{0.6cm} 
	&\delta_{10}'&=k_1 \delta_{10},\hspace*{0.15cm} \delta_{10}(0)=0 \Rightarrow \delta_{10}\equiv 0.
\end{alignat*}
Using these, we solve {the corresponding equations} for the remaining coefficients as follows:
\begin{align*}
	\gamma_{20}'=p_1\gamma_{20}+p_2(\varphi)\gamma_{10}^2,\,\,\, \gamma_{20}(0)=0 &\Rightarrow \gamma_{20}=  \int_0^\varphi e^{p_1(\varphi+s)}p_2(s)\D s, \\
	\gamma_{02}'=p_1\gamma_{02}+p_3(\varphi)\delta_{01}^2,\,\,\, \gamma_{02}(0)=0 &\Rightarrow \gamma_{02}(\varphi)= \int_0^\varphi e^{p_1(\varphi-s)+2k_1s}p_3(s)\D s, \\
	\gamma_{11}'=p_1\gamma_{11}+p_4(\varphi)\gamma_{10}\delta_{01},\,\,\, \gamma_{11}(0)=0 &\Rightarrow \gamma_{11}(\varphi)= \int_0^\varphi e^{p_1\varphi+k_1s}p_4(s)\D s,
\end{align*}
\begin{align*}\delta_{20}'=k_1\delta_{20},\,\,\, \delta_{20}(0)=0 &\Rightarrow \delta_{20}\equiv 0, \\
	\delta_{02}'=k_1 \delta_{02}+k_2(\varphi)\delta_{01}^2 ,\,\,\, \delta_{02}(0)=0 &\Rightarrow \delta_{02}(\varphi)= \int_0^\varphi e^{k_1(\varphi+s)}k_2(s)\D s, \\
	\delta_{11}'=k_1\delta_{11}+k_3(\varphi)\gamma_{10}\delta_{01},\,\,\, \delta_{11}(0)=0 &\Rightarrow \delta_{11}(\varphi)= \int_0^\varphi e^{k_1\varphi+p_1s}k_3(s)\D s.
\end{align*}
Since we aim to solve the boundary value problem $0 = u(2\pi) - u(0)$, $0 = r(2\pi) - r(0)$, let $\ogamma_{ij}:=\gamma_{ij}(2\pi)-\gamma_{ij}(0)$, $\odelta_{ij}:=\delta_{ij}(2\pi)-\delta_{ij}(0)$, $\forall i,j\geq 0$. Direct computation of $\ogamma_{10}$ and $\ogamma_{20}$ gives the expressions in \eqref{ogammas_odeltas}. For the other functions, we consider the corresponding integrals and Taylor expand in $\mu=0$, which results in $\ogamma_{11}$, $\odelta_{01}$, $\odelta_{02}$ and $\odelta_{11}$ shown in \eqref{ogammas_odeltas}. For illustration of details omitted, we next present the full derivation for the explicit form of $\ogamma_{02}$. 
Taylor expansion of $e^{s\frac{2\mu}{\omega}}$ in $\mu=0$ 
and rearranging the terms in the integral of $\gamma_{02}$ gives
\begin{equation}\label{ogamma_02}
	\ogamma_{02} = \frac{1}{\omega}e^{\frac{2\pi c_1}{\omega}} \int_0^{2\pi}e^{-s\frac{c_1}{\omega}}\Upsilon(s) \D s + \frac{2\mu}{\omega^2}e^{\frac{2\pi c_1}{\omega}} \int_0^{2\pi} s e^{-s\frac{c_1}{\omega}}\Upsilon(s) \D s + \calO\left(\mu^2\right).
\end{equation}
We compute the two integrals in \eqref{ogamma_02} separately. The first one readily expands {in $c_1=0$} as 
\begin{align*}
	\frac{1}{\omega}&e^{\frac{2\pi c_1}{\omega}} \int_0^{2\pi}e^{-s\frac{c_1}{\omega}}\Upsilon(s) \D s = \frac{\omega}{c_1(c_1^2+4\omega^2)} \Bigg[ 2\left(e^{\frac{3\pi}{2\omega}c_1}-e^{\frac{\pi}{2\omega}c_1}\right)(c_1h_{21}-2h_{11}\omega)\\ &+ \left(e^{\frac{2\pi}{\omega}c_1}-2e^{\frac{\pi}{\omega}c_1}+1\right)(c_1h_{12}+2h_{22}\omega) + \left(e^{\frac{2\pi}{\omega}c_1}-1\right)\left(c_1h_{21}+c_1c_5+\frac{c_1^2h_{11}}{\omega}+2h_{11}\omega\right) \Bigg]\\
	&= \frac{1}{c_1^2+4\omega^2} \Bigg[ {2\pi}(c_1h_{21}-2h_{11}\omega) 
	+ 2\pi\left(c_1h_{21}+c_1c_5+\frac{c_1^2h_{11}}{\omega}+2h_{11}\omega\right) \Bigg] + \calO\left(c_1^2\right).
\end{align*}
In particular, it vanishes for $c_1=0$.

For the second integral of \eqref{ogamma_02} we proceed similarly. Its explicit expression initially reads
\begin{align*}
	\frac{2\mu}{\omega^2}e^{\frac{2\pi c_1}{\omega}} \int_0^{2\pi} s e^{-s\frac{c_1}{\omega}}\Upsilon(s) \D s &= -\frac{4\mu}{\omega c_1^2(c_1^2+4\omega^2)^2} \Bigg[ \omega^2c_1^3(6h_{11}\pi-h_{12}) +\omega c_1^4\pi(c_5+h_{21}) + c_1^5h_{11}\pi \\
	&+ \frac{\omega}{2}\Big\{ (-16h_{11}\omega^4-12c_1^2h_{11}\omega^2+4c_1^3h_{21}\omega)\left(e^{\frac{\pi}{2\omega}c_1}-e^{\frac{3\pi}{2\omega}c_1}\right)\\
	&+ \pi(-8c_1h_{11}\omega^3+4c_1^2h_{21}\omega^3+c_1^4h_{21}-2h_{11}c_1^3\omega)\left(3e^{\frac{\pi}{2\omega}c_1}-e^{\frac{3\pi}{2\omega}c_1}\right) \Big\} \\
	&+ \omega\Bigg\{ (3c_1^2h_{22}\omega^2+c_1^3h_{12}\omega+4h_{22}\omega^4)\left(-e^{\frac{2\pi}{\omega}c_1}+2e^{\frac{\pi}{\omega}c_1}\right)\\
	&+ \left(\omega^44h_{11}+\omega^2 c_1^2h_{11}+\omega c_1^3(h_{21}+c_5)+\frac{1}{2}c_1^4h_{11}\right)\left(-e^{\frac{2\pi}{\omega}c_1}+1\right) \Bigg\} \\
	&+ (\omega^44\pi c_1h_{22}+\omega^34\pi c_1^2h_{12}+\omega^22\pi c_1^3h_{22}+\omega\pi c_1^4h_{12})\left(e^{\frac{\pi}{\omega}c_1}-1\right) \\
	&- 4\omega^5h_{22}+\omega^48c_1 h_{11}\pi + \omega^3c_1^2( 4\pi(c_5+h_{21}c_1)-3h_{22} )\Bigg].
\end{align*}
Expanding again the exponential functions in $c_1=0$ and simplifying coefficients, at $c_1=0$ we obtain
\begin{equation*}
	\frac{2\mu}{\omega^2}e^{\frac{2\pi c_1}{\omega}} \int_0^{2\pi} s e^{-s\frac{c_1}{\omega}}\Upsilon(s) \D s = -\mu\frac{\pi}{\omega^2}\big( 2h_{21} + c_5 + \pi h_{22} \big).
\end{equation*}

\subsubsection{Computation of $\delta_{03}$ of system \eqref{UE_RE_Second}}
\label{3D_gammas_deltas_second}
Similar to the previous subsection, we present the function $\delta_{03}$ of system \eqref{UE_RE_Second}, used in the proof of Corollary \ref{Second:c:3D}.

Taylor {expanding the right-hand sides of \eqref{eq_u_r}} in $(u,r)=(0,0)$ up to fourth order gives the following; we omit the dependence of $\Psi_u$ and $\Psi_r$ on  $(u,r,\varphi)$ at $(0,0,\varphi)$  to simplify the notation: 
\begin{subequations}
	\begin{align} 
		\begin{split} \label{UP-a3}
			u' =& \Psi_u + \partial_u\Psi_uu + \partial_r\Psi_ur + \frac{1}{2}\partial_u^2\Psi_uu^2 + \frac{1}{2}\partial_r^2\Psi_ur^2 + \partial_{ur}^2\Psi_uur \\
			&+ \frac{1}{3!}\partial_u^3\Psi_uu^3 + \frac{1}{3!}\partial_r^3\Psi_ur^3 + \frac{1}{2}\partial_{u^2r}^3\Psi_uu^2r + \frac{1}{2}\partial_{ur^2}^3\Psi_uur^2 + \mathcal{O}\left(4\right),
		\end{split}\\[5pt]
		\begin{split} \label{UP-b3}
			r' =&  \Psi_r + \partial_u\Psi_ru + \partial_r\Psi_rr + \frac{1}{2}\partial_u^2\Psi_ru^2 + \frac{1}{2}\partial_r^2\Psi_rr^2 + \partial_{ur}^2\Psi_rur  \\
			&+ \frac{1}{3!}\partial_u^3\Psi_ru^3 + \frac{1}{3!}\partial_r^3\Psi_rr^3 + \frac{1}{2}\partial_{u^2r}^3\Psi_ru^2r + \frac{1}{2}\partial_{ur^2}^3\Psi_rur^2 + \mathcal{O}\left(4\right),
		\end{split}
	\end{align}
\end{subequations}
where $\frac{1}{3!} \partial_u^3\Psi_r = 0$. We also set  
$k_4(\varphi):= \frac{1}{3!} \partial_r^3\Psi_r = \frac{-\omega\chi_2(\varphi)\Omega_1(\varphi)+\mu\Omega_1(\varphi)^2}{\omega^3}.$

\medskip
Substituting (\ref{UE_RE_Second}) into (\ref{UP-a3}) and (\ref{UP-b3}) and matching the coefficients of the powers of $u_0$ and $r_0$ we get to solve a set of ODEs in order to obtain the expressions for $\gamma_{ij}$, $\delta_{ij}$ for $i,j$ such that $i+j=3$, which are rather lengthy. We show $\delta_{03}$, which is the only one required for the leading order analysis in the degenerate case $\sigma_{_\#}=0$, $c_1\neq 0$ in Corollary~\ref{Second:c:3D}:
\begin{equation}\label{delta_03}
	\begin{aligned}
		\delta_{03}'&=k_4\delta_{01}^3+2k_2\delta_{01}\delta_{02}+k_3\delta_{01}\gamma_{02}+k_1\delta_{03},\,\,\, \delta_{03}(0)=0 \Rightarrow \\
		\delta_{03}(\varphi) &= e^{k_1\varphi}\int_0^\varphi e^{-k_1s}\delta_{01}(s)\big[2k_2(s)\delta_{02}(s)+k_3(s)\gamma_{02}(s)+k_4(s)\delta_{01}(s)^2\big] \D s.
	\end{aligned}
\end{equation}

\section*{Acknowledgments}
The authors are grateful to Alan Champneys (University of Bristol), Ivan Ovsyannikov (University of Hamburg) and Martin Rasmussen (Imperial College London) for fruitful discussions. This research has been supported by the Deutsche Forschungsgemeinschaft (DFG, German Research Foundation) within the framework of RTG ``$\pi^3$: Parameter Identification - Analysis, Algorithms, Applications'' - Projektnummer 281474342/GRK2224/1.

\bibliographystyle{siam}
{\small \bibliography{arxiv_stein_macher}}

\end{document}